\documentclass[11pt,notitlepage,twoside]{article}
\usepackage{latexsym}
\usepackage{amsmath}
\usepackage{amsfonts}
\usepackage{amssymb}
\usepackage{amscd}
\usepackage{color}
\renewcommand{\a }{\alpha }
\renewcommand{\b }{\beta }
\renewcommand{\d}{\delta }
\newcommand{\D }{\Delta }

\newcommand{\e }{\varepsilon }
\newcommand{\g }{\gamma}

\renewcommand{\l }{\lambda }
\renewcommand{\L }{\Lambda }

\newcommand{\n }{\nabla }

\renewcommand{\o }{\omega }

\renewcommand{\O }{\Omega }

\newcommand{\ov}{\overline}
\newcommand{\intbar}{\mathop{\int\makebox(-13.5,0){\rule[4pt]{.7em}{0.3pt}}%
\kern-6pt}\nolimits}
\newcommand{\wtilde }{\widetilde}

\newcommand{\be}{\begin{equation}}
\newcommand{\ee}{\end{equation}}
\newcommand{\bes}{\begin{equation*}}
\newcommand{\ees}{\end{equation*}}
\newcommand{\ba}{\begin{eqnarray}}
\newcommand{\ea}{\end{eqnarray}}
\newcommand{\bas}{\begin{eqnarray*}}
\newcommand{\eas}{\end{eqnarray*}}
\newenvironment{pf}{\noindent{\sc Proof}.\enspace}{\rule{2mm}{2mm}\medskip}
\newenvironment{pfn}{\noindent{\sc \bf Proof }}{\rule{2mm}{2mm}\medskip}

\newcommand{\R}{\mathbb{R}}

\newcommand{\N}{\mathbb{N}}
\renewcommand{\o }{\omega }

\author{ Rufaidah Alharbi\thanks{E-mail : \texttt{451214282@qu.edu.sa}} \, Mohamed Ben Ayed\thanks{E-mail : \texttt{M.BenAyed@qu.edu.sa}} and
Khalil El Mehdi\thanks{Corresponding author. E-mail : \texttt{K.Jiyid@qu.edu.sa} }  
   \\ \\
{\footnotesize
 Department of Mathematics, College of Science, Qassim University, Buraydah, Saudi Arabia.}
}
\date{}

\title{\bf  Blowing-up Solutions with Residual Mass in a Slightly Subcritical Dirichlet Problem
}
\begin{document}

\newtheorem{lem}{Lemma}[section]
\newtheorem{pro}[lem]{Proposition}
\newtheorem{thm}[lem]{Theorem}
\newtheorem{rem}[lem]{Remark}
\newtheorem{cor}[lem]{Corollary}
\newtheorem{df}[lem]{Definition}

\maketitle

\noindent{\bf Abstract:} In this paper, we study  the Dirichlet elliptic problem $(\mathcal{P}_\varepsilon)$:  $-\Delta u +V\,u = u^{p-\varepsilon}$, $u>0$ in $\Omega$, $u=0$ on $\partial\Omega$, where $\Omega\subset \R^n$ ( $n\geq 3$)  is a bounded  domain, $V$ is a smooth positive function on $\overline{\Omega}$,  $p+1= 2n/(n-2)$ is the critical Sobolev exponent, and $\varepsilon >0$ is a small parameter. 
First, we show that, unlike the case of weak convergence to zero, interior bubbling solutions with a nonzero weak limit cannot occur in low dimensions. We then treat the general setting by removing the restriction that blow-up points are confined to the interior. Using delicate asymptotic expansions of the gradient of the associated functional, we prove that in dimensions $n=4$ and $n=5$, single blow-up point cannot coexist with residual mass.\\
We further elucidate the role of the sign of the normal derivative of the potential $V$ on the boundary: if it is positive, any single blow-up solution with residual mass must occur in the interior; if it is negative at some boundary point, boundary blow-up solutions with residual mass can be constructed. Finally, we construct both simple and non-simple interior blow-up solutions exhibiting residual mass, without any assumption on the sign of the normal derivative of $V$. These results provide new insights into the interaction between the potential, the geometry of the domain, and the critical nonlinearity.
\bigskip

\noindent{\bf Key Words:}  Partial Differential Equations,  blow-up phenomena, Critical Sobolev exponent.

\bigskip

\noindent {\bf MSC  $2020$}: 35A15, 35J20,  35J25.

\section{Introduction}
This paper is concerned with the existence and non-existence of blowing-up solutions with residual mass for the following slightly subcritical problem:
\begin{align}
(\mathcal{P}_{V, \e}):\qquad 
\begin{cases}
-\Delta u +V u = u^{p-\varepsilon}, u>0 \quad &\mbox{in}\quad \Omega,
\\
\quad u =0\,\quad &\mbox{on}\quad \partial\Omega,
\end{cases}
\end{align}
where $\Omega \subset \mathbb{R}^n$ ($n \geq 3$) is a smooth bounded domain, $V \in C^3(\overline{\Omega})$ is positive, $p+1 = \frac{2n}{n-2}$ is the critical Sobolev exponent for the embedding $H^1_0(\Omega) \hookrightarrow L^q(\Omega)$, and $\varepsilon > 0$ is a small parameter.

Problems of the form $(\mathcal{P}_{V,\varepsilon})$ appear in several areas of applied mathematics, for example, in nonlinear optics, where they model laser beam propagation \cite{FM, CGL, 6}, and in population dynamics, where they arise as reaction-diffusion equations describing species migration in shifting habitats \cite{BLZ, GLO}. Furthermore, the problem is closely related to the Yamabe problem in conformal geometry, as discussed in \cite{DH} and the references therein.

A key challenge posed by problem $(\mathcal{P}_{V,\varepsilon})$ is that its solutions develop blow-up phenomena in the limit $\varepsilon \to 0$. To elucidate this phenomenon,  we note that the problem $(\mathcal{P}_{V,\varepsilon})$ has a variational formulation:  its solutions correspond to positive critical points of the associated functional defined by 
\be\label{funct}
 I_{\varepsilon}(u) := \frac{1}{2}  \int_{ \Omega} ( |\n u |^{2}+V\,u^{2}) - \frac{ 1 }{ p+1-\varepsilon}  \int_{ \Omega}  | u |^{p+1-\varepsilon} , 
\ee
defined on  $H^1_0(\Omega)$  equipped with the norm $\|.\|$ and its corresponding inner product given by:
\be\label{F}
\| u \|^2= \int_{\Omega} \left(| \n u|^2 + V u^2\right) \quad \mbox{ and } \quad (u,v)=\int_{\Omega} \left(\n u \cdot \n v + V u \, v\right).
\ee
Sequences converging to a critical point (for instance, minimizing sequences) are bounded. One would then like to extract a convergent subsequence that also converges in $L^{p+1-\varepsilon}(\O)$. When $\varepsilon > 0$, this can be achieved using the compactness of the Sobolev embedding $H^1_0(\O) \hookrightarrow L^{p+1-\varepsilon}(\O)$. However, when $\varepsilon = 0$, compactness is lost and the embedding is only continuous: convergence in $L^{p+1}(\O)$ may fail to hold. This is both a key feature and a major difficulty in the study of solutions to problem $(\mathcal{P}_{V,\varepsilon})$ as $\varepsilon$ tends to zero.
Blow-up phenomena may occur around certain points in $\Omega$ or on its boundary $\partial\Omega$. To be clearer, let us specify  what is meant by a blow-up point. If $u_\varepsilon$ is a bounded solution to the problem $(\mathcal{P}_{V,\varepsilon})$, we say that it develops a blow-up point at $a$  if it satisfies
$$
  \lim \inf_{ r \to 0} \lim \sup_{ \e \to 0 } \int _{ B(a, r)\, \cap \,\O}  |\n u_\e| ^2 (x) dx  \,\Big(\mbox{or}\,\int _{ B(a, r)\,\cap\,\O }  u_\e ^{2n/(n-2)} (x) dx \Big) >0. 
  $$
In other words, as their name indicates, a fixed part of the energy of $u_\varepsilon$  is concentrated in the neighborhood of these points. Sequences that admit such points cannot converge in $H^1_0(\Omega)$ (or $L^{p+1}(\Omega)$). Indeed, if $(u_\varepsilon)$ converges, it must converge to its weak limit which does not "see" the concentration. In particular, the limit $u_0$ would satisfy $\| u_0\| < \lim\| u_\varepsilon\|$, which contradicts the convergence. This is a motivation for studying them: it helps to clarify what the obstacles to good convergence that can occur.

Notice that, by the concentration compactness principle \cite{L, S}, if $(u_\varepsilon)$ is a bounded sequence of solutions to $(\mathcal{P}_{V,\e})$, then (up to a subsequence) it has to satisfy 
\begin{align}
  &  u_{\varepsilon}= u_0+ \sum_{i=1}^N P\delta_{(a_{i,\e},\lambda_{i,\e})}+v_{\e}  \qquad \mbox{ with } \quad \Vert v_{\e}\Vert \longrightarrow 0   \quad as  \quad \e \longrightarrow 0,  \label{KR1}  \\ 
  &\lambda_{i,\e}>0,\,a_{i,\e}\in\O,\,  \lambda_{i,\e}d(a_{i,\e}, \partial\O) \longrightarrow \infty , \, a_{i,\e}\longrightarrow  a_i \in \ov{\O} ,\,  \forall i \in \lbrace 1,\cdots,N\rbrace , \label{KR2}\\
 & \e_{ij} :=\left(\frac{\l_{i,\e}}{\l_{j,\e}} + \frac{\l_{j,\e}}{\l_{i,\e}}+ \l_{i,\e}\l_{j,\e}|a_{i,\e}-a_{j,\e}|^2\right)^{(2-n)/2} \longrightarrow 0 \quad \forall i\neq j  , \label{KR3}
  \end{align}
$u_0$ is either a solution to $(\mathcal{P}_{V,0})$ or is identically zero, $P\delta_{(a_{i,\e},\lambda_{i,\e})}:=P\delta_i$ is the standard projection onto $H^1_0(\O)$ of the function 
 \begin{equation}\label{delta}
   \delta_{(a_{i,\e},\lambda_{i,\e})} (x) = c_0\frac{\lambda_{i,\e}^{(n-2)/{2}}}{(1+\lambda_{i,\e}^2|
 x-a_{i,\e}|^2)^{(n-2)/{2}}}:=\d_i(x), \quad x \in \R^n,\, c_0=(n(n-2))^{\frac{n-2}{4}},
 \end{equation}
 defined by 
\begin{equation}\label{KR5} 
\Delta P\delta_i= \Delta \delta_i \quad\mbox{in}\quad\Omega,\quad P\delta_i=0\quad\mbox{on}\quad \partial\Omega .
\end{equation}
The functions $\d_i$ are the only solutions to  problem \cite{CGS}
$$
 -\D \o = \o ^\frac{n+2}{n-2} ,\qquad \o > 0\,\quad \mbox{ in } \quad \R^n.
$$
In the critical case where $\varepsilon = 0$, it is well known that the existence of solutions to problem $(\mathcal{P}_{V,0})$ depends on the geometry of the domain, the properties of the potential $V$, and the dimension $n$. For instance, when $V$ is constant and the domain is star-shaped, the problem admits no solutions. In view of the extensive literature on the subject, we simply refer to the pioneering works of Brezis–Nirenberg \cite{BN} and Bahri–Coron \cite{BC}.

For $\varepsilon > 0$, two types of questions have been addressed in the study of problem $(\mathcal{P}_{V,\varepsilon})$: the first describes the blow-up behavior of its solutions, while the second constructs such solutions for small values of the parameter $\varepsilon$. For $V \equiv 0$, Schoen \cite{Sch} showed  an alternative: in the decomposition \eqref{KR1} either $u_0 = 0$ or $N = 0$. More precisely, he  proved that either $(u_\varepsilon)$ blows up and converges weakly to $0$, in which case the blow-up points lie in $\Omega$, or it converges strongly to a  solution of $(\mathcal{P}_{0,0})$. Still in the case $V \equiv 0$, Atkinson-Peletier \cite{AP}, Brezis- Peletier \cite{BP3}, Rey \cite{R29}, Han \cite{H15}, and Hebey \cite{16}  studied the asymptotic behavior of single blow-up solutions to $(\mathcal{P}_{0,\varepsilon})$ (that is, $N=1$ in the decomposition \eqref{KR1}), whereas Rey \cite{R31}, Bahri-Li-Rey \cite{BLR}, and Rey \cite{R32} considered the case of multiple blow-up solutions. In the same setting, the authors in \cite{R31}, \cite{BLR}, \cite{R32} and \cite{CPS} constructed blow-up solutions   to $(\mathcal{P}_{0,\varepsilon})$ (see the survey \cite{P2013} for a complete review). For $V \not\equiv 0$ and $n = 3$, the asymptotic behavior of single blow-up solutions  to $(\mathcal{P}_{V,\varepsilon})$ was studied by \cite{FKK}. More recently, in \cite{JMAA}, the authors provided an exhaustive description of the asymptotic behavior of multiple interior blow-up solutions for dimensions $n \geq 4$. For the inverse problem, the authors of \cite{2ABE} and \cite{OM} constructed interior blow-up solutions to $(\mathcal{P}_{V,\varepsilon})$  that converge weakly to zero. Thus, it is natural to ask whether it is possible to construct blowing-up solutions of the problem $(\mathcal{P}_{V,\varepsilon})$ with residual mass, that is, solutions possessing a non-zero weak limit. In a recent paper \cite{KK}, the authors considered the analogous problem with Neumann boundary conditions and proved the non-existence of interior blowing-up solutions with residual mass in low dimensions ($3\leq n\leq 5$), whereas they constructed interior blowing-up solutions with residual mass in higher dimensions ($n\geq 7$). We will prove that these results also hold for our problem (see Theorems \ref{t0}, \ref{t4}, \ref{t5} and \ref{t6}). Although the underlying structure remains similar, the Dirichlet boundary condition in the present paper necessitates the introduction of the bubble projection, which adds further complexity. In particular, the error term between the bubble and its projection becomes more intricate, requiring refined estimates and a more delicate treatment of error terms than in [\cite{KK}. Consequently, our situation demands more technical computations. Moreover, in this paper we study the general case by removing the assumption that blow-up points are confined to the interior of the domain, a restriction imposed in \cite{KK} for the Neumann problem. 
The main difficulty lies in showing that the blow-up points necessarily belong to the interior of the domain. By performing delicate asymptotic expansions of the gradient (see the commentary  after Theorem \ref{t0}), we successfully resolved this question for single blowing-up solutions with residual mass in the cases $n=4$ and $n=5$. Indeed, in Theorem \ref{t1} we prove the non-coexistence of blow-up dynamics and residual mass in that dimensions. We believe the arguments developed here should also address the case of multiple blow-up points with residual mass, and we plan to return to this intricate question in the future. We further clarify the role of the sign of the normal derivative of $V$ on the boundary. Specifically, we prove that when the normal derivative of $V$ is positive on the boundary, all single blowing-up solutions with residual mass have to blow-up in the interior (see Theorem \ref{t2}). Conversely, when the normal derivative is negative at some boundary point, we are able to construct solutions with residual mass  with boundary blow-up points (see Theorem \ref{t3}).
These results show that the role played by the potential $V$ in these equations is reminiscent of that played by the functions used to prescribe scalar curvature on Riemannian manifolds with boundary. As far as we know, our results are the first to demonstrate how the sign of the normal derivative of the potential $V$ on the boundary influences these equations, revealing a direct analogy with the prescribed scalar curvature problem on Riemannian manifolds with boundary.

The remainder of the paper is structured  as follows. Section $2$ states our main results. Section $3$ analyzes the infinite-dimensional component of the solutions. Section $4$ provides precise estimates of the gradient of $I_\e$. In Section $5$, we construct both simple and non-simple interior blow-up solutions  with residual mass (Theorems \ref{t4}-\ref{t6} below). Section $6$ deals with the impossibility of coexistence between blow-up points and residual mass in low dimensions (Theorems \ref{t0} and \ref{t1} below). Section $7$ shows how the sign of $\frac{\partial V}{\partial\nu}$ on the boundary determines the location of blow-up points for single blow-up solutions with residual mass (Theorems \ref{t2} and \ref{t3} below). Section $8$ concludes the paper and outlines potential directions for future research. Finally, we collect some useful estimates in the appendix (Section $9$).

  \section{ Main Results}
   We start by ruling out the existence of interior blowing-up solutions with residual mass for small dimensions. 
\begin{thm}\label{t0} 
Let $3\leq n\leq 5$, $\Omega$ be a smooth bounded domain in $\R^n$   and $ u_0 $ be a non-degenerate solution of  $(\mathcal{P}_{V,0})$ (that is, for $\e=0$). Then, problem $(\mathcal{P}_{V,\varepsilon})$ has no   solutions $ u_\e $ such that  
\begin{align*}
  &  u_{\varepsilon}= u_0+ \sum_{i=1}^N P\delta_{(a_{i,\e},\lambda_{i,\e})}+v_{\e}  \qquad \mbox{ with } \quad \Vert v_{\e}\Vert \longrightarrow 0   \quad as  \quad \e \longrightarrow 0,   \\ 
  &a_{i,\e}\in\O,\,  \lambda_{i,\e} \longrightarrow \infty , \, a_{i,\e}\longrightarrow  a_i \in \O ,\,  \forall i \in \lbrace 1,\cdots,N\rbrace ,  \quad \e_{ij} \to 0  \quad \forall i\neq j . 
  \end{align*} where $ \e_{ij} $ is defined in \eqref{KR3}. 
\end{thm}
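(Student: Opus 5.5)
Suppose, for contradiction, that such solutions $u_\e$ exist. The plan is to test the equation against the derivative of the bubble with respect to its concentration parameter. Near an interior blow-up point $a_i$, the residual mass $u_0(a_i)>0$ produces an interaction term of order $\l_i^{-(n-2)/2}$. In low dimensions this term dominates everything else, and it has a sign that cannot be balanced.

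\emph{Step 1: refine the decomposition.} By the standard minimization argument (Bahri--Coron), we write
\[
u_\e=\a_0(u_0+h_\e)+\sum_{i=1}^N\a_i P\d_{(a_i,\l_i)}+v_\e .
\]
Here $h_\e$ is a small correction in the tangent space of $u_0$, $\a_i\to1$, and $v_\e$ is orthogonal to $u_0$, to $P\d_i$, to $\partial P\d_i/\partial\l_i$ and to $\partial P\d_i/\partial a_i$. The non-degeneracy of $u_0$ is used here: the quadratic form $I_0''(u_0)$ is invertible on $T_{u_0}$, and it is coercive on the orthogonal complement near the bubbles. This yields
\[
\|v_\e\|+\|h_\e\|\le C\Big(\e+\sum_i\l_i^{-(n-2)/2}+\sum_{i\neq j}\e_{ij}^{\,\text{(power)}}+\sum_i \l_i^{-\min(2,\ldots)}\,\text{(potential terms)}\Big),
\]
with the usual logarithmic corrections in dimension $n=6$; that dimension is excluded, so we avoid them. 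Since the $a_i$ are interior, all boundary terms of the form $H(a_i,\cdot)/\l_i^{n-2}$ are $O(\l_i^{-(n-2)})$ and are harmless.

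\emph{Step 2: expand the $\l$-derivative of the functional.} We compute
\[
\Big(\n I_\e(u_\e),\,\l_i\frac{\partial P\d_i}{\partial\l_i}\Big)=0 .
\]
Using the appendix estimates, the expected expansion is
\[
0 = -c_1\frac{u_0(a_i)}{\l_i^{(n-2)/2}}+ c_2\,\e + c_3\frac{V(a_i)}{\l_i^2}(\text{for }n\ge5;\ \text{$\log$ for }n=4) -\sum_{j\neq i}c_4\,\l_i\frac{\partial\e_{ij}}{\partial\l_i}+O(\l_i^{-(n-2)})+\text{h.o.t.}
\]
The signs are arranged as follows. The term $u_0(a_i)$, coming from $\int u_0^{}\d_i^{p-1}\l_i\partial_{\l_i}\d_i$ with $u_0(a_i)>0$, has a definite sign. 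The terms in $\e$ and $V(a_i)>0$ have the opposite sign. For $n\le5$ we have $(n-2)/2<2$, and for $n=3$ also $1/2<1$. Hence the $u_0$ term is of strictly lower order in $1/\l_i$ than the $V$ term, which is $\l_i^{-1}$ for $n=3$, $\log\l_i/\l_i^2$ for $n=4$, and $\l_i^{-2}$ for $n=5$.

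\emph{Step 3: reach a contradiction via the sign.} Choose $i$ with $\l_i$ minimal. For this index, $-\l_i\partial_{\l_i}\e_{ij}\ge -c\,\e_{ij}$ has the favorable sign up to the usual trick: we sum with weights $2^i$ after ordering $\l_1\le\dots\le\l_N$, as in Bahri's argument. The interaction terms then contribute $\sum\e_{ij}$ with a sign matching the $\e$ term, and the $u_0$ term remains isolated. We also need a bound on $\e_{ij}$: using the $a$-derivative together with the $\l$-derivative, $\e_{ij}$ is $o$ of the $u_0$ term. The identity then forces
\[
\frac{u_0(a_i)}{\l_i^{(n-2)/2}}\le C\Big(\e + \frac{1}{\l_i^{2}}(\log\l_i) +\sum\e_{ij}\Big).
\]
The $\e$-term has the wrong sign relative to the $u_0$ term only if both are on the same side, so we check the signs carefully. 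The point is that $\e$ and $V$ push $\l_i$ to decrease, and the $u_0$ interaction pushes in the same direction. Consequently all leading terms have the same sign and cannot sum to zero, which contradicts $\l_i\to\infty$.

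\emph{Main obstacle.} Verifying exactly these signs is the crux of the argument. We must check that $\e\int\d_i^{p}\log\d_i\,\l_i\partial_{\l_i}\d_i$, the $V$ term, and the $u_0$ term all carry the same sign. Equivalently, in the Neumann analogue of \cite{KK}, each of these corrections to the Pohozaev-type balance is positive. We must also show that $v_\e$, $h_\e$ and the projection error $\varphi_i=\d_i-P\d_i$ contribute only $o(\l_i^{-(n-2)/2})$. The bound on $\|v_\e\|^2$ is fine; the delicate part is the cross term $\int u_0\,\d_i^{p-1}v_\e$, which requires the orthogonality conditions and the estimate $\|v_\e\|=O(\l_i^{-(n-2)/2})$. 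That estimate gives only $O(\l_i^{-(n-2)})$, provided we exploit that $u_0$ is smooth, so that $u_0-u_0(a_i)$ gains a factor $|x-a_i|$.
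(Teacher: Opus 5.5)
Your skeleton matches the paper's proof. You test against $\lambda_i\partial P\delta_i/\partial\lambda_i$, order $\lambda_1\le\dots\le\lambda_N$, multiply by $2^{i-1}$ and sum so that \eqref{N11} turns the interaction terms into $+c\sum\varepsilon_{ij}$, and conclude because every surviving leading term has the same sign. The gap is that this sign is never established, although it is the whole content of the contradiction, and your write-up contradicts itself on it. In Step 2 you write the residual-mass term as $-c_1u_0(a_i)\lambda_i^{-(n-2)/2}$, opposite to $+c_2\varepsilon$. With that sign there is no contradiction: the identity balances with $\varepsilon\sim\lambda_i^{-(n-2)/2}$. This is exactly the mechanism, with the $V$-term in place of the $u_0$-term, by which solutions are built in Theorems \ref{t4}--\ref{t6}. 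For the same reason, the inequality you display in Step 3 is not contradictory. Step 3 then asserts the opposite sign (``pushes in the same direction'') without any computation, and you concede that checking it is ``the crux''. As written, the argument does not close.

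The missing verification is short. In $\langle I_\varepsilon'(u),\lambda_i\partial_{\lambda_i}P\delta_i\rangle$ the $u_0$-contribution is $-p\,\alpha_0\int\delta_i^{p-1}u_0\,\lambda_i\partial_{\lambda_i}\delta_i\approx-\alpha_0u_0(a_i)\,\lambda_i\partial_{\lambda_i}\int_{\mathbb{R}^n}\delta_i^p$. Since $\int_{\mathbb{R}^n}\delta_i^p=\bar c_2\lambda_i^{-(n-2)/2}$, this equals $+\frac{n-2}{2}\bar c_2\alpha_0u_0(a_i)\lambda_i^{-(n-2)/2}$, which is positive because $a_i$ is interior and so $u_0(a_i)\ge c>0$. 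Likewise $-\int\delta_i^{p-\varepsilon}\lambda_i\partial_{\lambda_i}\delta_i=-\frac{1}{p+1-\varepsilon}\lambda_i\partial_{\lambda_i}\int\delta_i^{p+1-\varepsilon}>0$, because $\int_{\mathbb{R}^n}\delta_i^{p+1-\varepsilon}$ is a positive constant times $\lambda_i^{-\varepsilon(n-2)/2}$; this is the term $+\bar c_3\varepsilon$. The interaction enters as $-c_0\bar c_2\sum_{j\neq i}\lambda_i\partial_{\lambda_i}\varepsilon_{ij}$, the coefficient $1-2$ coming from the linear and the two nonlinear contributions, which is what \eqref{N11} needs. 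These are exactly the signs in Proposition \ref{rp43}. Together with $H$-terms of order $O(\lambda_i^{2-n})$ at interior points and $o(\cdot)$ contributions from $v_\varepsilon$, they give \eqref{N9} and then $\varepsilon+\sum_j\lambda_j^{-(n-2)/2}+\sum\varepsilon_{jr}=o(\text{same})$.

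Three further corrections:
\begin{itemize}
\item Your sign for the $V$-term is wrong. For $n=5$ it is $-\alpha_iV(a_i)\gamma_5\lambda_i^{-2}$ with $\gamma_5>0$, opposite to the $\varepsilon$-term; otherwise the balance $\lambda^2\varepsilon\to\gamma_nV(b)/\bar c_3$ of Theorem \ref{t4} could not occur. This is harmless only because the $V$-contributions ($\lambda_i^{-1}$, $\ln\lambda_i/\lambda_i^{2}$, $\lambda_i^{-2}$ for $n=3,4,5$) are $o(\lambda_i^{-(n-2)/2})$, which is precisely why the statement stops at $n=5$.
\item All these expansions require $\varepsilon\ln\lambda_i\to0$, so that $\lambda_i^{-\varepsilon(n-2)/2}\to1$. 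This must be proved first by testing against $P\delta_i$ (Lemma \ref{L41}), and you omit it.
\item Your extra step bounding $\varepsilon_{ij}$ through the $a$-derivative is unjustified and unnecessary, since the weighted sum already puts $\sum\varepsilon_{ij}$ on the favorable side.
\end{itemize}
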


Next, we seek to understand the general case by relaxing the assumption that blow-up points belong to the interior of the domain, that is, we want to exclude the case where the points $a_{i,\e}'s$ converge to some points  belonging to the boundary  ($a_i\in\partial\O$). To address this challenging issue, we focus on the case of single blow-up  point and carry out precise asymptotic expansions of the gradient of the associated functional $I_\e$. For dimension five, the contribution of the error term $v_\e$ is negligible compared to the principal terms, which leads directly to the desired result. In dimension four, however, the error term is of the same order as the principal terms. To overcome this difficulty, we decompose the error term $v_\e$ into odd and even parts and refine  the estimates of the integrals involving $v_\e$. This allows us to obtain the result (see Section $5$ for details). Unfortunately, we were unable to include the case of $n=3$. We believe that the result should be true and that, to obtain it, we should use the projection used in \cite{2ABE} instead of the one we used in this paper. Furthermore, we expect that all these results extend naturally to the case of multiple concentration points. To be more precise, we 
 present our nonexistence result concerning single blowing-up solutions with residual mass.
\begin{thm}\label{t1} 
Let $n\in\{4, 5\}$, $\Omega$ be any smooth bounded domain in $\R^n$   and $ u_0 $ be a non-degenerate solution of  $(\mathcal{P}_{V,0})$ (for $\e=0$). Then, problem $(\mathcal{P}_{V,\varepsilon})$ has no   solutions $ u_\e $ such that  
\begin{align} 
 &  u_\e= u_0+ P\d_{(a_{\e},\l_{\e})}   + v_\e \quad \mbox{ with }       \label{dec} \\
 &  \| v_\e\|\to 0, \quad   a_\e\in\O,\quad \l_{\e}\,d(a_\e,\partial\O) \to \infty\quad\mbox{as}\quad \e\to 0. \label{cdec}
  \end{align}
\end{thm}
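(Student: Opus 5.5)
We argue by contradiction and suppose that $u_\e$ is a solution of $(\mathcal{P}_{V,\e})$ satisfying \eqref{dec}--\eqref{cdec}. Write $d_\e:=d(a_\e,\partial\O)$. If $a_\e$ converges to an interior point, Theorem \ref{t0} (with $N=1$) already gives a contradiction. Hence, up to a subsequence, we may assume $d_\e\to 0$ and $\l_\e d_\e\to\infty$.

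\emph{Reduction.} First, we replace the pair $(u_0,P\d_{(a_\e,\l_\e)})$ by a slightly perturbed pair $(\tilde u_\e, P\d_{(a_\e,\l_\e)})$ so that $v_\e$ becomes orthogonal to $P\d$, $\partial_\l P\d$, $\partial_a P\d$ and to the finite-dimensional kernel directions. The non-degeneracy of $u_0$ enters here: the linearized operator $-\D+V-p u_0^{p-1}$ is invertible, and this gives control of the $u_0$-component. Next, we estimate $v_\e$ using the analysis of the infinite-dimensional part in Section 3. Coercivity of $D^2I_\e$ on the orthogonal space gives
$$\|v_\e\|\lesssim \e+\frac{1}{(\l_\e d_\e)^{(n-2)/2}}+\int_\O u_0\,\d^{p-1}\ (\text{or its } n\text{-dependent analogue})+\frac{|\n V(a_\e)|}{\l_\e}+\cdots,$$
where the interaction term $\int u_0\d^{p-1}$ is of order $\l_\e^{-(n-2)/2}$. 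For $n=4$ the relevant quantity is $\l^{-1}$ up to logarithms.

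\emph{Expansions of the gradient.} We then test the equation against $\l\partial_\l P\d$ and against $\l^{-1}\partial_a P\d$, using the estimates of Section 4. Since $u_0(a_\e)\to u_0(\bar a)=0$ when $\bar a\in\partial\O$, the $\l$-derivative reads
$$0=c_1\e - c_2\frac{H(a_\e,a_\e)}{\l_\e^{n-2}}+c_3\frac{u_0(a_\e)}{\l_\e^{(n-2)/2}}-c_4\frac{V(a_\e)}{\l_\e^{2}}\,(\log\l_\e \text{ if } n=4)+O(\|v_\e\|^2+\text{h.o.t.}).$$
In this expansion $H(a,a)\sim d^{2-n}$ and $u_0(a_\e)\sim \partial_\nu u_0\, d_\e$ with $\partial_\nu u_0<0$ by Hopf's lemma. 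The $a$-derivative, taken in the normal direction, gives
$$0=\frac{\tilde c_1}{(\l d)^{n-1}}+\tilde c_2\frac{\partial_\nu u_0(\bar a)}{\l^{n/2}}+O(\cdot).$$
The key observation is that both principal terms in the normal direction have the same sign: boundary repulsion, and the Hopf sign of $u_0$ acting with the $\d^{p-1}$ interaction. Their sum therefore cannot vanish. The sign of the $V$-contribution $\partial_\nu V/\l^{2}$ (or $\log\l/\l^2$) has to be compared with these terms, and the $\l$-equation is used to show that, under the constraint $\l d\to\infty$, at least one of the two positive terms dominates it for $n=4,5$.

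\emph{Main obstacle.} The hard step is showing that the $v_\e$-contributions $\int v_\e\,\cdot\,(\text{derivative of } \d)$ are negligible. For $n=5$ the estimate $\|v_\e\|^2$ and the linear term $(f,v_\e)$ are smaller than $\l^{-5/2}$ and $(\l d)^{-4}$, because $\int u_0\d^{p-1}$ acting on the odd normal derivative gains an extra factor $d$ or $\l^{-1}$. For $n=4$ they are of the same order. In that case I would split $v_\e$ into parts that are even and odd with respect to the reflection about $a_\e$ in the normal direction. The even part is orthogonal, to leading order, to the odd test function $\partial_{a_\nu}\d$. The odd part is driven only by odd sources, namely $\n u_0(a_\e)\cdot(x-a_\e)$ and the regular part of $H$. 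I would therefore solve its leading profile explicitly, so that its contribution becomes an explicit term of the same sign, or a strictly smaller one. Combining this with the $\l$-expansion gives the relation between $\l$ and $d$, and the normal equation then yields the contradiction.
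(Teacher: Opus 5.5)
Your architecture is the paper's. You reduce to $d_\e\to0$ via Theorem \ref{t0} and project on $\l^{-1}\partial P\d/\partial a$ in the normal direction, where the boundary term $\sim(\l d)^{1-n}$ and the Hopf term $\sim-\partial_\nu u_0\,\l^{-n/2}$ have the same sign. You then close with the $\l$-equation and use a parity splitting of $v_\e$ for $n=4$.

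\textbf{The gap: the order of the $V$-term.} With the test function normalized as you (and the paper) do, the $V$-contribution to the $a$-equation is $\a\rho_n\nabla V(a)\,\l^{-3}$, and $\rho_n\nabla V(a)\ln(\l d)\,\l^{-3}$ for $n=4$ (see \eqref{k54bisas} and Proposition \ref{pas-n-4}). It is not $\partial_\nu V\,\l^{-2}$. Since $\l^{-3}=o(\l^{-n/2})$ exactly when $n\le5$, this term is simply negligible. That dimensional fact is why Theorem \ref{t1} needs no sign condition on $\partial_\nu V$, in contrast with Theorems \ref{t2}--\ref{t3}. The mechanism you propose instead would not work. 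Suppose the term really were $c\,\partial_\nu V(\bar a)\l^{-2}$ with $\partial_\nu V(\bar a)<0$ and $n=5$. Then $d\sim\l^{-1/2}$ balances it against $(\l d)^{-4}$, with $\l d\to\infty$. The $\l$-equation is then satisfied with $\e\sim\l^{-3/2}$, because $\e$ is free to absorb $H(a,a)\l^{-3}\sim\l^{-3/2}$. Nothing forces one of the positive terms to dominate.

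\textbf{The closing step should be explicit,} because the normal equation carries an $O(\e^2)$ error. For $n=5$ the paper first gets $(\l d)^{-4}+\l^{-5/2}\le c\e^2$. It then contradicts the $\l$-equation, where $\bar c_3\e$ and $u_0(a)\l^{-3/2}\ge c\,d\,\l^{-3/2}$ are both positive. For $n=4$ it reverses the order and extracts $\e\le c(\l d)^{-2}$ from the $\l$-equation first.

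\textbf{The case $n=4$: right idea, wrong target, missing steps.} The linear term in $v_\e$ is harmless: it is $O(\|v_\e\|[\e+d/\l+(\l d)^{-2}])$ by orthogonality. What needs parity is the quadratic term $\int P\d^{1-\e}v_\e^2\,\l^{-1}\partial_{a_k}P\d$, whose crude bound $\|v_\e\|^2$ contains $\l^{-2}$. The paper handles it in three steps.
\begin{enumerate}
\item Since $\O$ is not symmetric, it replaces $v_\e$ by its projection $\wtilde v_\e$ onto $H^1_0(B(a,d/2))$, with $|v_\e-\wtilde v_\e|\le c\|v_\e\|/d$ on $B(a,d/4)$ by \eqref{eq:est-1-2}.
\item Against the odd weight, both $\wtilde v_{e,\e}^2$ and $\wtilde v_{o,\e}^2$ integrate to zero, leaving $O(\|\wtilde v_{o,\e}\|\,\|\wtilde v_{e,\e}\|)$.
\item It shows $\|\wtilde v_{o,\e}\|$ is small by testing the equation against $\wtilde v_{o,\e}$. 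It then uses the coercivity \eqref{fin-52} of $\|w\|^2-3\int\d^2w^2$ on odd $w$, which copes with $\wtilde v_{o,\e}$ not being orthogonal to $\partial_{a_k}P\d$.
\end{enumerate}
Your plan to ``solve the odd profile explicitly'' must face exactly this kernel direction $\partial_{a_k}\d$ of the linearized operator on odd functions. You also do not need the odd contribution to have a sign, only to be $o(\l^{-2}+(\l d)^{-3})$.
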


Given the result of Theorem \ref{t1}, one naturally wonders what occurs in higher dimensions. In this context, we state the following result.
\begin{thm}\label{t2} 
Let $n \geq  6 $, $\Omega$ be any smooth bounded domain in $\R^n$, $ u_0 $ be a non-degenerate solution of  $(\mathcal{P}_{V,0})$ and $V:\overline{\O}\to \mathbb{R}$ be a $C^3$ positive function such that $\frac{\partial V}{\partial \nu}>0$ on the boundary of $\O$. Then, any solution $u_\varepsilon$ to $(\mathcal{P}_{V,\e})$ taking the form \eqref{dec} under condition \eqref{cdec} must satisfy $d(a_\e,\partial\O) \not\to 0$ as $\varepsilon \to 0$.
\end{thm}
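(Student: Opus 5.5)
Suppose, for contradiction, that $d_\e:=d(a_\e,\partial\O)\to 0$ along a subsequence, so that $a_\e\to a\in\partial\O$. We test the equation against the tangent vectors of the bubble manifold, $\partial P\d/\partial\l$ and $\partial P\d/\partial a$, and extract two balance laws. The aim is to show that the normal component of the $a$-balance has a definite sign. This sign must involve both the boundary interaction coming from the projection and the gradient of $V$, and it will contradict $\partial V/\partial\nu>0$.

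\textbf{Step 1: control of $v_\e$.} Following the analysis of the infinite-dimensional part (Section 3), we write $v_\e=(\bar u_\e-u_0)+\bar v_\e$, where $\bar u_\e$ is the perturbation of the nondegenerate $u_0$. Nondegeneracy gives $\|\bar u_\e-u_0\|=O(\e)$. We choose $\bar v_\e$ orthogonal to $P\d$, $\partial_\l P\d$ and $\partial_a P\d$. Its norm is then bounded by the size of the gradient restricted to that orthogonal space, namely
\[
\|\bar v_\e\|\lesssim \e+\frac{1}{\l_\e^{(n-2)/2}}+\frac{1}{(\l_\e d_\e)^{n-2}}+\frac{1}{\l_\e^{2}}\,(\text{with a log when } n=6)+\frac{|\n V(a_\e)|}{\l_\e^{3}}+\cdots .
\]
The term $\l_\e^{-(n-2)/2}$ comes from the interaction $u_0^{p-1}\d$ and from $\d^{p-1}u_0$. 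Since $n\ge 6$, $\|\bar v_\e\|^2$ is negligible with respect to the principal terms below; this is exactly why the dimension restriction appears.

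\textbf{Step 2: expansions of the gradient (from Section 4).} We write $H$ for the regular part of Green's function. The $\l$-equation reads
\[
\l\partial_\l I_\e: \ c_1\e - c_2\frac{H(a,a)}{\l^{n-2}} + c_3\frac{V(a)}{\l^2}(\log \l \text{ if } n=4)+ c_4\frac{u_0(a)}{\l^{(n-2)/2}}+\text{o.t.}=0 .
\]
The $a$-equation reads
\[
\frac1\l\partial_a I_\e:\ c_5\frac{\n_a H(a,a)}{\l^{n-1}} - c_6\frac{\n V(a)}{\l^3}+c_7\frac{\n u_0(a)}{\l^{n/2}}+\text{o.t.}=0 ,
\]
with all $c_i>0$. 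Near the boundary, $H(a,a)\sim c\,d^{2-n}$ and $\n_aH(a,a)=-(n-2)c\,d^{1-n}\nu(\bar a)(1+o(1))$, where $\bar a$ is the projection of $a$ on $\partial\O$ and $\nu$ is the outward normal. The $u_0$ term needs care: $u_0=0$ on $\partial\O$, so $u_0(a)\approx d\,|\partial_\nu u_0|$ and $\n u_0(a)\approx \partial_\nu u_0(\bar a)\,\nu$, with $\partial_\nu u_0<0$ by Hopf. This $u_0$ contribution also pushes toward the boundary, which is the favourable direction. Its size is $\l^{-n/2}\le\l^{-3}$ for $n\ge6$, so it is at most comparable to the $\n V$ term and has the right sign.

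\textbf{Step 3: contradiction.} We take the scalar product of the $a$-equation with $\nu(\bar a)$. The normal component of the $H$-term is $-c(\l d)^{1-n}$, which is negative. The $V$-term contributes $-c_6\l^{-3}\partial_\nu V(a)$, also negative, since $\partial_\nu V>0$ near the boundary by continuity. The $u_0$-term contributes $c_7\l^{-n/2}\partial_\nu u_0\le 0$. All principal terms therefore have the same sign, and their sum is bounded below in absolute value by $c\big((\l d)^{1-n}+\l^{-3}\big)$. This is strictly larger than the remainder terms, which are $o$ of this quantity, including $\|\bar v_\e\|^2$ and the errors $\l^{-3}(\l d)^{-1}$ coming from expanding $V$ around $a$. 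This is the desired contradiction, so $d_\e\not\to 0$.

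The main obstacle is Step 2/3 bookkeeping: proving that every remainder in the normal $a$-derivative, especially the cross terms between $V$, the projection error $\d-P\d$, and $u_0$, is $o\big((\l d)^{1-n}+\l^{-3}\big)$. The most delicate are terms like $\l^{-2}(\l d)^{2-n}$ and those coming from $\bar v_\e$. I would first try to handle them with the refined estimate $\|\bar v_\e\|\lesssim \l^{-2}+(\l d)^{2-n}+\e$ together with $\e\lesssim \l^{-2}+(\l d)^{2-n}$, which follows from the $\l$-equation. With these, the products appearing in the expansion become of strictly lower order than the principal terms.
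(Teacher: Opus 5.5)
Your argument follows the paper's proof. You test against $\l\partial_\l P\d$ and $\l^{-1}\partial_a P\d$ and take the component of the $a$-balance along the outward normal $\nu$ at $\bar a$. You then observe that the $H$-, $V$- and $u_0$-contributions all have one sign, by \eqref{H-bord}, $\partial V/\partial\nu>0$ and the Hopf bound \eqref{u0-nu}. Finally you use the $\l$-balance to make the $O(\e^2)$ remainder harmless. The only difference is the order of the last step. The paper first reads off $\l^{-3}+(\l d)^{1-n}\le c\,\e^2$ from the normal $a$-balance and inserts this into \eqref{po12} to get $\e=o(\e)$. You first extract $\e\lesssim\l^{-2}+(\l d)^{2-n}$ from the $\l$-balance and then conclude in the $a$-balance. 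The two orderings are equivalent.

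You should, however, fix two sign slips in Step 2 that happen to cancel each other.

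\textbf{First slip.} With $\nu$ outward, $\frac{\partial H}{\partial a}(a,a)\cdot\nu=\frac{n-2}{(2d)^{n-1}}(1+o(1))>0$, not negative: as $a$ moves toward $\partial\O$ it approaches the reflected pole.

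\textbf{Second slip.} In the $a$-balance the $H$-term has the same sign as the $\n V$-term, not the opposite one. In Proposition~\ref{pas-geq5} the factor $1-2\a^{p-1-\e}\l^{-\e(n-2)/2}$ tends to $-1$, so the balance reads $\rho_n\frac{\n V(a)}{\l^3}+\frac{c_0\bar c_2}{2}\frac{1}{\l^{n-1}}\frac{\partial H}{\partial a}(a,a)-p\,\bar c_5\frac{\n u_0(a)}{\l^{n/2}}=\ldots$. Its normal component is a sum of three positive terms. If you corrected only one of your two signs, Step 3 would break.

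\textbf{Third sign, harmless.} The $V$-term in the $\l$-balance is $-\g_n V(a)\l^{-2}$, since $\int\d\,\l\partial_\l\d<0$ for $n\ge 5$. This does not affect your argument, because you only need an upper bound on $\e$. The term $u_0(a)\l^{-(n-2)/2}\le c\,d\,\l^{-(n-2)/2}=o(\l^{-2})$ is also negligible for $n\ge 6$.

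Your Step 1 decomposition is not needed. Proposition~\ref{p2}, with $v_\e\in E_{a,\l,u_0}$, already gives $\|v_\e\|^2\lesssim\e^2+\l^{-4}+(\l d)^{-(n+2)}$ (up to logarithmic factors when $n=6$). This is $O(\e^2)+o\big(\l^{-3}+(\l d)^{1-n}\big)$, which is all you use.
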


In Theorem \ref{t2}, we assumed that the normal derivative of $V$ is positive on the boundary, which allowed us to prove that the blow-up point  belongs to the interior of the domain, that is, $a_\e$ converges to $a\in\O$. To establish the necessity of this assumption, we now show that if the normal derivative of $V$ is negative at some  point $b\in\partial\O$, then there exists a solution such that $a_\e$ converges to $b$. More precisely, we have:
\begin{thm}\label{t3} 
Let $n \geq  7 $, $\Omega$ be any smooth bounded domain in $\R^n$, $ u_0 $ be a non-degenerate solution of  $(\mathcal{P}_{V,0})$ and $V:\overline{\O}\to \mathbb{R}$ be a $C^3$ positive function, $b$ be a non‑degenerate critical point  of the restriction of $V$ to the boundary such that $\frac{\partial V}{\partial \nu}(b)<0$. Then,  there exists a solution $u_\varepsilon$ to $(\mathcal{P}_{V,\e})$ of the form \eqref{dec} with \eqref{cdec} and $a_\e \to b$ as $\e\to 0$.

Consequently, for $\varepsilon > 0$ sufficiently small, there exist at least $q$ boundary blowing-up solutions with residual mass to $(\mathcal{P}_{V,\e})$, where $q$ denotes the number of critical points of the restriction of $V$ to the boundary at which the normal derivative of $V$ is negative.
\end{thm}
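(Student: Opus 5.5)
We construct the solutions by a Lyapunov--Schmidt reduction around the approximate solution $u_0+P\delta_{(a,\lambda)}$, with $a$ close to $b$ and at distance $d:=d(a,\partial\Omega)$ from the boundary. The parameters will be chosen in a regime where the boundary effect balances the potential effect. Write $\lambda=\Lambda\,\varepsilon^{-\theta}$ and $d=t\,\varepsilon^{\gamma}$ with $\Lambda,t$ in compact subsets of $(0,\infty)$. Set $E_{a,\lambda}=\{v\in H^1_0(\Omega): (v,P\delta)=(v,\partial_\lambda P\delta)=(v,\partial_{a_j}P\delta)=0\}$.

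\textbf{Step 1: the infinite-dimensional part.} We look for $u=u_0+P\delta_{(a,\lambda)}+\bar v+v$. Here $\bar v$ is a correction of $u_0$ in the directions of the linearized operator at $u_0$, which is invertible because $u_0$ is non-degenerate. The remaining term $v\in E_{a,\lambda}$ solves the projected equation $\Pi^{\perp}\nabla I_\varepsilon(u)=0$. The linearized operator $\|\cdot\|^2-(p-\varepsilon)\int (u_0+P\delta)^{p-1-\varepsilon}\,\cdot^2$ is uniformly coercive on $E_{a,\lambda}$. This follows from the standard spectral argument, combining nondegeneracy of $u_0$ with the bubble, plus the interaction estimate $\int u_0^{p-1}\delta\to0$. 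A contraction argument then gives a unique small $v$, with $\|v\|\le C(\varepsilon+\|$error$\|)$. We take the error estimates from Section 3. For $n\ge7$ the interaction terms $\int u_0\delta^{p-1}\sim\lambda^{-(n-2)/2}$ and $\int u_0^{p-1}\delta$ are of higher order than the main terms, and this is why we restrict to $n\ge 7$.

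\textbf{Step 2: the reduced problem.} It remains to cancel the components of $\nabla I_\varepsilon(u)$ along $\partial_\lambda P\delta$ and $\partial_{a}P\delta$. Using the expansions of Section 4 in the boundary regime, we expect
\[
\Big(\nabla I_\varepsilon(u),\lambda\partial_\lambda P\delta\Big)= c_1\varepsilon - c_2\frac{V(a)}{\lambda^2}+c_3\frac{H(a,a)}{\lambda^{n-2}}+c_4\frac{u_0(a)}{\lambda^{(n-2)/2}}+\text{h.o.t.}
\]
Near the boundary $H(a,a)\sim (2d)^{2-n}$. Since $u_0=0$ on $\partial\Omega$ we have $u_0(a)=O(d)$, so the $u_0$ term is of order $d\lambda^{-(n-2)/2}$, which is negligible for $n\ge7$. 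For the tangential and normal derivatives we expect
\[
\Big(\nabla I_\varepsilon(u),\tfrac1\lambda\partial_{a}P\delta\Big)= -c_5\frac{\nabla V(a)}{\lambda^3}+c_6\frac{\nabla_a H(a,a)}{\lambda^{n-1}}+\text{h.o.t.}
\]
In the normal direction this balances $-\partial_\nu V(b)\,\lambda^{-3}$ against $(\lambda d)^{1-n}\lambda^{-2}$. The hypothesis $\partial V/\partial\nu(b)<0$ makes the signs compatible: both terms push in opposite directions, and a zero exists at $\lambda d\sim\lambda^{1/(n-1)}$, so that $\lambda d\to\infty$ as required. The $\lambda$-equation then fixes $\lambda^2\sim V(b)/\varepsilon$, that is $\theta=1/2$, since the boundary term is of lower order there. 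In the tangential directions the leading term is $\nabla_T V(a)$. Its zero at the nondegenerate critical point $b$ of $V|_{\partial\Omega}$ has nonzero Brouwer degree.

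\textbf{Step 3: degree argument.} After rescaling, the reduced map in the variables $(\Lambda,t,a_T)$ is a small perturbation of an explicit map with a nondegenerate zero. The $\Lambda$-equation is monotone, the $t$-equation is monotone with a sign change, and the $a_T$-equation is given by $\nabla_T V$ near $b$. Hence the degree is nonzero on a small box, and a zero persists for $\varepsilon$ small. The resulting solution is positive by the usual argument: test the equation with $u^-$ and use the maximum principle together with smallness of $v$. Different critical points $b$ give distinct solutions, because their concentration points differ, which yields the count $q$.

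\textbf{Main obstacle.} The main difficulty is Step 2. We need the expansion of the $a$-derivative in the boundary regime with remainders, including those coming from $v$ and from $u_0$ (via $\nabla u_0(a)\neq0$ at the boundary), that are genuinely smaller than $\lambda^{-3}|\partial_\nu V|$. The first thing to try is to estimate $\|v\|=O(\varepsilon\log\lambda+\lambda^{-2}+(\lambda d)^{2-n})$ and check that it is $o(\lambda^{-2})$ after pairing, using $n\ge7$.
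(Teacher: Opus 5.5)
Your strategy matches the paper's. You reduce via Proposition \ref{p2}, read off $\l^2\e\to\g_nV(b)/\ov{c}_3$ from the $\l$-equation, balance $\rho_n\,\partial_\nu V(b)/\l^3$ against the boundary part of $\partial_aH(a,a)/\l^{n-1}$ in the normal direction, use the nondegeneracy of $b$ tangentially, and close with a fixed-point argument. However, as written your box in Step 3 is at the wrong scale. The $H$-term in the $a$-equation has size $\partial_\nu H(a,a)/\l^{n-1}\sim(\l d)^{1-n}$, as your own display shows, not $(\l d)^{1-n}\l^{-2}$. Balancing it against $|\partial_\nu V(b)|\,\l^{-3}$ gives $\l d\sim\l^{3/(n-1)}$, i.e. $d\sim\e^{(n-4)/(2(n-1))}$. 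This is exactly the window in the paper's set $\mathbb{A}(\e)$ and its change of variables $d^{n-1}=C\,\e^{(n-4)/2}(1+D)$. At your scale $\l d\sim\l^{1/(n-1)}$ the boundary term is of order $\l^{-1}$, larger than the $V$-term by a factor $\l^2$. So the normal equation has no zero in your box, and the degree there is $0$.

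The signs also matter, since they are what separates Theorem \ref{t2} from Theorem \ref{t3}. In Proposition \ref{pas-geq5} the $H$-term carries the factor $-\frac{\a}{2}c_0\ov{c}_2\,[1-2\a^{p-1-\e}\l^{-\e(n-2)/2}]\approx+\frac12 c_0\ov{c}_2$, the same sign as $\rho_n\nabla V(a)/\l^3$. Since $\partial_\nu H(a,a)\approx(n-2)(2d)^{1-n}>0$, the normal equation is solvable exactly when $\partial_\nu V(b)<0$. With the opposite signs in your displayed expansion you would need $\partial_\nu V(b)>0$, which contradicts your (correct) verbal conclusion.

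Step 1 also has a counting error. Your $E_{a,\l}$ imposes $(v,P\d)=0$, but there is no free amplitude in front of $P\d$, and Step 2 only cancels the components along $\partial_\l P\d$ and $\partial_aP\d$. The component $\langle I_\e'(u),P\d\rangle\approx S_n\big(1-c_0^{-\e}\l^{-\e(n-2)/2}\big)$ is of order $\e|\ln\e|$ and is never cancelled. You therefore have $n+2$ equations for $n+1$ unknowns. The paper writes $u=\a_0u_0+\a P\d+v$ with $v\perp u_0,P\d,\partial P\d$, and solves the $\a_0$- and $\a$-equations from Propositions \ref{rp41} and \ref{rp42} ($\b_0=O(\e)$, $\b=O(\e|\ln\e|)$). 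Alternatively, drop $(v,P\d)=0$. In either version the quadratic form is only nondegenerate, not coercive: $q_\e(u_0)\approx(1-p)\int_\O u_0^{p+1}<0$. Nondegeneracy is enough for the fixed point.

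Finally, take the tangential equation on $T_{\ov{a}}\partial\O$, where $\ov{a}$ is the boundary point nearest $a$, not on $T_b\partial\O$. On $T_b\partial\O$ the large normal $H$-term enters through the tilt $\langle\nu_{\ov{a}},y\rangle=D^2g(b)(\xi,y)+\dots$, at the same order as $\nabla V$. This contribution is what turns $D^2V(b)$ into $D^2(V_{|\partial\O})(b)=D^2V(b)-\partial_\nu V(b)D^2g(b)$, the operator whose invertibility you are assuming; see \eqref{moh4}--\eqref{moh6}. Your guess $(\l d)^{2-n}$ for the boundary contribution to $\|v\|$ is too optimistic: Proposition \ref{p2} gives $(\l d)^{-(n+2)/2}$. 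At the correct scale this is still $o(\l^{-3})$ after squaring, so your identified main obstacle is harmless.
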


Note that Theorem \ref{t3} does not exclude the possibility of the existence of interior blowing-up solutions of $(\mathcal{P}_{V,\e})$ carrying residual mass. Theorem \ref{t2}, on the other hand, suggests that if the normal derivative of $V$ is positive on the boundary, then such solutions must blow up in the interior. In fact, in higher dimensions,  one can always construct interior blowing-up solutions with residual mass without imposing any condition on the sign of the normal derivative of $V$ on the boundary. Our next results aim to construct interior blowing-up solutions with residual mass, either with isolated or with clustered bubbles. Here, isolated bubbles are understood as a finite sum of  $P\d_{a_{i,\e},\l_{i,\e}}$ whose concentration points $a_{i,\e}'s$ are uniformly separated from one another. In contrast, clustered bubbles refer to a sum of $P\d_{a_{i,\e},\l_{i,\e}}$ with comparable concentration rates $\l_{i,\e}'s$ and whose centers $a_{i,\e}'s$ converge to a common point $b$ as $\e\to 0$.  Next, we begin by constructing blowing-up solutions of $(\mathcal{P}_{V,\e})$  with residual mass and isolated bubbles. Our result is as follows:
\begin{thm}\label{t4} 
Let $n \geq  7 $, $\Omega$ be any smooth bounded domain in $\R^n$, $ u_0 $ be a non-degenerate solution of  $(\mathcal{P}_{V,0})$ and $V:\overline{\O}\to \mathbb{R}$ be a $C^3$ positive function having $r$ non-degenerate critical points $b_1$, ..., $b_r$. Then, for any $N\leq r$, there exits $\e_*>0$ such that for any $\e\in (0,\e_*]$, problem $(\mathcal{P}_{V,\e})$  has a solution $u_{\e, b_{i_1},...,b_{i_N}}$ satisfying: $u_{\e, b_{i_1},...,b_{i_N}}$ develops exactly one bubble at each point $b_{i_j}$ and converges weakly to $u_0$ in $H^1_0(\O)$ as $\e \to 0$. More precisely there exist $\l_{j,\e} > 0$  and  $a_{j,\e}\to b_{i_j}$ with $ j \in \{1, \cdots, N \}$ such that
$$
\bigg |\bigg | u_{\e,b_{i_1},...,b_{i_N}}-u_0 -\sum_{j=1}^N P\d_{(a_{ j,\e}, \l_{ j,\e})} \bigg |\bigg | \to 0,\quad  \quad\mbox{as}\quad \e\to 0.
$$
Moreover, we have 
$$ 
\lim_{ \e \to 0} \l_{ j , \e }^2 {\e } = \frac{ \g_n }{ \ov{c}_3} V(b_{i_j}) \qquad \mbox{ and } \qquad  | a_{ j, \e } - b_{i_j} | \leq c ( \e^{(n-6) / 4 } | \ln \e |  +  \e^{1/2}  )
 $$ where $ \ov{c}_3$ and $ \g_n$ are positive constants defined in Proposition \ref{rp43}. 

Consequently, for $\e>0$ sufficiently small, $(\mathcal{P}_{V,\e})$  has at least $2^r-1$ interior blowing-up solutions which converge weakly to $u_0$.
\end{thm}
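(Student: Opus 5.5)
The plan is a Lyapunov–Schmidt finite-dimensional reduction around the approximate solution
$$U_{\e}=u_0+\sum_{j=1}^N P\d_{(a_j,\l_j)},$$
with $a_j\in B(b_{i_j},\rho)$ and $\l_j^2\e\in[\eta,1/\eta]$. Since the $b_{i_j}$ are distinct interior points, the interactions satisfy $\e_{ij}=O(\l^{2-n})=O(\e^{(n-2)/2})$ and are negligible.

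\textbf{Step 1 (infinite-dimensional part).} Let $E_{a,\l}$ be the orthogonal complement in $H^1_0(\O)$ of the span of the $P\d_j$, $\partial P\d_j/\partial\l_j$, $\partial P\d_j/\partial a_j$. The non-degeneracy of $u_0$ is built into the linearization, because the kernel of $-\D+V-p u_0^{p-1}$ is trivial. Then:
\begin{itemize}
\item Use the results of Section 3 to show that $I_\e''(U_\e)$ restricted to $E_{a,\l}$ is uniformly coercive. Weak interaction between $u_0$ and the bubbles follows since $u_0$ is bounded and the bubbles concentrate.
\item Solve $\Pi_{E}\nabla I_\e(U_\e+v)=0$ by a contraction argument. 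This gives a unique small $v=v(\e,a,\l)$ with $\|v\|\le c\|\nabla I_\e(U_\e)\|_{E}$.
\item Estimate the right-hand side as $O(\e+\l^{-2}+\l^{-(n-2)/2}+\dots)$. For $n\ge7$ the mixed terms $u_0^{p-1}\d$ and $u_0\d^{p-1}$ contribute $O(\l^{-(n-2)/2})$, which is smaller than $\l^{-2}$; this is the main reason for the restriction $n\ge 7$.
\end{itemize}

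\textbf{Step 2 (reduced equations).} It remains to solve the finite-dimensional system
$$\big(\nabla I_\e(U_\e+v),\ \l_j\,\partial P\d_j/\partial\l_j\big)=0,\qquad \big(\nabla I_\e(U_\e+v),\ \l_j^{-1}\partial P\d_j/\partial a_j\big)=0.$$
The expansions of Section 4 (Proposition \ref{rp43}) should give, for the $\l$-component,
$$-\ov{c}_3\,\e+\g_n\frac{V(a_j)}{\l_j^2}+o\big(\e+\l_j^{-2}\big).$$
The interaction with $u_0$ is of order $\l_j^{-(n-2)/2}$ and the boundary term of order $(\l_j d_j)^{2-n}$; both are negligible for $n\ge7$. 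For the $a$-component one expects
$$c\,\frac{\nabla V(a_j)}{\l_j^3}+O\Big(\l_j^{-n/2}\ln\l_j+\l_j^{-4}\Big).$$
Set $\l_j^2\e=\frac{\g_n}{\ov{c}_3}V(a_j)(1+\tau_j)$. The $\l$-equations then become $\tau_j=o(1)$, and the $a$-equations become $\nabla V(a_j)=O\big(\e^{(n-6)/4}|\ln\e|+\e^{1/2}\big)$.

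\textbf{Step 3 (degree argument).} Since $\nabla V$ is a local diffeomorphism near each non-degenerate critical point $b_{i_j}$, a Brouwer degree argument (or the implicit function theorem) on the product of small balls yields a zero $(\tau,a)$. This gives $|a_j-b_{i_j}|\le c(\e^{(n-6)/4}|\ln\e|+\e^{1/2})$ and $\l_j^2\e\to \g_nV(b_{i_j})/\ov{c}_3$.

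\textbf{Step 4 (positivity and counting).} Test the equation with $u_\e^-$ and use Sobolev inequality to get $u_\e^-=0$ because $\|u^-\|$ is small; the maximum principle then gives $u_\e>0$. Each nonempty subset of $\{b_1,\dots,b_r\}$ gives a distinct solution, since the blow-up sets differ. This yields $2^r-1$ solutions.

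The main obstacle is Step 2: controlling the contributions of $v$ and of the interaction between $u_0$ and the bubbles in the $a$-derivative precisely enough that they stay below $\nabla V/\l^3$. I would first bound them by $\|v\|^2+\l^{-1}\|v\|$ together with the estimate from Step 1, and then check that the required inequality holds exactly when $n\ge7$.
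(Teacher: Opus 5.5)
Your overall architecture matches the paper's: reduction to the blow-up parameters, the expansions of Section 4, a Brouwer argument, positivity via $u_\varepsilon^-$, and counting subsets. As written, however, two steps fail.

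\textbf{The constraints and the parameters do not match.} You impose $v\perp P\delta_j$ in addition to $v\perp\partial_{\lambda_j}P\delta_j,\ \partial_{a_j}P\delta_j$. Yet your ansatz $u_0+\sum_j P\delta_j$ has no amplitudes, and your reduced system contains only the $\lambda$- and $a$-equations. After Step 1, $I'_\varepsilon(u)$ is a combination of $P\delta_j$, $\lambda_j\partial_{\lambda_j}P\delta_j$ and $\lambda_j^{-1}\partial_{a_j}P\delta_j$, so you also need $\langle I'_\varepsilon(u),P\delta_j\rangle=0$. By Proposition~\ref{rp42} with $\alpha_j=1$, this quantity equals $S_n(1-c_0^{-\varepsilon}\lambda_j^{-\varepsilon(n-2)/2})+O(\varepsilon)\sim\frac{n-2}{4}S_n\,\varepsilon|\ln\varepsilon|\neq0$. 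Hence no choice of $(\lambda,a)$ produces a critical point. The paper avoids this by introducing amplitudes $\alpha_0,\alpha_j$ with $|\alpha_j-1|<\varepsilon\ln^2\varepsilon$ and solving the extra equations via Propositions~\ref{rp41}--\ref{rp42}, which gives $1-\alpha_j^{p-1}=O(\varepsilon|\ln\varepsilon|)$. If you instead drop $P\delta_j$ from the constraints, $v$ picks up a component of size $\varepsilon|\ln\varepsilon|$ along $P\delta_j$, and your bound on $\|v\|$ loses a logarithm. Omitting $\alpha_0$ and the condition $v\perp u_0$ is harmless by non-degeneracy of $u_0$. But then $I''_\varepsilon$ on $E_{a,\lambda}$ is not coercive: $I_0''(u_0)[u_0,u_0]=(1-p)\|u_0\|^2<0$, and $u_0$ lies almost in $E_{a,\lambda}$. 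Only uniform invertibility holds; that is what the contraction argument needs and what the paper asserts (``non-degenerate'').

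\textbf{Your bound for what you call the main obstacle is too weak.} The principal term of the $a$-equation is $\rho_n\nabla V(a_j)\lambda_j^{-3}\sim\varepsilon^{3/2}|\nabla V(a_j)|$, while $\|v\|\sim\varepsilon\sim\lambda^{-2}$. So the bound $\|v\|^2+\lambda^{-1}\|v\|=O(\lambda^{-3})$ is of the same order as the principal term. It yields only $\nabla V(a_j)=O(1)$, which cannot drive a degree argument on small balls around $b_{i_j}$, and no restriction on $n$ changes this. The missing idea is to use $v\perp\lambda_j^{-1}\partial_{a_j}P\delta_j$ in the linear-in-$v$ term. This gives $p\int\delta_j^{p-1}\lambda_j^{-1}\partial_{a_j}\delta_j\,v=-\int V\lambda_j^{-1}\partial_{a_j}P\delta_j\,v=O(\|v\|\lambda_j^{-2})$ (cf.\ \eqref{as6}). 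Together with the $\varepsilon$- and $u_0$-corrections, the linear term is then $O(\|v\|(\varepsilon+\lambda^{-2}))=O(\varepsilon^2)$. Combined with $\|v\|=O(\varepsilon)$ (which needs the amplitude fix above), this gives $\nabla V(a_j)=O(\varepsilon^{1/2}+\varepsilon^{(n-6)/4}|\ln\varepsilon|)$, as in the paper.

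Incidentally, $n\geq7$ is not forced by the size of $v$; in fact $\|u_0^{p-1}\delta\|_{L^{2n/(n+2)}}\sim\lambda^{-2}$, not $\lambda^{-(n-2)/2}$. It is forced by the reduced equations: you need $u_0(a_j)\lambda_j^{-(n-2)/2}=o(\varepsilon)$ in the $\lambda$-equation and $\nabla u_0(a_j)\lambda_j^{-n/2}=o(\varepsilon^{3/2})$ in the $a$-equation.
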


We note that a result similar to Theorem  \ref{t4} was obtained in \cite{ABE} in the context of the scalar curvature problem on high-dimensional spheres. In that work, the nonlinear term is multiplied by a non-constant Morse function, which prevents the formation of blowing-up solutions with clustered bubbles and consequently simplifies the analysis. In contrast, in our setting, clustered bubbling does occur, requiring a more delicate and detailed analysis. The following results  construct interior blowing-up solutions with residual mass and clustered bubbles centered at a critical point of $V$. To this end, we introduce some notation. For $N \in \mathbb{N}$ and $b$ a critical point of $V$, we define the following function:
\be\label{FN}
F_{N,b}(y_1,\cdots , y_N)=\sum_{j=1}^N D^2V(b)(y_j,y_j) -  \sum_{l\neq k}\frac{1}{|y_l-y_k|^{n-2}},
\ee
where $(y_1,\cdots , y_N)\in (\mathbb{R}^n)^N$ such that $y_i\neq y_j$ if $i\neq j$.

Our result can be stated as follows:
\begin{thm}\label{t5}
Let $n \geq  7 $, $\Omega$ be any smooth bounded domain in $\R^n$, $ u_0 $ be a non-degenerate solution of  $(\mathcal{P}_{V,0})$, $V:\overline{\O}\to \mathbb{R}$ be a $C^3$ positive function, $b$ be a non-degenerate critical point of $V$, and $N\in \N$ with $N\geq 2$. Assume that $F_{N,b}$ has a non-degenerate critical point $(\bar{y}_1,\cdots, \bar{y}_N)$. Then, there exits $\e_*>0$ such that for any $\e\in (0,\e_*]$, problem $(\mathcal{P}_{V,\e})$  has a solution $u_{\e, b}$ satisfying:
\begin{align}  
 &  \Big\| u_{\e,b} -u_0- \sum_{i=1}^N P\d_{(a_{i,\e}, \l_{i,\e})}   \Big\| \to 0 \quad \mbox{ as } \, \,  \e \to 0   \quad \mbox{ with } \notag \\  
& \lim_{ \e \to 0 } {\l_{i,\e}} ^2 {\e} =  \frac{ \g_n }{ \ov{c}_3} V(b) \qquad \mbox{ and } \qquad 
\lim_{ \e \to 0 }  \e^{\frac{4 - n }{2n}} ( a_{i,\e} - b )  = \sigma \overline{y}_i  \qquad  \forall \, \,  i \in \{ 1 , \cdots , N \}  \label{t32} 
\end{align}
where  $\sigma$ is the constant defined by \eqref{s2} and $ \ov{c}_3$ and $ \g_n$ are positive constants defined in Proposition \ref{rp43}.\\
Moreover, if for each $N$,  $F_{N,b}$ has a non-degenerate critical point, then, problem $(\mathcal{P}_{V,\e})$ has an arbitrary number of  distinct solutions provided that $\e$ is small.
\end{thm}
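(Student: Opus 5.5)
We prove Theorem \ref{t5} by a finite-dimensional Lyapunov--Schmidt reduction around $u_0+\sum_{i=1}^N P\delta_{(a_i,\lambda_i)}$, with the concentration parameters in the regime suggested by the statement.

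\emph{The regime.} Write
\[
\lambda_i = \Lambda_i\,\varepsilon^{-1/2}, \qquad a_i = b+\sigma\,\varepsilon^{\frac{n-4}{2n}}\, y_i ,
\]
with
\[
\Lambda_i\in\Big[\tfrac12\big(\tfrac{\gamma_n}{\overline c_3}V(b)\big)^{1/2},\,2\big(\tfrac{\gamma_n}{\overline c_3}V(b)\big)^{1/2}\Big]
\]
and $(y_1,\dots,y_N)$ in a small ball around $(\bar y_1,\dots,\bar y_N)$. In this regime:
\begin{itemize}
\item the points $a_i$ stay in a fixed compact subset of $\Omega$, so the projection errors $\varphi_i=\delta_i-P\delta_i=O(\lambda_i^{(2-n)/2})$ are harmless;
\item the interaction quantities behave like $\varepsilon_{ij}\sim (\lambda^2|a_i-a_j|^2)^{(2-n)/2}$.
\end{itemize}

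\emph{Reduction step.} Let $E_{a,\lambda}$ be the orthogonal complement in $H^1_0(\Omega)$ of the span of
\[
P\delta_i,\qquad \partial_{\lambda_i}P\delta_i,\qquad \partial_{a_i}P\delta_i,
\]
together with the kernel directions coming from $u_0$. Since $u_0$ is non-degenerate, the linearized operator $-\Delta+V-(p-\varepsilon)u_0^{p-1-\varepsilon}$ is invertible on $H^1_0(\Omega)$. We combine this with the nondegeneracy of the bubbles (the results of Section 3). A contraction argument then yields a unique
\[
v=v(a,\lambda,\varepsilon)\in E_{a,\lambda}
\]
solving the equation projected onto $E_{a,\lambda}$, with the $\|v\|$ estimate of Section 3; for $n\ge 7$ this bound is $o$ of the leading terms below. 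The construction is done exactly as in Theorem \ref{t4}, and we reuse its infinite-dimensional part verbatim.

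\emph{Reduced equations.} It remains to solve $\partial_{\lambda_i}I_\varepsilon=0$ and $\partial_{a_i}I_\varepsilon=0$ on $u_0+\sum_i P\delta_i+v$. For this we use the gradient expansions of Section 4 (Proposition \ref{rp43} and its companions).
\begin{itemize}
\item \emph{$\lambda$-derivative.} The $\lambda$-derivative reads
\[
\lambda_i\partial_{\lambda_i}I_\varepsilon=\gamma_n\,\varepsilon-\overline c_3\,\frac{V(a_i)}{\lambda_i^2}+O\Big(\sum_{j\ne i}\varepsilon_{ij}\Big)+O\big(\lambda^{-(n-2)/2}\big)+o(\varepsilon).
\]
Here $u_0$ contributes at order $\lambda^{-(n-2)/2}=o(\varepsilon)$ since $n\ge7$. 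Also $\varepsilon_{ij}\sim \varepsilon^{(n-2)/2}\cdot\varepsilon^{-(n-2)(n-4)/(2n)}$, which is $o(\varepsilon)$ once $n\ge7$. Hence the leading equation is
\[
\Lambda_i^2=\frac{\gamma_n}{\overline c_3}V(b)+o(1),
\]
which is non-degenerate in $\Lambda_i$.
\item \emph{$a$-derivative.} The $a$-derivative gives
\[
\frac1{\lambda_i}\partial_{a_i}I_\varepsilon = -c\,\frac{\nabla V(a_i)}{\lambda_i^2}+c'\sum_{j\ne i}\frac{1}{\lambda_i}\partial_{a_i}\varepsilon_{ij}+\text{h.o.t.}
\]
Using $\nabla V(a_i)=D^2V(b)(a_i-b)+O(|a_i-b|^2)$ and
\[
\frac1{\lambda}\partial_{a_i}\varepsilon_{ij}\approx -(n-2)\,\lambda^{2-n}\,\frac{a_i-a_j}{|a_i-a_j|^{n}},
\]
both terms have the same order $\varepsilon^{1+\frac{n-4}{2n}}$ exactly when $|a_i-b|\sim\varepsilon^{(n-4)/(2n)}$. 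This is where the scale comes from. With the right choice of $\sigma$ (definition \eqref{s2}), after dividing by that order the equation becomes
\[
\nabla_{y_i}F_{N,b}(y)+o(1)=0 .
\]
\end{itemize}

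\emph{Degree argument.} We solve the full system by the Brouwer degree (or implicit function theorem) in $(\Lambda,y)$. The limit map
\[
(\Lambda,y)\mapsto\Big(\Lambda_i^2-\tfrac{\gamma_n}{\overline c_3}V(b),\ \nabla F_{N,b}(y)\Big)
\]
has a non-degenerate zero at the point $\big(\Lambda^*,\bar y\big)$, so its degree there is $\pm1$. Hence for small $\varepsilon$ a zero persists, and positivity of $u$ follows by the standard argument. This gives the asymptotics \eqref{t32}.

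\emph{Multiplicity.} For the last claim, solutions with different $N$ carry different energies, asymptotically $I(u_0)+N S^{n/2}/n$, so they are distinct.

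\emph{Main obstacle.} The hardest step is the $a$-derivative expansion. All cross terms must be shown to be $o\big(\varepsilon^{1+(n-4)/(2n)}\big)$, namely:
\begin{itemize}
\item the interaction of the bubbles with $u_0$, of size $\lambda^{-(n-2)/2}\nabla u_0$-type terms;
\item the $v$-terms;
\item the $\varepsilon\ln\lambda$ corrections;
\item the projection terms $\lambda^{1-n}\nabla H$.
\end{itemize}
Since $\lambda^{-(n-2)/2}\cdot\lambda^{-1}$ must beat $\varepsilon^{1+(n-4)/(2n)}$, this is where $n\ge7$ is used. If a $u_0$-term is too large, we would try exploiting its oddness under $x\mapsto 2a_i-x$ to gain a factor $\lambda^{-1}$.
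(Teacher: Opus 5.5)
Your outline follows the paper's proof. It uses the same regime $\lambda_i^2\varepsilon\to\gamma_nV(b)/\overline{c}_3$, $a_i-b=\sigma\varepsilon^{(n-4)/(2n)}(\overline{y}_i+\tau_i)$, the same expansions (Propositions~\ref{rp43} and~\ref{p9}), the same limit system and a Brouwer argument. Two points, however, need correcting.

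First, you impose orthogonality of $v$ to $P\delta_i$ and to ``kernel directions coming from $u_0$''. There are none: non-degeneracy of $u_0$ means a trivial kernel. Moreover, your ansatz $u_0+\sum_iP\delta_i+v$ has no free amplitudes, and you solve only for $(\Lambda,y)$. The $N+1$ equations $\langle I_\varepsilon'(u),u_0\rangle=0$ and $\langle I_\varepsilon'(u),P\delta_i\rangle=0$ are then left with no parameters to satisfy them. You have two fixes:
\begin{itemize}
\item keep $\alpha_0,\alpha_i$ as the paper does, in which case Propositions~\ref{rp41} and~\ref{rp42} give $1-\alpha_0^{p-1}=O(\varepsilon)$ and $1-\alpha_i^{p-1}=O(\varepsilon|\ln\varepsilon|)$;
\item or drop $P\delta_i$ and $u_0$ from the orthogonality conditions, as in the standard Lyapunov--Schmidt set-up.
\end{itemize}
Also, by Proposition~\ref{rp43} the $\lambda$-equation is $\overline{c}_3\varepsilon-\gamma_nV(a_i)\lambda_i^{-2}+\dots$. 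You swapped the two constants, although your conclusion $\Lambda_i^2\to\gamma_nV(b)/\overline{c}_3$ is the correct one.

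Second, the step you call the main obstacle is not open. It is settled by the remainder $R_{ai}$ of Proposition~\ref{p9} together with \eqref{s7}. Your bookkeeping of it is, however, off by a power of $\lambda$.
\begin{itemize}
\item With the test function $\lambda_i^{-1}\partial P\delta_i/\partial a_i$, the two leading terms are $\rho_n\nabla V(a_i)\lambda_i^{-3}$ and $c_0\overline{c}_2\lambda_i^{-1}\partial\varepsilon_{ij}/\partial a_i=O(\lambda^{1-n}|a_i-a_j|^{1-n})$. Both are of order $\varepsilon^{3/2+\eta}=\varepsilon^{2-2/n}$ with $\eta=(n-4)/(2n)$, not $\lambda^{-2}$, $\lambda^{2-n}$ and $\varepsilon^{1+\eta}$ as you wrote.
\item The $u_0$-interaction there has size $p\,\overline{c}_5|\nabla u_0(a_i)|\lambda_i^{-n/2}$ (cf.\ \eqref{as12b}). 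The oddness of $\lambda^{-1}\partial\delta/\partial a$ is exactly what already produced the extra factor $\lambda^{-1}$. Nothing more can be gained, because $\nabla u_0(b)\neq0$ in general, so your fallback is moot.
\item The correct requirement is $\varepsilon^{n/4}|\ln\varepsilon|=o(\varepsilon^{2-2/n})$, i.e.\ $n^2-8n+8>0$, which is precisely $n\ge7$. This is the paper's error $\varepsilon^{\min(1/2,(n-6)/4)-\eta}|\ln\varepsilon|$ after dividing by $\varepsilon^{3/2+\eta}$. Your comparison of $\lambda^{-n/2}$ with $\varepsilon^{1+\eta}$ mixes the two normalizations and would only require $n\ge5$.
\item The relative sign of the two leading terms cannot be arranged by choosing $\sigma>0$. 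It comes from the factor $1-\sum_{k=i,j}c_0^{-\varepsilon}\alpha_k^{p-1-\varepsilon}\lambda_k^{-\varepsilon(n-2)/2}\to-1$ in Proposition~\ref{p9}. This is what makes the limit equation $\nabla F_{N,b}=0$, rather than the critical-point equation of $\sum_jD^2V(b)(y_j,y_j)+\sum_{l\neq k}|y_l-y_k|^{2-n}$.
\end{itemize}
Your energy argument for the distinctness of solutions with different $N$ is fine.
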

\begin{rem}
  We remark that if $ b $  is a non-degenerate maximum point of $V $, then the second derivative of $ V $ satisfies: $ D^2 V(b )(x,x) \leq - c | x |^2 $ for all $x$. Consequently, for each  $N \geq 2 $, the function $  F_{N,b } $ achieves its maximum and thus possesses at least one critical point.
\end{rem}

The aim of the following result is to treat the case of several interior blow-up points. 
\begin{thm}\label{t6}
Let $n \geq 7$, $\Omega$ be any smooth bounded domain in $\R^n$, $ u_0 $ be a non-degenerate solution of  $(\mathcal{P}_{V,0})$, $V:\overline{\O}\to \mathbb{R}$ be a $C^3$ positive function having $r$ non-degenerate critical points $b_1$, ..., $b_r$.   Let $\ell \leq r$ and  $N_1, \cdots , N_\ell \in \N$. For $ N_j \geq 2$, we assume that the function $F_{N_j,b_{i_j}}$ has a non-degenerate critical point $(\overline{y}_{j,1},\cdots , \overline{y}_{j,N_j })$ . Then, there exists $\e_*>0$ small such that for any  $\e\in (0,\e_*]$,  problem $(\mathcal{P}_{V,\varepsilon})$ admits a solution $u_{\e, b_1,\cdots, b_r}$ such that 
$$
 \Big\| u_{\e, b_1,\cdots, b_N}-u_0  - \sum_{i=1}^{N_1}P\d_{(a_{1,i,\e}, \l_{1,i,\e})} - \cdots - \sum_{i=1}^{N_\ell} P\d_{(a_{\ell ,i, \e }, \l_{ \ell ,i,\e})}   \Big\| \to 0 \qquad \mbox{ as } \quad \e \to 0 
$$
with   the rates $\l_{ j , i , \e } $'s and the concentration points $a_{ j , i , \e }$'s satisfy the properties introduced in  \eqref{t32} (with $ b_{i_j} $ instead of $ b $). 
\end{thm}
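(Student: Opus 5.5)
We prove Theorem \ref{t6} by a Lyapunov--Schmidt reduction, combining the constructions behind Theorems \ref{t4} and \ref{t5}. Set $N=N_1+\cdots+N_\ell$ and index the bubbles by pairs $(j,i)$, $1\le j\le \ell$, $1\le i\le N_j$. Following \eqref{t32}, we parametrize
$$\l_{j,i}^2=\frac{\g_n}{\ov{c}_3}\frac{V(b_{i_j})}{\e}(1+\Lambda_{j,i}),\qquad a_{j,i}=b_{i_j}+\sigma\,\e^{\frac{n-4}{2n}}\big(\overline{y}_{j,i}+z_{j,i}\big),$$
with $(\Lambda,z)$ in a small ball $\mathcal{B}$. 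When $N_j=1$ we take $\overline{y}_{j,1}=0$, so that $z_{j,1}$ plays the role of $a_{j,\e}-b_{i_j}$ at the scale of Theorem \ref{t4}. We look for solutions $u=u_0+\omega+\sum_{j,i}P\d_{j,i}+v$. Here $\omega$ is a small correction to $u_0$, and $v$ is orthogonal to $P\d_{j,i}$, $\partial_\l P\d_{j,i}$, $\partial_a P\d_{j,i}$ and to the kernel directions; the kernel is trivial since $u_0$ is non-degenerate.

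First, the infinite-dimensional part. Using Section 3 (invertibility of the linearized operator $-\D+V-(p-\e)(u_0+\sum P\d_{j,i})^{p-1-\e}$ on the orthogonal complement, which relies on the non-degeneracy of $u_0$), we solve for $(\omega,v)$ uniformly in $(\Lambda,z)\in\mathcal{B}$. We expect the same bounds as in the single-cluster case, for example $\|v\|\le c(\e^{1/2}+\sum\e_{kl}^{(n+2)/(2(n-2))}+\dots)$. Bubbles in different clusters satisfy $\e_{kl}=O(\e^{(n-2)/2})$, because their centers are separated, so these interactions are of higher order.

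Second, the reduced equations. We use the gradient expansions of Section 4 (Proposition \ref{rp43}) to compute $(\n I_\e(u),\l\partial_\l P\d_{j,i})$ and $(\n I_\e(u),\l^{-1}\partial_a P\d_{j,i})$. The $\l$-equation reads $\ov{c}_3\e-\g_n V(a_{j,i})/\l_{j,i}^2+O(\text{higher order})=0$. The $u_0$ interaction term $\sim u_0(a)\l^{-(n-2)/2}$ is negligible only when $n\ge7$, since $\e^{(n-2)/4}\ll\e$ requires $n>6$; this is why $n\ge7$ is assumed. The $a$-equation for bubble $(j,i)$ reads $\n V(a_{j,i})/\l^2$ plus intra-cluster interaction gradients $\l^{-1}\partial_a\e_{kl}$. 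The inter-cluster and $u_0$ contributions are smaller. With the scaling above, the intra-cluster interaction $\e^{(n-2)/2}|a_k-a_l|^{-(n-1)}$ balances $D^2V(b)(a-b)\,\e$, because $|a-b|^n\sim\e^{(n-4)/2}$. The rescaled equations for cluster $j$ are then $\n_{y}F_{N_j,b_{i_j}}(\overline{y}_j+z_j)=o(1)$ for $N_j\ge2$, and $D^2V(b_{i_j})z_{j,1}=o(1)$ for $N_j=1$. The clusters decouple to leading order.

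Third, the degree argument. The leading map $(\Lambda,z)\mapsto\big(\Lambda_{j,i},\,\n F_{N_j,b_{i_j}}(\overline{y}_j+z_j)\big)_j$ has a non-degenerate zero at the origin. Non-degeneracy comes from that of $\overline{y}_j$, and from that of $b_{i_j}$ in the singleton case. Its Brouwer degree on $\mathcal{B}$ is therefore nonzero, and it persists under the $o(1)$ perturbation, which gives a zero $(\Lambda_\e,z_\e)$. The resulting critical point is positive by the maximum principle, after testing with $u^-$ and using the smallness of $\omega+v$ away from the bubbles.

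The main obstacle is the second step: proving that all remainders are $o(\e\,|a-b|)$ in the $a$-equations, uniformly in $\mathcal{B}$. This covers the $v$-contributions, the projection error $\l^{-(n-2)}H(a,a)$ (harmless because $b_{i_j}\in\O$), the $u_0$-interaction gradient $\l^{-(n-2)/2}\n u_0$, and the inter-cluster terms. The singular scale $|a-b|\sim\e^{(n-4)/(2n)}$ is very small, so we must check carefully that $\e^{(n-2)/4}\ll\e^{1+(n-4)/(2n)}$, which holds for $n\ge7$. We also must check that the $v$-terms enter quadratically or through orthogonality.
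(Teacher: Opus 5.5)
Your overall route is the paper's. It is a Bahri--Coron type finite-dimensional reduction with $\l_{j,i}^2\e\to\g_nV(b_{i_j})/\ov{c}_3$ and $a_{j,i}-b_{i_j}=\sigma\e^{\eta}(\ov{y}_{j,i}+z_{j,i})$, $\eta=\frac{n-4}{2n}$. The clusters decouple because inter-cluster interactions are $O(\e^{(n-2)/2})$, and each cluster reduces to a non-degenerate critical point of $F_{N_j,b_{i_j}}$. Using degree instead of the paper's fixed-point argument is cosmetic, and your check $\e^{(n-2)/4}\ll\e^{1+\eta}\iff n^2-8n+8>0$ correctly shows where $n\ge7$ is needed.

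As written, however, the reduced equations do not close, because the bound you propose for the infinite-dimensional part, $\|v\|\le c(\e^{1/2}+\cdots)$, is too weak. In Propositions \ref{rp43} and \ref{p9} the $v$-contributions are $O(\|v\|^2)$ plus linear terms of size $O(\e\|v\|)$. With $\|v\|\sim\e^{1/2}$ these are $O(\e)$ and $O(\e^{3/2})$. The first is of the same order as the leading terms $\ov{c}_3\e-\g_nV(a)/\l^2$ of the $\l$-equation. Both dominate the leading terms of the $a$-equation, which are of order $\e^{3/2+\eta}$ in the normalization $\l^{-1}\partial P\d/\partial a$. So ``the $v$-terms enter quadratically or through orthogonality'' is not enough with that bound. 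What you need, and what Proposition \ref{p2} actually gives here, is $\|v\|=O(\e)$:
for $n\ge7$ one has $R_{33}=\l^{-2}\sim\e$ and $R_{34}=O(\e^{(n+2)/4})$, since the centers stay interior. Because $\e_{kl}=O(\e^{2(n-2)/n})$, the intra-cluster terms are $\e_{kl}^{\frac{n+2}{2(n-2)}}|\ln\e_{kl}|^{\frac{n+2}{2n}}=O(\e^{(n+2)/n}|\ln\e|)$. Then $\|v\|^2=O(\e^2)=o(\e^{3/2+\eta})$, precisely because $\eta<1/2$.

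There is also a bookkeeping gap in your ansatz. You impose $v\perp P\d_{j,i}$ but keep only $(\Lambda,z)$ as unknowns, so the $N$ equations $\langle I_\e'(u),P\d_{j,i}\rangle=0$ are never solved and the reduced system is overdetermined. There are two fixes:
\begin{itemize}
\item introduce amplitudes $\a_{j,i}$ as in \eqref{KA4} and dispose of these equations with Proposition \ref{rp42}, which gives $1-\a_{j,i}^{p-1}=O(\e|\ln\e|)$; this is what the paper does;
\item or drop $P\d_{j,i}$ from the orthogonality conditions, which is legitimate since $P\d_{j,i}$ is not an approximate kernel direction of the linearized operator.
\end{itemize}
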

\begin{rem}
\begin{enumerate}
\item
The interior blowing-up solutions with residual mass constructed in Theorems \ref{t4} and \ref{t6} can be combined to produce blowing-up solutions of $(\mathcal{P}_{V,\varepsilon})$  whose blow-up points at the interior split into two blocks: one consisting of isolated bubbles, the other of clustered bubbles. Furthermore, these theorems can be combined with Theorem \ref{t3} to produce blowing-up solutions of $(\mathcal{P}_{V,\varepsilon})$ exhibiting three types of blow-up points: interior isolated bubbles, interior clustered bubbles, and a boundary blow-up point.
\item
It would be natural to extend the results of this paper to similar boundary value problems with Robin conditions. We intend to return to these problems in future work.
\end{enumerate}
\end{rem}

The proof strategy of our results is based on refined asymptotic expansions of the gradient of the functional $I_\e$. The goal is to determine the equilibrium conditions that the blow-up parameters must satisfy. These conditions are obtained by evaluating the equation against vector fields corresponding to the dominant terms in the gradient with respect to the blow-up parameters. A careful analysis of these balancing conditions then provides the necessary information to prove our results.

\section{ Estimates for the Infinite-Dimensional Component}
Let $u_0$ be a non-degenerate solution of problem  $(\mathcal{P}_{V,0})$. It is well known (see \cite{BC}) that if the sequence $ (u_\e) $ of solutions of $(\mathcal{P}_{V,\e})$
 of the form \eqref{KR1} and satisfies conditions \eqref{KR2}, and \eqref{KR3},  then there exists a unique choice of parameters  $\lambda_{i,\e} $, $ a_{i,\e}$ and $v_\e $  such that: 
 
\begin{align}
 & u_\e= \alpha_{0,\e}\,  u_0 +\sum_{i=1}^N \alpha_{i,\e}  P\delta_{(a_{i,\e},\lambda_{i,\e})} +v_\e,  \qquad \mbox{ with }\quad \alpha_{0,\e}\longrightarrow 1,  \label{KA4} \\  
 & \begin{cases}
\alpha_{i,\e}\longrightarrow 1, \quad \l_{i,\e}d(a_{i,\e},\partial\O)\longrightarrow \infty  \quad as \quad \e\longrightarrow 0 \quad \forall i\in \lbrace 1,\cdots,N\rbrace , \\
\e_{ij} \longrightarrow 0  \quad \forall i\neq j , \quad \Vert v_\e \Vert\longrightarrow 0 \quad as \quad \e\longrightarrow 0 \quad \mbox{ and }  \quad v_\e \in E_{a_\e,\lambda_\e,u_0}, 
\end{cases}  \label{KA5}
\end{align}
where $ a_\e= (a_{1,\e},\cdots,a_{N,\e}) \in \O^N ,\lambda_\e=(\lambda_{1,\e},\cdots,\lambda_{N,\e})\in (0,\infty)^N$
 and $ E_{a_\e,\lambda_\e,u_0}$ denotes
\begin{align}\label{KA6}
 E_{a_\e,\lambda_\e,u_0} :=  \Big\{ v\in  H^1_0(\Omega): \Big(v,P\delta_{(a_{i,\e},\lambda_{i,\e})} \Big)
 & = \Big(v,\frac{\partial P\delta_{(a_{i,\e},\lambda_{i,\e})}}{\partial\l_{i,\e}}\Big)
= \Big(v,\frac{\partial P\delta_{(a_{i,\e},\lambda_{i,\e})}}{\partial (a_{i,\e})_j}\Big) \notag \\
& =(v,u_0)=0 \quad \forall \, \,  1\leqslant i \leqslant N ,  \quad \forall \, \,  1\leqslant j \leqslant n \Big\} . 
\end{align}
For simplicity of notation, we will henceforth write $\alpha_i $, $a_i $, $ \lambda_i $, $ E_{a,\lambda,u_0}  $ in place of  $ \alpha_{i,\e} $, $ a_{i,\e} $, $\lambda_{i,\e} $ and $ E_{a_\e,\lambda_\e,u_0} $,  respectively.
For $ \mu_0 $ positive small, we introduce the following sets: 
\begin{align}\label{KA7}
\mathbf{O} (N,\mu_0) = & \big\{ (\alpha, \lambda, a,v) \in \mathbb{R}_+^{N+1} \times \mathbb{R}_+^N \times \Omega^N \times H^1_0(\O): \vert\alpha_i -1\vert < \mu_0,   \,  \e \ln \l_i < \mu_0,\notag \\
 & \qquad  \qquad \qquad\qquad \l_id( a_i, \partial\O) > \mu_0^{-1}, \e_{ij} < \mu_0 , v \in  E_{a,\lambda,u_0}, \Vert v \Vert < \mu_0 \big\}, \\ 
\mathbf{B}(N,\mu_0) = & \big\{(\alpha,\lambda,a)\in \mathbb{R}_+^{N+1} \times \mathbb{R}_+^N \times \Omega^N :(\alpha,\lambda,a, 0)\in \mathbf{O} (N,\mu_0)\big\}. \label{KA8} 
\end{align}
As is customary in problems of this type, we begin by analyzing the $ v$-component of $ u_\e $, which represents the infinite-dimensional part. To this end, we perform an expansion of the functional $ I_\e $, defined in \eqref{funct}, with respect to $ v \in E_{a,\lambda,u_0} $ satisfying $ \Vert v\Vert < \mu_0 $. 
Let $ (\alpha,\lambda,a) $ $ \in \mathbf{B} (N,\mu_0) $, taking $  W =\alpha_0 \,u_0+ \sum_{i=1}^N \alpha_i P\delta_{(a_i,\lambda_i)} $,   and $ v \in  E_{a , \lambda,u_0} $ with $ \Vert v \Vert < \mu_0 $,  we see that
\begin{align}
 & I_{\varepsilon}(W+v) =I_{\varepsilon}(W)-\left\langle L_{\varepsilon}, v\right\rangle +\frac{1}{2} q_{\varepsilon}(v)+E_{\varepsilon}(v) ,  \quad \mbox{ where }   \label{KA14} \\
   &  \left\langle L_{\varepsilon}, v\right\rangle = \int_{\Omega}W^{p-\varepsilon}v \, \, ,  \qquad q_{\varepsilon}(v) =\int_{\Omega}\left|\nabla v \right|^{2} +\int_{\Omega}V\, v^{2} -(p-\varepsilon)\int_{\Omega}W^{p-1-\varepsilon} v^2 ,  \label{KA88} \\
 & E_{\varepsilon}(v)=O\left(\|v\|^{\operatorname{min}(3, p+1-\varepsilon)}\right) \, , \, \, E_{\varepsilon}^{\prime}(v)=O\left(\|v\|^{\operatorname{min}(2, p-\varepsilon)}\right) \, , \, \,\mbox{and}\notag\\
 &  E_{\varepsilon}^{\prime \prime} (v) = O\left(\|v\|^{\text {min }(1, p-1-\varepsilon)}\right) .  \label{KA15} 
\end{align} 
By closely following the proof of Proposition $2 $ in \cite{KK}, one finds that $q_\e$ is non-degenerate on the space $E_{a,\lambda,u_0}$.
 We now  address the infinite-dimensional variable 
$v$. Specifically, we prove  
\begin{pro}\label{p2}
Let $ n\geq 3 $, $\mu_0 $ positive small,  $ (\a,\l,a)\in  \mathbf{B}(N,\mu_0) $. Then, there exists a unique $ \ov{v}_\e \in E_{a,\lambda,u_0} $ which  extremizes $ I_\e (\a_0 \, u_0 +\sum_{i=1}^{N} \a_i \, P\d_{(a_i,\l_i)} + {v} ) $ with respect to $ v \in E_{a,\lambda,u_0} $ with $ \Vert v \Vert $ small.
As a result, we derive 
$$ \langle I_\e^{\prime} (\a_0 \, u_0 +\sum_{i=1}^{N} \a_i \, P\d_{(a_i,\l_i)} + \ov{v}_\e ) , h \rangle = 0 \quad \forall h \in E_{a,\lambda,u_0} . $$
Additionally, we have the following estimate: $ \Vert \ov{v}_\e \Vert \leq   c \,   R_{(\e,a,\lambda)} $, where 
  \be  R_{(\e,a,\lambda)}= \e+ \sum_{i=1}^N\big(R_{33}(i)+R_{34}(i)\big)+\sum_{j \neq i} \e_{ij}^{\frac{n+2}{2(n-2)}}\ln^{\frac{n+2}{2n}} \e_{ij}^{-1}+ (\mbox{ if }\, n\leq 5)\, \sum_{j\neq i} \e_{ij}  \label{Teal} \ee 
  
 \begin{align} & \mbox{ with } \quad    R_{33}(i) = \frac{1}{\lambda_i^{ (n-2) / {2}}}  \, \,  (\mbox{if } n \leq 5 ) \quad ; \quad \frac{ \ln^{ 2 / 3}  ( \lambda_i ) }{\lambda_i^{2}} \, \,   (\mbox{if }  n=6) \quad ; \quad  \frac{1}{\lambda_i^{2}} \, \,  (\mbox{if }  n \geq 7 ) ,  \label{R33}\\
 &\qquad\quad R_{34}(i)= \frac{1}{(\l_id_i)^{(n+2)/2}} +(\mbox{if}\, \, n\leq 5)\, \frac{1}{(\l_id_i)^{n-2}} +(\mbox{if}\,\, n=6)\, \frac{\ln (\l_i d_i ) }{(\l_id_i)^4}.\label{R34}
 \end{align}
\end{pro}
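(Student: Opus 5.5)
The plan is the standard Lyapunov--Schmidt reduction applied to the expansion \eqref{KA14}. Restricted to $E_{a,\lambda,u_0}$, the map
\[
v\mapsto I_\e(W+v)=I_\e(W)-\langle L_\e,v\rangle+\tfrac12 q_\e(v)+E_\e(v)
\]
is a smooth perturbation of a non-degenerate quadratic form. The non-degeneracy of $q_\e$ on $E_{a,\lambda,u_0}$ has already been noted, following Proposition 2 of \cite{KK}. It comes from the non-degeneracy of $u_0$, the fact that the kernel of the linearized operator at $\delta_i$ is spanned by $\delta_i,\partial_\lambda\delta_i,\partial_a\delta_i$, and the orthogonality conditions in \eqref{KA6}. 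So $q_\e(v)\ge c\|v\|^2$ up to sign on a finite-codimension splitting; more precisely, the associated self-adjoint operator $Q_\e$ on $E_{a,\lambda,u_0}$ is invertible with $\|Q_\e^{-1}\|\le C$ uniformly in $(\a,\l,a)\in\mathbf B(N,\mu_0)$.

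The critical point condition $\Pi_E(-L_\e+Q_\e v+E_\e'(v))=0$, with $\Pi_E$ the orthogonal projection onto $E$, becomes the fixed point problem
\[
v=Q_\e^{-1}\Pi_E\big(L_\e-E_\e'(v)\big).
\]
By \eqref{KA15}, $E_\e'(v)=O(\|v\|^{\min(2,p-\e)})$ and $E_\e''(v)=O(\|v\|^{\min(1,p-1-\e)})$. Hence on a ball $\|v\|\le 2C\|L_\e\|$ the map is a contraction, since $\mu_0$ is small and $\|L_\e\|$ is small. The contraction mapping theorem (equivalently, the implicit function theorem) gives a unique $\ov v_\e$ with $\|\ov v_\e\|\le c\|L_\e\|$, and $\ov v_\e$ satisfies $\langle I_\e'(W+\ov v_\e),h\rangle=0$ for all $h\in E_{a,\lambda,u_0}$. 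Uniqueness among small $v$ follows from the same contraction estimate.

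The substance is therefore the estimate $\|L_\e\|\le cR_{(\e,a,\lambda)}$ on $E$. Take $v\in E$. Since $u_0$ solves $(\mathcal P_{V,0})$, since $-\Delta P\delta_i=\delta_i^p$, and since $(v,u_0)=(v,P\delta_i)=0$, we can write
\[
\langle L_\e,v\rangle=\int_\O\Big(W^{p-\e}-\a_0u_0^p-\sum_i\a_i\delta_i^p\Big)v-\sum_i\a_i\int_\O V\,P\delta_i\,v .
\]
Each piece is estimated separately by Hölder and Sobolev inequalities, using $|v|_{L^{2n/(n-2)}}\le c\|v\|$.

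\begin{enumerate}
\item[(i)] \emph{The $\e$-perturbation.} The difference $W^{p-\e}-W^p$ is bounded by $c\,\e\,W^p|\ln W|$. Using $\e\ln\l_i<\mu_0$, its contribution is $O(\e)$.
\item[(ii)] \emph{The potential term.} $\int V P\delta_i v$ is bounded by $c|\delta_i|_{L^{2n/(n+2)}}\|v\|$. This gives $\l_i^{-2}$ for $n\ge7$, $\l_i^{-2}\ln^{2/3}\l_i$ for $n=6$, and $\l_i^{-(n-2)/2}$ for $n\le5$, which is exactly $R_{33}(i)$.
\item[(iii)] \emph{The projection error.} Write $P\delta_i=\delta_i-\varphi_i$ with $0\le\varphi_i\le c/(\l_i^{(n-2)/2}d_i^{n-2})$. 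The terms $p\,\delta_i^{p-1}\varphi_i$ integrated against $v$ give $R_{34}(i)$; the dimension split comes from computing $|\delta_i^{p-1}\varphi_i|_{L^{2n/(n+2)}}$.
\item[(iv)] \emph{Interactions.} The cross terms between $u_0$ and $\delta_i$ are controlled by $|u_0^{p-1}\delta_i|_{L^{2n/(n+2)}}$ and $|\delta_i^{p-1}u_0|_{L^{2n/(n+2)}}$. These are $O(\l_i^{-2})$ for $n\ge7$, with a log factor at $n=6$ and $O(\l_i^{-(n-2)/2})$ for $n\le5$, so they are absorbed in $R_{33}(i)$. The bubble–bubble interactions give the standard bound $\e_{ij}^{\frac{n+2}{2(n-2)}}\ln^{\frac{n+2}{2n}}\e_{ij}^{-1}$, plus $\e_{ij}$ when $n\le5$, where $p-1\ge 2$ allows a linear term $|\delta_i^{p-1}\delta_j|_{L^{2n/(n+2)}}\sim\e_{ij}$.
\end{enumerate}

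The main obstacle is this pointwise splitting of $W^{p}-\a_0^pu_0^p-\sum\a_i^p\delta_i^p$ (the $\a$ powers versus $\a$ differences contribute only $O(|\a-1|)$-type terms, which are handled by the orthogonality in $E$). The natural approach is to decompose $\O$ into the regions where $\delta_i$ dominates and the region where $u_0$ dominates, and to use $|(a+b)^p-a^p-b^p|\le c(a^{p-1}b+ab^{p-1})$ for $n\ge6$, or its sharper version $c\,a^{p-1}b$ on $\{a\ge b\}$ when $n\le5$. Each weighted integral is then computed in the rescaled variable $\l_i(x-a_i)$, carefully enough to recover the precise logarithmic exponents in \eqref{R33}–\eqref{R34}.
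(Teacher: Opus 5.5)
Your reduction step (contraction or implicit function theorem giving $\|\ov v_\e\|\le c\|L_\e\|$) agrees with the paper. So does your use of $(v,u_0)=0$ and $(v,P\d_i)=0$ to replace $\int_\O\d_i^pv$ by $-\int_\O VP\d_i\,v$, and so do your computations producing $R_{33}(i)$ and $R_{34}(i)$.

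\textbf{The gap is in step (i).} If you bound $|W^{p-\e}-W^p|\le c\,\e\,W^p|\ln W|$ and integrate against $|v|$, you throw away the cancellation you need.
On the core of the $i$-th bubble, $W\approx\d_i\approx\l_i^{(n-2)/2}$, so $\e|\ln W|\approx\frac{n-2}{2}\e\ln\l_i$. Orthogonality makes $\int_\O\d_i^pv$ small, but not $\int_\O\d_i^p|v|$, which is of order $\|v\|$.
What you actually get is $O(\e\ln\l_i\,\|v\|)$. The hypothesis $\e\ln\l_i<\mu_0$ only turns this into $O(\mu_0\|v\|)$, not $O(\e\|v\|)$. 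So the term $\e$ in $R_{(\e,a,\l)}$ is not established.
This is not cosmetic. The later balancing arguments use $\|v_\e\|^2=O(\e^2+\cdots)$ against the principal term $\bar c_3\e$, with only $\e\ln\l\to0$ known, and $\e^2\ln^2\l$ need not be $o(\e)$.

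The missing idea is the one in \eqref{eq:k20}:
\begin{itemize}
\item Write $\d_i^{-\e}=(c_0\l_i^{(n-2)/2})^{-\e}(1+\l_i^2|x-a_i|^2)^{\e(n-2)/2}$.
\item Pull the \emph{constant} $c_0^{-\e}\l_i^{-\e(n-2)/2}$, which carries all of the $\e\ln\l_i$, outside the integral. It then multiplies $\int_\O\d_i^pv=O(R_{33}(i)\|v\|)$.
\item Estimate in absolute value only the $x$-dependent remainder $O\big(\e\,\d_i^p\ln(1+\l_i^2|x-a_i|^2)\big)$. Its $L^{2n/(n+2)}$ norm is scale invariant and bounded, which gives $O(\e\|v\|)$.
\end{itemize}
In your decomposition, this amounts to subtracting $\a_i^{p-\e}c_0^{-\e}\l_i^{-\e(n-2)/2}\d_i^p$ instead of passing from $W^{p-\e}$ to $W^p$.

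\textbf{A second, smaller error.} For $n\ge6$ you propose $|(a+b)^p-a^p-b^p|\le c(a^{p-1}b+ab^{p-1})$. For $n\ge7$ this cannot give the stated bubble--bubble term.
Indeed, $\int_\O(\d_i\d_j^{p-1})^{\frac{2n}{n+2}}=\int_\O\d_i^{\frac{2n}{n+2}}\d_j^{\frac{8n}{n^2-4}}$ is of order $\e_{ij}^{8n/(n^2-4)}$, since the smaller exponent wins. That is an $L^{2n/(n+2)}$ norm of order $\e_{ij}^{4/(n-2)}$, which is larger than $\e_{ij}^{\frac{n+2}{2(n-2)}}$ because $8<n+2$.
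Since $p\le2$ there, the correct inequality is $|(a+b)^p-a^p-b^p|\le c\min(a^{p-1}b,ab^{p-1})\le c(ab)^{p/2}$. This is what the paper uses, via the terms $(P\d_iP\d_j)^{\frac{p-\e}{2}}$ and $(P\d_iu_0)^{\frac{p-\e}{2}}$ in \eqref{k1}. Then $\int(\d_i\d_j)^{\frac{n}{n-2}}=O(\e_{ij}^{\frac{n}{n-2}}\ln\e_{ij}^{-1})$ yields exactly $\e_{ij}^{\frac{n+2}{2(n-2)}}\ln^{\frac{n+2}{2n}}\e_{ij}^{-1}$.

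With these two corrections, your outline coincides with the paper's proof.
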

 \begin{pf} From the implicit function theorem, in conjunction with estimate \eqref{KA15} and the fact that $q_\e$ is non-degenerate on the space $E_{a,\lambda,u_0} $, it follows that for small $\e$, there exists $\ov{v}_\e \in E_{a,\lambda,u_0}  $ with  $ \Vert \ov{v}_\e\Vert =O(\Vert L_\e \Vert)$, where $ L_\e $ is defined in \eqref{KA88}. Thus, the next step is to estimate $ \| L  _\e\|  $. \\
 Writing $ W:= \alpha_0 \, u_0 +\sum_{i=1}^{N} \alpha_i \, P\delta_i$ with $ P\delta_i :=  P\delta_{(a_i,\lambda_i)} $, 
 we obtain
\begin{align} 
  L_\e(v)   =  &\alpha_0^{p-\e} \int_{\O} u_0^{p-\e} v +  \sum_{i=1}^N \alpha_i^{p-\e} \int_{\O} P\delta_i^{p-\e} v   \notag \\
    &  + (\mbox{if }\, n\leq 5) O  \Big[ \sum_{j\neq i} \int_{\O} P\delta_i^{p-1-\e} P\delta_j  | v|  + \sum_{i=1}^N \int_\O P\delta_i ^{p-1-\e} u_0| v |  + \sum_{i=1}^N \int_\O P\delta_i  u_0^{p-1-\e} | v | \Big ]   \notag  \\
   &  + (\mbox{if } \, n\geq 6) O\Big[ \sum_{j\neq i} \int_{\O} (P\delta_i P\delta_j)^{\frac{p-\e}{2}}  | v|  + \sum_{i=1}^N \int_\O (P\delta_i u_0 )^{\frac{p-\e}{2}} | v | \Big] . \label{k1}
\end{align}
Notice that, since $ \e \ln \l_i \to 0 $ and $ \O $ is bounded, Taylor's expansion implies that 
 \begin{align}\label{eq:k20}
\d_i^{-\e} &= c_0^{-\e} \l_i^{\frac{ - \e (n-2)}{2}} \big(1+\frac{n-2}{2}\e \, \ln (1+\l_i^{2}|x-a_i|^2)\big)+O\big(\e^2 \, \ln^2 (1+\l_i^{2}|x-a_i|^2)\big)\notag\\
&=1+o(1).
 \end{align}
Invoking Estimate $E2$ in \cite{B}, Lemma $6.6$ in \cite{BVP}, Lemma \ref{A1}, and equation \eqref{eq:k20}, leads to 
\begin{align}
 & \int_{\O} (P\delta_i \, P\delta_j)^{\frac{p-\e}{2}}  | v | 
\leq c \Vert v \Vert \Big(\int_{\O} (\d_i \, \d_j)^{\frac{n}{n-2}}\Big)^{\frac{n+2}{2n}}
\leq c  \e_{ij}^{\frac{n+2}{2(n-2)}}\ln^{\frac{n+2}{2n}} (\e_{ij}^{-1} ) \| v \|,  \label{k3} \\
 &  \int_{\O}P\delta_i^{\frac{p-\e}{2}} \, u_0^{\frac{p-\e}{2}} | v | \leq c \int_\O \delta_i^{{(n+2) } / [2(n-2)]} | v | 
  \leq  c \| v \| \frac{( \ln \lambda_i)^{ (n+2) / (2n)}}{\lambda_i^{ (n+2) / {4}}}. \label{k4}
  \end{align}
  Observing that  $ 1< \frac{2n}{n+2} < \frac{8n}{n^2-4} $ for $ n \leq 5 $, we may invoke Estimate $E2$ in \cite{B}, Lemma $6.6$ in \cite{BVP} and Lemma \ref{A1} to estimate the remainder terms  in \eqref{k1}: 
  \begin{align}
 & \int_{\O} P\delta_i^{p-1-\e} \, P\delta_j | v | \leq c \| v \| \Big( \int_\O \delta_i^{ 8n / (n^2 -4) } \, \delta_j ^{ 2n / (n+2 )} \Big) ^{ (n+2) / (2n) } 
\leq c \| v \| \e_{ij}, \label{k5} \\ 
 & \int_{\O} P\delta_i^{p-1-\e} \, u_0 | v | \leq c \| v \| \Big( \int_{\O} \delta_i^{ 8n / ( n^2-4) } \Big)^{ ( n+2 ) / {2n}} \leq c \| v \| R_{33}(i),  \label{k6} \\
 & \int_{\O} P\delta_i \, u_0^{p-1-\e} \, | v | \leq c \| v \| \Big( \int_{\O} \delta_i^{ 2n / ( n + 2 ) } \Big)^{ ( n+2 ) / {2n}}  \leq c \| v \| R_{33}(i) . \label{k7}
\end{align} 
To handle the first integral on the right-hand side of \eqref{k1}, we proceed as follows:
\be \label{k8}
\int_{\O} u_0^{p-\e} \, v  =  (u_0,v) +O(\int_{\O} \e | v | )  = O(\e \| v \|) . 
\ee 
To handle the second integral on the right-hand side of \eqref{k1}, let $ \theta_i := \d_i - P \d_i$, we make use of \eqref{eq:k20}, yielding:
\begin{align*}
\int_{\O}& P\delta_i^{p-\e}v =\int_\O\d_i^{p-\e}v+O\big(\int_\O\d_i^{p-1}\theta_i |v|\big)\notag\\
& = c_0^{-\e} \l_i^{ - \frac{\e (n-2) }{2}} \int_{\O} \delta_i^{p}v+ O \Big(  \e \int_{\O} \delta_i^{p} \ln(1+\lambda_{i}^2 | x-a_i| ^{2}) | v | \Big)+O\Big(\Big(\int_\O (\d_i^{p-1}\theta_i)^{\frac{2n}{n+2}}\Big)^{\frac{n+2}{2n}} \|v\|\Big) . 
\end{align*}
However, because $ v \in E_{a,\lambda,u_0} $, it follows that
\begin{align*}
\int_{\O} \delta_i^{p}v &  = -\int_{\O} V\, P\delta_i \, v = O\Big(\int_{\O} \delta_i | v | \Big)= O\Big(   \| v \| \big( \int _\O \delta_i ^{ 2n / (n+2 ) } \big) ^{ (n+2 ) / (2n) }\Big)=O\Big(   R_{33}(i) \| v \| \Big),
\end{align*}
 where  $ R_{33}(i) $ is defined in \eqref{R33}.  \\
Note that, writing $B_i=B(a_i,d_i)$ and $B_i^c=\O\setminus B_i$ with $d_i=d(a_i,\partial\O)$, we observe that
\begin{align*}
\Big(\int_{B_i^c} (\d_i^{p-1}\theta_i)^{\frac{2n}{n+2}}\Big)^{\frac{n+2}{2n}}&\leq \Big(\int_\O \d_i^{p+1}\Big)^{\frac{n+2}{2n}}\leq \frac{c\qquad}{(\l_id_i)^{(n+2)/2}},\\
\Big(\int_{B_i} (\d_i^{p-1}\theta_i)^{\frac{2n}{n+2}}\Big)^{\frac{n+2}{2n}}&\leq |\theta_i|_\infty \Big(\int_{B_i} \d_i^{\frac{8n}{n^2-4}}\Big)^{\frac{n+2}{2n}}\leq cR_{34}(i), 
\end{align*}
where $R_{34}(i)$ is defined in \eqref{R34}. 
We conclude that
\begin{equation}
\label{k11a}
\Big| \int_{\O} P\delta_i^{p-\e} v \Big| \leq c \| v \| \Big( R_{33}(i )+ R_{34}(i)+ \e \Big) . 
\end{equation}
Gathering together \eqref{k1}, \eqref{k3}–\eqref{k8}, and \eqref{k11a}, the desired conclusion follows directly.
\end{pf}

\section{Precise Estimates of the Gradient of the Functional}

In this section, we provide precise estimates for the gradient of the functional $I_\e$ defined in \eqref{funct}. To this end, we observe that for $u, h \in H^1_0(\O)$, the following relation holds:
\begin{align}\label{k12}
\left\langle I_{\e}^{\prime}( u),  h \right\rangle=\int_{\O}\n u.\n  h + \int_\O V\,u\, h - \int_{\O}| u|^{p-1-\e} u\,h .  
\end{align}
In \eqref{k12}, we consider the decomposition
$$
u =\a_0 \,u_0+\sum_{i=1}^{N}\a_i\, P\d_{(a_i,\lambda_i)}+\bar{v}_{\e}:=\a_0 \,u_0+ \sum_{i=1}^N \a_iP\d_i + \bar{v}_{\e}:=W+\bar{v} _\e,
$$
 where $\bar{v}_{\e}$ is defined in Proposition \ref{p2}. For simplicity of notation, we will henceforth write $v_\e$ in place of $\bar{v}_{\e}$.\\
 We will take $ h = \psi_i \in \lbrace P\d_{(a_i,\l_i)} ,\l_i\partial P\d_{(a_i,\lambda_i)}/\partial\l_i,\l_i^{-1}\partial P\d_{(a_i,\l_i)}/\partial a_i \rbrace $ with $1\leq i\leq N$.
 Our goal is to derive precise estimates for  $\left\langle I_{\e}^{\prime}( u), \psi_i\right\rangle $ for $1\leq i\leq N$. To this end, using \eqref{k12} and the assumption that $ v \in E_{a,\lambda,u_0} $, we deduce that  
\begin{align}\label{lion1}
\Big\langle I^{\prime}_\e (u),&\psi_i\Big\rangle   =  \a_0 (u_0,\psi_i)+\a_i \int_{\O} V \, P\d_i \, \psi_i +\sum_{j\neq i} \a_j \,\int_{\O}  V\, P\d_j \, \psi_i\notag\\
&+ \a_i \int_{\O} \n P\d_i \n \psi_i+ \sum_{j\neq i} \a_j \int_{\O} \n P\d_j \n \psi_i- \int_{\O} |u|^{p-1-\e} \, u \psi_i . 
\end{align}

 \subsection{Asymptotic Estimate in the Gluing Parameter}
 
 We begin by estimating the gradient of $I_\e$ with respect to the parameter $\a_0$. 
 \begin{pro}\label{rp41}
Let $ n\geq 3 $, $\mu_0 $ positive small,  $ (\a,\l,a)\in  \mathbf{B}(N,\mu_0) $, $ v \in E_{a, \l , u_0} $, and  $ u=\a_0 \, u_0 +\sum_{i=1}^{N} \a_i \, P\d_{(a_i,\l_i)} + {v} $. Then, the following estimate holds:
\begin{align*}
 & \left\langle I^{\prime}_\e(u),u_0\right\rangle = \a_0 c_1 \Big( 1 - \a_0^{p-1-\e} \Big) + O\Big(R_{\a 0}\Big), \qquad \mbox{ where } \\
  &  c_1 := \int_{\O} u_0^{p+1}(x) \, dx \qquad  \mbox{ and } \qquad R_{\a 0}= \e+\Vert {v} \Vert^{2}+\sum_{i=1}^N \Big( \frac{1}{\l_i^{4}}+ \frac{1}{\l_i^{ (n-2) / {2}}}  \Big). \end{align*}
\end{pro}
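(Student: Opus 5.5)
We test the equation \eqref{k12} against $h=u_0$ and use the structure of $u=\a_0u_0+\sum_i\a_iP\d_i+v$.

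\textbf{Linear part.} Since $v\in E_{a,\l,u_0}$, we have $(v,u_0)=0$. Since $u_0$ solves $(\mathcal{P}_{V,0})$, we have $(u_0,u_0)=\int_\O u_0^{p+1}=c_1$ and
\[
(P\d_i,u_0)=\int_\O u_0^{p}P\d_i+\int_\O (V u_0)\,(P\d_i-P\d_i)\,,
\]
so more precisely $(P\d_i,u_0)=\int_\O u_0^p P\d_i=O\big(\int_\O\d_i\big)=O(\l_i^{-(n-2)/2})$. Hence the linear part of $\langle I'_\e(u),u_0\rangle$ equals $\a_0c_1+O\big(\sum_i\l_i^{-(n-2)/2}\big)$.

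\textbf{Nonlinear part.} It remains to expand $\int_\O|u|^{p-1-\e}u\,u_0$. Write $W_0=\a_0u_0$ and $Z=\sum_i\a_iP\d_i$. We use the elementary inequality
\[
\big||A+B|^{p-1-\e}(A+B)-|A|^{p-1-\e}A-(p-\e)|A|^{p-1-\e}B\big|\le c\big(|B|^{p-\e}+|A|^{p-2-\e}B^2\big),
\]
the second term being used only when $p>2$; for $p\le2$ the bound $c|B|^{p-\e}$ suffices.

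We first split off $v$ and then the bubbles. The term linear in $v$ is
\[
(p-\e)\int_\O (W_0+Z)^{p-1-\e}v\,u_0.
\]
Its $u_0$-part is $(p-\e)\a_0^{p-1-\e}\int_\O u_0^{p-\e}v$. Since $\int_\O u_0^{p}v=(u_0,v)=0$ and $u_0^{-\e}=1+O(\e)$ (as $u_0$ is bounded, it suffices that $\e\, u_0^p|\ln u_0|$ is bounded), this part is $O(\e\|v\|)=O(\e^2+\|v\|^2)$. The cross part involves $\int_\O \d_i^{p-1}u_0|v|$ or $\int_\O\d_i u_0^{p-1}|v|$, and is bounded as in \eqref{k6}–\eqref{k7} by $\|v\|R_{33}(i)\le\|v\|^2+R_{33}(i)^2$. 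The quadratic remainder in $v$ is $O(\|v\|^2)$ by Hölder and Sobolev, since $u_0$ is bounded.

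Next comes the pure $W_0+Z$ part. The principal term is $\a_0^{p-\e}\int_\O u_0^{p+1-\e}=\a_0^{p-\e}c_1+O(\e)$. The term linear in $Z$ is $\int_\O u_0^{p-\e}P\d_i=O(\l_i^{-(n-2)/2})$. The higher-order terms in $Z$ are bounded by $\int_\O \d_i^{p}u_0+\int_\O\d_i^2u_0^{p-1}$, which is $O(\l_i^{-(n-2)/2}+\l_i^{-2}\cdot(\text{log if } n=4))$. Bubble–bubble interactions only produce higher-order versions of these quantities.

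\textbf{Conclusion and main obstacle.} Collecting terms gives $\a_0c_1(1-\a_0^{p-1-\e})+O(R_{\a0})$. The main obstacle is checking that all error terms fit inside $R_{\a0}$, in particular the $\int\d_i^2u_0^{p-1}$ term. For $n\ge5$ it is $O(\l_i^{-2})$, and for $n\ge 6$ this dominates $\l_i^{-(n-2)/2}$, so the $\l_i^{-4}$ in $R_{\a0}$ would have to absorb it; this seems too small. I would first try to sharpen the bound using the cancellation $\int\d_i^{p}u_0\sim\l_i^{-(n-2)/2}u_0(a_i)$ together with a Taylor split of $|W_0+Z|^{p-1}(W_0+Z)$ on $B(a_i,\l_i^{-1/2})$ and its complement. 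If that cannot be pushed through, I would accept an extra $\l_i^{-2}$ type term (or $R_{33}(i)^2$), which is anyway negligible in the applications.
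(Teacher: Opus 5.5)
\textbf{Overall.} Your route is the same as the paper's. You test against $u_0$, use $(v,u_0)=0$ and $(P\d_i,u_0)=\int_\O u_0^pP\d_i=O(\l_i^{-(n-2)/2})$, split off $v$, and then expand in the bubbles. However, the ``main obstacle'' you end with does not exist. Your fallback of accepting an extra $\l_i^{-2}$ would prove a strictly weaker statement than the one asked, since for $n\ge7$ the term $\l_i^{-2}$ is not $O(\l_i^{-4}+\l_i^{-(n-2)/2})$.

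\textbf{Where the obstacle comes from.} You expand $(\a_0u_0+Z)^{p-\e}$ to second order in $Z$. This is unnecessary, because the linear term $\int_\O u_0^{p-\e}P\d_i$ is already of the size of the allowed error. The paper (see \eqref{rm6}) uses only the first-order bound $0\le(a+b)^q-a^q\le c\,(a^{q-1}b+b^q)$ for $a,b\ge0$ and $q=p-\e\ge1$, with $a=\a_0u_0$ and $b=Z$. The error is then at most $c\sum_i\int_\O(u_0^{p-\e}P\d_i+P\d_i^{p-\e}u_0)\le c\sum_i\int_\O(\d_i+\d_i^{p})=O(\sum_i\l_i^{-(n-2)/2})$. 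No cancellation and no splitting on $B(a_i,\l_i^{-1/2})$ is needed.

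\textbf{Your own inequality already suffices.} The term $\int_\O\d_i^2u_0^{p-1}$ only arises when $p>2$, i.e.\ $n\le5$. There it is $O(\l_i^{-1})$, $O(\l_i^{-2}\ln\l_i)$ and $O(\l_i^{-2})$ for $n=3,4,5$ respectively, each of which is $O(\l_i^{-(n-2)/2})$. For $n\ge6$ you yourself drop the $B^2$ term, so the claim that it ``dominates for $n\ge6$'' is a bookkeeping error. The $\l_i^{-4}$ in $R_{\a0}$ comes from $\|v\|R_{33}(i)\le\|v\|^2+R_{33}(i)^2$, not from any bubble-squared term.

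\textbf{A smaller slip in the $v$-remainder.} For $n\ge6$ you bound the remainder of the $v$-expansion by $c|v|^{p-\e}$. After H\"older and Sobolev this gives only $O(\|v\|^{p-\e})$, which is not $O(\|v\|^2)$ when $p-\e<2$. What works is the following. For $1<q\le2$ one also has $\big||W+v|^{q-1}(W+v)-W^q-qW^{q-1}v\big|\le c\,W^{q-2}v^2$. Since $W\ge\a_0u_0>0$ and the test function is $u_0$, you get $u_0W^{q-2}\le c\,u_0^{q-1}\le c$. Hence this remainder is $O(\int_\O v^2)=O(\|v\|^2)$. This is where the weight $u_0$ genuinely matters; the paper writes $O(\|v\|^2)$ in \eqref{rm3} without comment. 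With these two corrections your argument gives exactly the stated estimate.
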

\begin{pf}
Notice that, using Lemma \ref{A1}, we obtain 
\be\label{r41}
(P\d_i,u_0)=\int_\O u_0^p P\d_i\leq c\int_\O \d_i\leq \frac{c}{\l_i^{ (n-2) / {2}}}.
\ee
Now, using \eqref{eq:k20}, we get
 \begin{align} \label{rm3} 
\int_{\O} \vert u \vert^{p-1-\e} \, u \, u_0  = & \int_{\O} W^{p-\e} \, u_0 + (p-\e)\int_{\O}W^{p-1-\e} \, {v} \, u_0  +O(\| v\|^2).
\end{align}
To proceed further, we first observe that for  $\beta>0$, there exists $c>0$ independent of $\e$ such that
\be\label{ba}
|u_0^{\beta -\e}(x)- u_0^\beta (x)|\leq c\, \e\quad \forall \, x\in\O. 
\ee
Second, using \eqref{eq:k20}, \eqref{ba} and Lemma \ref{A1}, we derive
\begin{align}\label{rm5}
\int_{\O} W^{p-1-\e} \, {v} \, u_0 &= \a_0^{p-1-\e} \int_{\O} u_0^{p-\e} \, {v} + \sum_{i=1}^N O \Big(  \int_{\O} u_0^{p-1-\e} P\d_i \vert v\vert + \int_{\O} P\d_i^{p-1-\e} \vert {v} \vert\Big)\notag\\
&=   O\Big(\e \Vert {v} \Vert + \| v\|\sum_{i=1}^N R_{33}(i)\Big), 
\end{align} 
where we have used the fact that $ {v} \in E_{a,\lambda,u_0} $, and where $R_{33}$ is defined in \eqref{R33}.

Now, arguing as in \eqref{rm5}, we get
\begin{align}\label{rm6}
\int_{\O}  W^{p-\e} \, u_0 & = \a_0^{p-\e} \int_{\O} u_0^{p+1-\e} + \sum_{i=1}^N O \Big(\int_{\O} u_0^{p-\e} \, P\d_i +  \int_{\O} P\d_i^{p-\e} \, u_0 \Big)\notag \\
&   = \a_0^{p-\e} c_1 + O \Big(\e+\sum_{i=1}^{N} \frac{1}{\l_i^{ (n-2) / {2}}}\Big) . 
\end{align}
Putting together \eqref{r41}-\eqref{rm6} in \eqref{k12}, and using the fact that $v\in E_{a,\l, u_0}$, we conclude the proof of the desired result.
\end{pf}

 Our next goal is to estimate the gradient of $I_\e$ with respect to $ \a_i $, for $1\leq i \leq N$. 
\begin{pro}\label{rp42}
Let $ n\geq 3 $, $\mu_0 $ positive small,  $ (\a,\l,a)\in  \mathbf{B}(N,\mu_0) $, $ v \in E_{a, \l , u_0} $, and  $ u=\a_0 \, u_0 +\sum_{i=1}^{N} \a_i \, P\d_{(a_i,\l_i)} + {v} $. Then, for $ 1\leq i\leq N $,  the following estimate holds:
\begin{align*}
\left\langle I^{\prime}_\e(u),P\d_{(a_i,\l_i)}\right\rangle = \a_i \, S_n \Big(1-\a_i^{p-1-\e} c_0^{-\e}\l_i^{ { - \e(n-2) } / {2}} \Big)+O\Big(R_{\a i} \Big),
\end{align*}
where $ S_n = c_0^{{2n} / (n-2)} \int_{\mathbb{R}^{n}} \frac{dx}{(1+\vert x \vert^{2})^{n}} $, $\bar{c}_1=c_0^{p+1}\int_{\R^n}\frac{\ln (1+|x|^2)}{(1+|x|^2)^n}dx$, and where, denoting $d_i=d(a_i,\partial\O)$, the term $R_{\a i} $ is defined as
$$ R_{\a i}  :=  \Vert v \Vert^{2}+\e +\frac{1}{(\l_id_i)^{n-2}}  + \frac{1}{\l_i^{2}} + \sum_{j=1}^N \Big[ \frac{1}{\l_j^{ (n-2) / {2}}} + \frac{1}{\l_j^4} \Big]  +  \sum_{j\neq i } \e_{ij } . $$ 
\end{pro}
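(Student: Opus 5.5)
We start from identity \eqref{lion1} with $\psi_i = P\d_i$ and estimate each term separately.

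\textbf{Linear terms.} Since $-\D P\d_i = \d_i^p$, we have $\int_\O \n P\d_i\cdot\n P\d_i = \int_\O \d_i^p P\d_i = S_n + O\big((\l_i d_i)^{2-n}\big)$. This uses $P\d_i = \d_i - \theta_i$ with $0\le\theta_i\le c\,\l_i^{(2-n)/2} d_i^{2-n}$ (Lemma \ref{A1}), together with the fact that the integral of $\d_i^{p+1}$ outside $\O$ is $O((\l_i d_i)^{-n})$.

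The potential term satisfies $\int_\O V (P\d_i)^2 \le c\int_\O \d_i^2$, which is $O(\l_i^{-2})$ for $n\ge5$, $O(\ln\l_i/\l_i^2)$ for $n=4$ and $O(\l_i^{-1})$ for $n=3$. In every case it is dominated by $\l_i^{-2}+\l_i^{(2-n)/2}$, since for $n=3$ we have $\l_i^{-1}\le \l_i^{-1/2}$, and for $n=4$ the logarithm is absorbed into $\l_i^{-1}$.

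The interaction terms $\a_j\int_\O(\n P\d_j\cdot\n P\d_i + V P\d_jP\d_i)$ are $O(\e_{ij})$ by the classical estimate $\int \d_j^p\d_i = O(\e_{ij})$ together with Cauchy--Schwarz for the $V$ part. Finally, $\a_0(u_0,P\d_i)=O(\l_i^{(2-n)/2})$ by \eqref{r41}.

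\textbf{Nonlinear term.} We expand $\int_\O|u|^{p-1-\e}u\,P\d_i$ around $W$:
\[
\int_\O W^{p-\e}P\d_i + (p-\e)\int_\O W^{p-1-\e} v\,P\d_i + O(\|v\|^2).
\]

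The linear-in-$v$ integral is split into the $\a_i P\d_i$ part and the rest. For the main part, $\int P\d_i^{p-\e}v$ is estimated as in the proof of \eqref{k11a}: it equals $c_0^{-\e}\l_i^{-\e(n-2)/2}\int\d_i^p v$ plus errors. Using $v\in E_{a,\l,u_0}$ we get $\int \d_i^p v=-\int V P\d_i v$, which is $O(R_{33}(i)\|v\|)$.

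The problematic errors are those of order $\e\|v\|$ and $R_{34}(i)\|v\|$, since only $\|v\|^2$ appears in $R_{\a i}$. We handle them by Young's inequality, for instance $\e\|v\|\le \e^2+\|v\|^2$ and $R_{34}(i)\|v\|\le R_{34}(i)^2+\|v\|^2$. The quantity $R_{34}(i)^2$ is dominated by $(\l_id_i)^{2-n}$, and $R_{33}(i)^2$ by $\l_i^{-2}$ and $\l_i^{(2-n)/2}$ in the respective dimensions. The cross terms involving $u_0$ or $P\d_j$ are handled by \eqref{k5}--\eqref{k7} and \eqref{k3}--\eqref{k4}, again combined with Young's inequality.

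\textbf{Main step.} The core is $\int_\O W^{p-\e}P\d_i$. On the region where $\a_iP\d_i$ dominates we expand $W^{p-\e}=(\a_iP\d_i)^{p-\e}+O\big((P\d_i)^{p-1}(u_0+\sum_{j\ne i}P\d_j)\big)+\dots$. The error integrals give $\int\d_i^{p}u_0 = O(\l_i^{(2-n)/2})$ and $O(\e_{ij})$. When $n\ge6$ we use the ``$(fg)^{p/2}$'' bounds together with $\int (\d_iu_0)^{p/2}\d_i$-type estimates, which remain below $\l_i^{-2}+\l_i^{-4}$ after the usual computations.

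The main piece is $\a_i^{p-\e}\int P\d_i^{p+1-\e}$. Using \eqref{eq:k20} it equals
\[
\a_i^{p-\e}c_0^{-\e}\l_i^{-\e(n-2)/2}\Big(\int\d_i^{p+1} - (p+1)\int\d_i^p\theta_i\Big) + O(\e),
\]
where the logarithmic correction contributes $\e\bar c_1=O(\e)$ and is absorbed into $R_{\a i}$. Combining this with the gradient term $S_n$ gives $\a_iS_n(1-\a_i^{p-1-\e}c_0^{-\e}\l_i^{-\e(n-2)/2})$. The leftover $\theta_i$-terms are of order $(\l_id_i)^{2-n}$, since the boundary contributions from the gradient and nonlinear parts do not cancel exactly.

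\textbf{Expected obstacle.} The delicate point is the uniformity in $\e$ of the bound $W^{-\e}=1+o(1)$ when $u_0$ and $P\d_i$ are comparable, and the handling of the $\e$-powers inside the interaction integrals. I would treat this with \eqref{ba} and \eqref{eq:k20}, splitting $\O$ into the regions $\{\a_iP\d_i\ge \a_0u_0+\sum_{j\ne i}\a_jP\d_j\}$ and its complement.
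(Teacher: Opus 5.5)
Your proposal is correct and follows the paper's proof essentially step by step. You use the same splitting through \eqref{lion1}, the same first-order expansion of the nonlinearity in $v$, and the same treatment of the linear-in-$v$ term as in \eqref{k11a}; your explicit Young's inequality is exactly what the paper leaves implicit between \eqref{rm11} and \eqref{rm17}. You also reduce $\int_\O W^{p-\e}P\d_i$ to $\a_i^{p-\e}\int_\O P\d_i^{p+1-\e}$ and expand it via \eqref{eq:k20}, as the paper does.

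The one step to repair is the potential part of the interaction term. For $n\ge5$, Cauchy--Schwarz only gives $\int_\O V P\d_jP\d_i=O(\|\d_i\|_{L^2}\|\d_j\|_{L^2})=O((\l_i\l_j)^{-1})$. For $n\ge7$ this is not always dominated by $R_{\a i}$: take well-separated centres and $\l_j=\l_i^{\theta}$ with $2/3<\theta<1$. Use instead the direct bound $\int_\O\d_i\d_j=O(\e_{ij})$, as in \eqref{k51}.
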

\begin{pf}
 By applying Estimate $(B2)$ in \cite{R-G} and Lemma \ref{A1}, we derive
  \begin{align}\label{k50} 
  (P\d_i,P\d _i )&= S_n+ O\Big(\frac{1}{(\l_id_i)^{n-2}}+ (\mbox{if } \, n=3)\frac{1}{\l_i}+(\mbox{if } \, n=4)\frac{\ln \l_i}{\l_i^{2}} +(\mbox{if } \, n\geq 5)\frac{1}{\l_i^{2}} \Big). 
\end{align}
Moreover, for $ j \neq i $,  Lemma 6.6 in \cite{BVP} and  Lemma \ref{A1} yield that 
\begin{align}\label{k51}
(P\d_j,P\d_i)&= \int_{\O} \n P\d_j \n P\d_i+\int_{\O} V \, P\d_j \,P\d_i=-\int_\O \D P\d_iP\d_j +\int_{\O} V \, P\d_j \,P\d_i = O\Big
(\e_{ij}\Big).
\end{align}
However,  we have
\begin{align} \label{rm7} 
\int_{\O} \vert u \vert^{p-1-\e} \, u \, P\d_i = \int_{\O} W^{p-\e} \, P\d_i + (p-\e)\int_{\O}W^{p-1-\e} \, {v} \, P\d_i +O(\| v\|^2).
\end{align}
Furthermore, as in \eqref{rm5}, we have
\begin{align}\label{rm8}
\int_{\O} W^{p-1-\e} \, {v} \, P\d_i & = \int_{\O} \big(\sum_{j=1}^N \a_jP\d_j\big)^{p-1-\e} \, {v}P\d_i +O\Big(\| v  \|\sum_{j=1}^N R_{33}(j)\Big).
\end{align}
and, using \eqref{k11a}, we get 
\begin{align}
  \int_{\O} \big(\sum_{j=1}^N \a_jP\d_j\big)^{p-1-\e} \, & {v}P\d_i  = \a_i^{p-1-\e}\int_{\O}P\d_i^{p-\e} v  + (\mbox{if } n \geq 6 ) \sum_{j \neq i} O \Big(  \int_\O (P \d_i P \d_j)^{\frac{p-\e}{2}} | v | \Big) \notag  \\
   &  \quad + (\mbox{if } n \leq 5 ) \sum_{j \neq i} O \Big( \int_\O P\d_i ^{p-1\e} P \d_j | v | +  \int_\O P\d_j ^{p-1 - \e} P \d_i | v | \Big) \notag  \\
 & = O \Big( \| v \| \Big[ \e + R_{33}(i) + R_{34}(i) + \sum_{j\neq i} \big(\e_{ij}^{\frac{n+2}{2(n-2)}}(\ln \e_{ij}^{-1})^{\frac{n+2}{2n}} + \e_{ij} \big) \Big] \Big) . \label{rm11}
 \end{align}

We now turn to the first integral on the right-hand side of \eqref{rm7}. By arguing as in \eqref{rm5} and then using Lemma $A.1$ in \cite{BCH} and estimate \eqref{eq:k20}, we find that
\begin{align}\label{kkk4*}
\int_{\O} W^{p-\e} \, P\d_i 
 &=   \int_{\O} \big(\sum_{j=1}^N \a_jP\d_j\big)^{p-\e} \, P\d_i + O\Big( \sum_{j=1}^N \frac{1}{\l_j^{(n-2)/2}}\Big).
\end{align}
Concerning the integral in the right hand-side of \eqref{kkk4*},  we note that
\begin{align}\label{rm12}
 \int_{\O}\big(\sum_{j=1}^N\a_j P\d_j)^{p-\e}P\d_i   
&=\a_i^{p-\e}\int_{\O}( P\d_i)^{p+1-\e}+
 O\Big(\sum_{j\neq i}\e_{ij}\Big),
\end{align}
where  we have used  Lemma 6.6 from \cite{BVP} and \eqref{eq:k20}.\\
 Now, using \eqref{eq:k20},  we see that
\begin{align}\label{r19}
\int_{\O}P\d_i^{p+1-\e}
&=c_0^{-\e} \l_i^{\frac{ - \e (n-2)}{2}} \int_{\R^n}\d_i^{p+1} +O\Big(\e \int_{\R^n}\d_i^{p+1} \ln (1+\l_i^{2}|x-a_i|^2)\Big)\notag\\
&\qquad  +  O\Big(|\theta_i|_\infty\int_\O\d_i^p+\int_{\R\setminus \O}\d_i^{p+1}\Big)\notag\\
&=c_0^{-\e} \l_i^{\frac{ - \e (n-2)}{2}}S_n+ +O\big(\e+\frac{1}{(\l_id_i)^{n-2}}\big).
\end{align}
Putting together \eqref{rm12} and \eqref{r19}, we obtain
\begin{align}\label{rm14b}
 \int_{\O}\big(\sum_{j=1}^N\a_j P\d_j\big)^{p-\e}P\d_i 
  = c_0^{-\e} \a_i^{p-\e}\l_i^{\frac{ - \e (n-2)}{2}}S_n +O\Big(\e+\frac{1}{(\l_id_i)^{n-2}}+\sum_{j\neq i} \e_{ij}\Big). 
\end{align}
Combining the results in \eqref{kkk4*} and \eqref{rm14b}, we arrive at
\begin{align}\label{rm16}
\int_{\O} W^{p-\e} \, P\d_i 
 = c_0^{-\e} \a_i^{p-\e}\l_i^{\frac{ - \e (n-2)}{2}}S_n +O\Big(\e+\frac{1}{(\l_id_i)^{n-2}}+\sum_{j\neq i} \e_{ij}+ \sum_{j=1}^N \frac{1}{\l_j^{(n-2)/2}}\Big).
\end{align}
Using \eqref{rm7}, together with \eqref{rm11} and \eqref{rm16}, we derive:
\begin{align} \label{rm17} 
\int_{\O}  \vert  u \vert^{p-1-\e} \, u \, P\d_i = & c_0^{-\e}\a_i^{p-\e} \l_i^{\frac {-\e (n-2)}{2}}S_n \notag \\
 &  + O\Big(\|v\|^2 +\e+ \frac{1}{(\l_id_i)^{n-2}}+\sum_{j\neq i}\e_{ij}+\sum_{j=1}^N \Big[ \frac{ 1 } { \l_j ^{ (n-2) / {2 } }}  + \frac{1}{\l_j^4} \Big] \Big) .
\end{align} 
Our Proposition follows directly from the combination of \eqref{k12}, \eqref{r41}, \eqref{k50}, \eqref{k51} and \eqref{rm17}.
 \end{pf}
 
 \subsection{Asymptotic Estimate in the Concentration Rate}
 We now aim to analyze the gradient of $I_\e$ with respect to $ \l_i $, for $1\leq i \leq N$. 

\begin{pro}\label{rp43}
Let $ n\geq 3 $, $\mu_0 $ positive small,  $ (\a,\l,a)\in  \mathbf{B}(N,\mu_0) $, $ v \in E_{a, \l , u_0} $, and  $ u=\a_0 \, u_0 +\sum_{i=1}^{N} \a_i \, P\d_{(a_i,\l_i)} + {v} $. Then, for $ 1\leq i\leq N $,  the following estimate holds:
\begin{align*}
\Big\langle  & I^{\prime}_\e(u),\l_i \frac{\partial P\d_i}{\partial \l_i}\Big\rangle = \a_ic_0\bar{c}_2\frac{(n-2)}{2}\frac{H(a_i,a_i)}{\l_i^{n-2}}\left(1-2c_0^{-\e}\l_i^{-\frac{\e(n-2)}{2}}\a_i^{p-1-\e}\right)
  -\a_i \, V(a_i)  \frac{ \gamma_n }{\l_i^{2}}\\
  &+ \bar{c}_3 \, \e \, c_0^{-\e} \a_i^{p-\e} \l_i^{\frac{-\e (n-2)}{2}}+ \a_0 (1-\a_0 ^ {p-1-\e}) (u_0,\l_i\frac{\partial P\d_i}{\partial \l_i}) + \, c_0^{-\e}\a_i^{p-1-\e} \l_i^{\frac{-\e(n-2)}{2}}  \a_0 \bar{c}_2 \frac{(n-2)}{2}\frac{ u_0(a_i) }{\l_i^{ \frac{n-2} {2}}}\\
  &+ \sum_{j\neq i} \a_j\, c_0\bar{c}_2 \left(\l_i \frac{\partial \e_{ij}}{\partial \l_i}+\frac{(n-2)}{2}\frac{H(a_i,a_j)}{(\l_i\l_j)^{\frac{n-2}{2}}}\right) \Big(1- \sum_{ \ell = i, j } c_0^{-\e}\a_\ell ^{p-1-\e} \, \l_\ell ^{ - \frac{\e (n-2) }{2}}  \Big) + R_\l(i),   
\end{align*}

 \begin{align*}
\mbox{ where } &  R_\l(i) =O\Big(\e^2 +\| v\|^2   + \sum_{j\neq k} \big[\e_{kj}^{\frac{n}{n-2}} \ln ( \e_{kj}^{-1}) + \e_{kj}^2 \big] +  \sum_{j=1}^N \big[ \frac{ \ln \l_j }{ \l_j ^{n/2}} + \frac{1}{\l_j^4} + \frac{\ln ( \l_j d_j) }{(\l_jd_j)^{n}} \big]  \Big) \\
  & \qquad +(\mbox{if}\, n=3) \sum_{j=1}^N O\Big( \frac{1}{\l_j}+ \frac{1}{(\l_jd_j)^2}\Big), \\ 
  & \bar{c}_2 = \int_{\mathbb{R}^{n}} \frac{ c_0^{p}}{(1+\vert x \vert^{2})^{\frac{n+2}{2}}} dx \, \, ; \quad  \bar{c}_3 = \frac{(n-2)^{2}}{4} \int_{\mathbb{R}^{n}} \frac{c_0^{p+1} (\vert x \vert^{2}-1)\ln(1+\vert x \vert^{2})}{(1+\vert x \vert^{2})^{n+1}} \, dx  , \\ 
&  \gamma_3 = \gamma_4=0, \, \,  \gamma_n = \frac{(n-2)}{2} \, c_0^{2} \int_{\mathbb{R}^{n}} \frac{|x|^{2}-1}{(1+|x|^{2})^{n-1}} \, dx \mbox{  if } n\geq5  . \end{align*}
\end{pro}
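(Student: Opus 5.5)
Write $\psi_i:=\l_i\,\partial P\d_i/\partial\l_i$. The plan is to expand each term of \eqref{lion1} with $h=\psi_i$ separately, in the same way as in Propositions \ref{rp41} and \ref{rp42}. The linear part splits into three pieces: $\a_i(P\d_i,\psi_i)$, $\sum_{j\neq i}\a_j(P\d_j,\psi_i)$, and $\a_0(u_0,\psi_i)$.

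\textbf{Linear terms.} For the self-interaction, write $(P\d_i,\psi_i)=\int_\O \d_i^p\psi_i+\int_\O V P\d_i\psi_i$. For the first integral, use the expansion $P\d_i=\d_i-c_0H(a_i,\cdot)/\l_i^{(n-2)/2}+\cdots$ (Estimate $(B2)$ in \cite{R-G}, Lemma \ref{A1}). Together with $\int_{\R^n}\d_i^p\,\l_i\partial_{\l_i}\d_i=0$, this gives the term $c_0\bar c_2\frac{n-2}{2}H(a_i,a_i)\l_i^{2-n}$ plus remainders of order $(\l_id_i)^{-n}\ln(\l_id_i)$. For the potential term, Taylor expand $V$ at $a_i$. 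The zeroth-order piece gives $V(a_i)\int \d_i\,\l_i\partial_{\l_i}\d_i$, which after scaling equals $-\gamma_n\l_i^{-2}$ for $n\ge5$. The first-order piece vanishes by oddness. The second-order piece gives $O(\l_i^{-4})$ when $n\ge 7$ (with a log when $n=6$), and the $n\le4$ cases are absorbed into the $n=3$ remainders or give $\gamma_4=0$ up to $O(\ln\l/\l^2)$. I will check this bookkeeping carefully but not in detail here. For $j\ne i$, the same computation with Lemma 6.6 of \cite{BVP} produces $c_0\bar c_2(\l_i\partial_{\l_i}\e_{ij}+\frac{n-2}{2}H(a_i,a_j)(\l_i\l_j)^{(2-n)/2})$. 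Finally, $(u_0,\psi_i)$ is kept exactly: it combines with the $u_0$-part of the nonlinear term to give the coefficient $\a_0(1-\a_0^{p-1-\e})$.

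\textbf{Nonlinear term.} Expand $\int|u|^{p-1-\e}u\psi_i$ as $\int W^{p-\e}\psi_i+(p-\e)\int W^{p-1-\e}v\psi_i+O(\|v\|^2)$. The linear-in-$v$ term is the delicate one. The orthogonality $v\in E_{a,\l,u_0}$ gives $\int \d_i^{p-1}\psi_i v=O(\int\d_i|v|)$ exactly as in the proof of \eqref{k11a}. The $\e$-correction $\d_i^{-\e}-c_0^{-\e}\l_i^{-\e(n-2)/2}$ contributes $O(\e\|v\|)$, and the interaction terms are bounded as in \eqref{k3}--\eqref{k7}. By Cauchy--Schwarz, everything is then bounded by $\|v\|^2$ plus squares of the quantities in $R_{(\e,a,\l)}$, all of which sit inside $R_\l(i)$.

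For $\int W^{p-\e}\psi_i$, split $\O$ into neighbourhoods of the $a_j$ and use $(A+B)^{p-\e}=A^{p-\e}+(p-\e)A^{p-1-\e}B+O(\cdot)$:
\begin{itemize}
\item \emph{The $\d_i$ region.} $\a_i^{p-\e}\int P\d_i^{p-\e}\psi_i$ is computed via \eqref{eq:k20}. The logarithmic term gives $\bar c_3\e c_0^{-\e}\l_i^{-\e(n-2)/2}\a_i^{p-\e}$, and the $-\e\ln$ correction to $\d_i^{-\e}$ contributes to $\bar c_3$ after integration. The correction $p\,\d_i^{p-1}\theta_i$ yields $-2\cdot c_0\bar c_2\frac{n-2}{2}H(a_i,a_i)\l_i^{2-n}$ times $c_0^{-\e}\l_i^{-\e(n-2)/2}\a_i^{p-1-\e}$, which is the factor $2$ in the statement.
\item \emph{The $u_0$ cross term.} $p\a_i^{p-1}\a_0\int\d_i^{p-1}u_0\psi_i$, together with $\a_0^{p-\e}\int u_0^{p-\e}\psi_i$, gives $u_0(a_i)\l_i^{(2-n)/2}$ times $\bar c_2\frac{n-2}{2}$, with error $O(\ln\l_i/\l_i^{n/2})$ coming from the Taylor expansion of $u_0$. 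I would double-check the sign and the constant here by integrating $\d_i^{p-1}\l\partial_\l\d_i$, which equals $\frac1p\l\partial_\l\int\d_i^p$.
\item \emph{The $\d_j$ terms.} These give the $\e_{ij}$ contributions with factor $\a_\ell^{p-1-\e}$ for $\ell=i,j$, with remainders $\e_{ij}^{n/(n-2)}\ln\e_{ij}^{-1}$ (Bahri \cite{B}, E2).
\end{itemize}

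\textbf{Main obstacle.} The hardest part is keeping every remainder at the sharp order claimed in $R_\l(i)$, in particular the $(\l_id_i)^{-n}\ln(\l_id_i)$ boundary errors from $\theta_i$. For these I would use $|\theta_i|\le c\,\l_i^{(2-n)/2}d_i^{2-n}$ and $|\l_i\partial_{\l_i}\theta_i|$ of the same order, split $\O$ into $B(a_i,d_i)$ and its complement, and Taylor expand $H(a_i,\cdot)$ inside the ball so that its first-order term cancels by symmetry.
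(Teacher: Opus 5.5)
Your plan is essentially the paper's proof. It uses the same term-by-term treatment of \eqref{lion1}, the same orthogonality $v\in E_{a,\lambda,u_0}$ for the part linear in $v$, and the same splitting of $\int_\O W^{p-\e}\psi_i$ into self-interaction, $u_0$-cross and $\d_j$-interaction pieces; the only difference is that you rederive, via $\int_\O \n P\d_i\cdot\n\psi_i=\int_\O\d_i^p\psi_i$, the gradient terms the paper quotes from \cite{BCH}.

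Two bookkeeping slips need fixing when you write it out.

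First, the factor $2$ does not come from the $p\,\d_i^{p-1}\theta_i$ correction alone. Each of $-p\int_\O\d_i^{p-1}\theta_i\,\l_i\partial_{\l_i}\d_i$ and $-\int_\O\d_i^{p}\,\l_i\partial_{\l_i}\theta_i$ contributes one copy of $\frac{n-2}{2}c_0\bar{c}_2H(a_i,a_i)\l_i^{2-n}$ to $\int_\O P\d_i^{p-\e}\psi_i$; the second comes from the $\theta_i$-part of $\psi_i$, cf.\ \eqref{M1}--\eqref{M2}.

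Second, $\a_0^{p-\e}\int_\O u_0^{p-\e}\psi_i$ must be used only once, in the coefficient $\a_0(1-\a_0^{p-1-\e})(u_0,\psi_i)$, exactly as your linear-terms paragraph says. Since $(u_0,\psi_i)$ is itself of order $\l_i^{-(n-2)/2}$, the $u_0(a_i)$ term must come from the cross term $p\,\a_i^{p-1}\a_0\int_\O\d_i^{p-1}u_0\psi_i$ alone.
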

\begin{pf}
We estimate each term on the right-hand side of \eqref{lion1} with $\psi_i=\l_i\partial P\d_i /\partial\l_i$. 
For the second integral in \eqref{lion1}, using Lemma \ref{A1}, we obtain for $ n \in \{ 3 , 4 \} $, 
\begin{align}\label{k53}
\int_{\O} V \, P\d_i \, \l_i \frac{\partial P\d_i}{\partial \l_i}=O\Big(\int_{\O}P\d_i^{2}\Big)=O\Big(\frac{1}{\l_i} \mbox{ if } n = 3 \, \, ; \quad \frac{ \ln \l _i }{\l_i ^2 } \mbox{ if } n = 4  \Big).  
\end{align}
For $ n\geq 5 $, expanding $ V $ around $a_i$ in $B_i:=B(a_i, d_i/2)$ and using  Lemma \ref{A1} once again, straightforward computations yield
\begin{align}\label{k54}
\int_{\O}& V\, P\d_i \, \l_i \frac{\partial P\d_i}{\partial \l_i}
= \int_{B_i}  V P\d_i \, \l_i \, \frac{\partial P\d_i}{\partial \l_i}+ O\Big(\int_{ \O \setminus B_i } P\d_i^{2}\Big) \notag \\
&=V(a_i)\int_{B_i} \d_i \, \l_i \, \frac{\partial \d_i}{\partial \l_i}+O\Big(\int_{B_i}|x-a_i|^2\d_i^2 + \int_{B_i} \big(\theta_i+|\l_i\frac{\partial\theta_i}{\partial\l_i}|\big)\d_i  +\frac{1}{\l_i^2 (\l_id_i)^{n-4}}\Big) .
\end{align}
Note that if $n\geq 5 $, we have
$$
\int_{B_i} \big(\theta_i+|\l_i\frac{\partial\theta_i}{\partial\l_i}|\big)\d_i \leq \frac{ c }{\l_i^{ ( n-2 ) / {2}}d_i^{n-2}}\int_{B_i}\d_i \leq \frac{ c }{\l_i^2(\l_id_i)^{n-4}} .
$$
Now, we observe that
$$
\int_{B_i}|x-a_i|^2\d_i^2=O\big(\frac{1}{\l_i^4}\big)+(\mbox{if}\, n=4)\,O\big(\frac{d_i^2}{\l_i^2}\big)+(\mbox{if}\, n= 5)\,O\big(\frac{d_i}{\l_i^3}\big)+ \mbox{if}\, n= 6)\,O\big(\frac{\ln(\l_id_i)}{\l_i^4}\big).
$$
Combining these estimates with estimates $(52)$ and $(53)$ in \cite{BVP}, we find that \eqref{k54} becomes
\begin{align}\label{k54bis}
\int_{\O} V \, P\d_i \, \l_i \frac{\partial P\d_i}{\partial \l_i}
= & - V(a_i) \, \gamma_n \frac{ 1 }{\l_i^{2}}+(\mbox{if}\, n=4)\,O\Big(\frac{ \ln \l_i }{\l_i^2}\Big)+(\mbox{if}\, n\geq 5)\,O\Big(\frac{1}{\l_i^2(\l_id_i)^{n-4}}\Big)\notag\\
& + (\mbox{if}\, n=6)\,O\Big(\frac{\ln(\l_id_i)}{\l_i^4}\Big) +  (\mbox{if}\, n=3)\,O\Big(\frac{1 }{\l_i }\Big).
\end{align}
We remark that 
$$ \frac{1}{\l_i^2(\l_id_i)^{n-4}} = \Big( \frac{1}{\l_i^{n/2}}\Big)^{4/n} \Big( \frac{ 1 }{(\l_id_i)^{n}} \Big)^{(n-4)/n} \leq c \Big( \frac{1}{\l_i^{n/2}} +  \frac{ 1 }{(\l_id_i)^{n}} \Big). $$ 
For the third term appearing on the right-hand side of \eqref{lion1}, for $ n \geq 5$, using Lemma \ref{A1}, we get 
\begin{align}\label{k55}
\Big| \int_{\O} V \, P\d_j \, \l_i \frac{\partial P\d_i}{\partial \l_i} \Big|  \leq c \int_\O \Big[ \d_j \d_i\Big]^{\frac{n-4}{n-2} } \d_j ^\frac{2}{n-2}  \d_i ^\frac{2}{n-2}  & \leq c \Big( \e_{ij}^\frac{n}{n-2} \ln ( \e_{ij}^{-1})  \Big)^{ \frac{n-4}{n}} \Big( \frac{ \ln \l_j }{ \l_j ^{n/2}} \Big)^\frac{2}{n}  \Big( \frac{ \ln \l_i }{ \l_i ^{n/2} } \Big)^\frac{2}{n}  \notag  \\ 
  & \leq c  \e_{ij}^\frac{n}{n-2} \ln ( \e_{ij}^{-1})  + \frac{ \ln \l_j }{ \l_j ^{n/2}} + \frac{ \ln \l_i }{ \l_i ^{n/2} } 
 \end{align}
and for $ n \in \{3,4\} $, we have 
$$ \Big| \int_{\O} V \, P\d_j \, \l_i \frac{\partial P\d_i}{\partial \l_i} \Big|  \leq c \Big( \int_\O \d_j ^2 \Big)^{\frac{1}{2}} \Big( \int_\O \d_i ^2 \Big)^{\frac{1}{2}} \leq c \Big( \frac{1}{ \sqrt{ \l_j \l_ i } } \, \, \mbox{ if } n = 3 \, \, ; \, \,  \frac{ \sqrt{ \ln \l_ j } } { \l_j }  \frac{ \sqrt{ \ln \l_ i } } { \l_i } \, \, \mbox{ if } n = 4 \Big) . $$
 Estimates for the fourth and fifth terms on the right-hand side of \eqref{lion1} are provided in \cite{BCH} as follows:
\begin{align*}
& \int_{\O} \n P\d_i \n (\l_i\frac{\partial P\d_i}{\partial \l_i})=c_0\bar{c}_2 \frac{(n-2)}{2}\frac{H(a_i,a_i)}{\l_i^{(n-2)/2}}+O\left(\frac{\ln(\l_id_i)}{(\l_id_i)^n}\right) \\
 &\int_{\O} \n P\d_j \n (\l_i \frac{\partial P\d_i}{\partial \l_i})=c_0\bar{c}_2\left(\l_i\frac{\partial\e_{ij}}{\partial\l_i}+\frac{(n-2)}{2}\frac{H(a_i,a_j)}{(\l_i\l_j)^{\frac{n-2}{2}}}\right)+O\Big(\e_{ij}^{\frac{n}{n-2}}\ln(\e_{ij}^{-1})+\sum_{l=i,j}\frac{\ln(\l_ld_l)}{(\l_ld_l)^n}\Big).
\end{align*} 
It remains to estimate the nonlinear term in \eqref{lion1}.  We have
\begin{align} \label{ms1} 
\int_{\O} &\vert  u \vert^{p-1-\e} \, u \, \l_i\frac{\partial P\d_i}{\partial\l_i}
=\int_{\O} W^{p-\e} \, \l_i\frac{\partial P\d_i}{\partial\l_i} + (p-\e)\int_{\O}W^{p-1-\e} \, {v} \, \l_i\frac{\partial P\d_i}{\partial\l_i} +O(\| v\|^2) . 
\end{align}
For the second integral in \eqref{ms1}, following the computations done in \eqref{rm8}-\eqref{rm11}, we get 
\begin{align} \label{ms2} 
\int_{\O}W^{p-1-\e} \, {v} \, \l_i \frac{\partial P\d_i}{\partial\l_i} = & \a_i^{p-1-\e}\int_{\O}P\d_i^{p-1-\e}\l_i\frac{\partial P\d_i}{\partial\l_i} v + \sum O\Big(\e_{ij}^{\frac{n+2}{n-2}}\ln^{\frac{n+2}{n}}\e_{ij}^{-1} + \e_{ij}^2 \Big)\notag\\
& +\,O\Big(\sum_{j=1}^N\frac{1}{\l_j^{n-2}} + \frac{1}{ \l_j ^4 }\Big) +(\mbox{ if } \, n=6)\,O\Big(\sum_{j=1}^N\frac{\ln^{4/3}\l_j}{\l_j^4})\Big)\, + O(\| v\|^2 ) .
\end{align}
In addition, following the proof of \eqref{k11a}, we get 
 \be\label{last}
 p\int_{\O}P\d_i^{p-1-\e} \l_i\frac{\partial P\d_i}{\partial\l_i}v = O\big(\| v\| \big[ R_{33}(i) + \e +  R_{34}(i) \big]\big),
 \ee
 where $ R_{33}(i)$ and $ R_{34}(i)$ are defined in \eqref{R33} and \eqref{R34} respectively. This completes the estimate of the second integral in \eqref{ms1}.
We now turn to the first integral on the right-hand side of \eqref{ms1}. We observe that
\begin{align}
\int_{\O} W^{p-\e} \, \l_i\frac{\partial P\d_i}{\partial\l_i} 
& =  \int_{\O} \big(\sum_{j=1}^N \a_jP\d_j\big)^{p-\e} \, \l_i\frac{\partial P\d_i}{\partial\l_i}  + ( p - \e ) \int_{\O} \big(\sum_{j=1}^N \a_jP\d_j\big)^{p-\e - 1 } ( \a_0 u_0 ) \, \l_i\frac{\partial P\d_i}{\partial\l_i}  \notag\\
&+ \int_{\O}( \a_0  u_0 ) ^{p-\e} \, \l_i\frac{\partial P\d_i}{\partial\l_i}   
 + O \Big( \int_{\O}  \big(\sum_{j=1}^N \a_jP\d_j\big)^{ \frac{p-\e - 1 }{ 2 } } u_0 ^{ \frac{p - \e + 1 }{ 2 } } P\d_i   \Big)\notag\\
 & + (\mbox{if } n=3) O \Big( \int_\O \Big[\big(\sum_{j=1}^N \a_jP\d_j\big) u_0^4 P\d_i + \big(\sum_{j=1}^N \a_jP\d_j\big)^{3-\e} u_0^2 P\d_i \Big]\Big). \label{kkk4}
\end{align}
To estimate \eqref{kkk4}, first, the two last integrals satisfy: for $ n = 3 $, we have
\be \label{13kk} \int_\O \big(\sum_{j=1}^N \a_jP\d_j\big) u_0^4 P\d_i  + \int_\O \big(\sum_{j=1}^N \a_jP\d_j\big)^{3-\e}  u_0^2 P\d_i \leq c  \sum_{ j=1 }^N \int_\O (\d_j^{ 2 } + \d_j^4 ) \leq c \sum_{ j=1 }^N \frac{ 1 }{ \l_j  }
. \ee
Second, using Lemma \ref{A1}, \eqref{ba} and \eqref{r41}, the third and the fourth integrals can be estimated as 
\begin{align}
&  \int_{\O}( \a_0  u_0 ) ^{p-\e} \, \l_i\frac{\partial P\d_i}{\partial\l_i} = \a_0^{p-\e}(u_0,\l_i\frac{\partial P\d_i}{\partial \l_i}) +
 O \Big( \frac{ \e} { \l_i ^{ {(n-2)} / { 2 } }} \Big) , \label{11kk}\\
 & \int_{\O}  \big(\sum_{j=1}^N \a_jP\d_j\big)^{ (p-\e - 1 ) / 2 } u_0 ^{ (p - \e + 1 ) / 2 }  P\d_i  \leq c  \sum  \int_\O \d_j ^{ n / (n-2) }  \leq c \sum \frac{ \ln \l_j }{ \l_j ^{n/2}} . \label{12kk}
 \end{align}
Concerning the first integral in the right hand-side of \eqref{kkk4},  we write
\begin{align}\label{rm14}
 \int_{\O}&\big(\sum_{j=1}^N\a_j P\d_j\big)^{p-\e}\l_i\frac{\partial P\d_i}{\partial\l_i}    = \a_i^{p-\e}\int_{\O}P\d_i^{p-\e} \l_i\frac{\partial P\d_i}{\partial\l_i}  +(p-\e)\sum_{j\neq i}\a_i^{p-1-\e}\a_j\int_{\O}P\d_i^{p-1-\e} P\d_j \l_i\frac{\partial P\d_i}{\partial\l_i} \notag\\
 &+ \sum_{j\neq i}\a_j^{p-\e}\int_{\O} P\d_j^{p-\e}\l_i\frac{\partial P\d_i}{\partial\l_i}  + \sum_{j\neq r} O \Big( \int_\O( \d_k \d_j)^{ n / (n-2)} \, \, \mbox{ if } n \geq 4; \quad \int_\O \d_k^4  \d_j ^ 2 \, \, \mbox{ if } n = 3 \Big) . 
\end{align}
We now proceed to estimate each integral appearing on the right-hand side of \eqref{rm14}. We remark that the two last integrals can be deduced from Estimate $E2$ in \cite{B} and Lemma $6.6$ in \cite{BVP}.  For the first integral, we write
\begin{align}\label{k43}
\int_{\O}P \d_i^{p-\e} \, \l_i \frac{\partial P\d_i}{\partial \l_i} & = \int_{\O} \d_i^{p-\e} \, \l_i \frac{\partial \d_i}{\partial \l_i}-\int_{\O} \d_i^{p-\e} \, \l_i \frac{\partial \theta_i}{\partial \l_i} -(p-\e)\int_{\O} \d_i^{p-1-\e} \, \l_i \frac{\partial \d_i}{\partial \l_i}\theta_i\notag\\
&\qquad
+O\Big(\int_\O \d_i^{p-1}\theta_i\big(\theta_i+\big|\l_i\frac{\partial \theta_i}{\partial\l_i}\big|\big)\Big).
\end{align}
However, following the proof of (91) in \cite{EM}, we easily see that
\be\label{l1}
\int_{\O} \d_i^{p-\e} \, \l_i \frac{\partial \d_i}{\partial \l_i}=-\bar{c}_3\e \l_i^{-\e(n-2)/2} +O\Big(\e^2 +\frac{\ln(\l_id_i)}{(\l_id_i)^n}\Big).
\ee
Using  Lemma \ref{A1}, let $ B_i := B(a_i, d_i / 2 )$, we get 
\begin{align} \label{l2}
\int_\O \d_i^{p-1}\theta_i\big(\theta_i+\big|\l_i\frac{\partial \theta_i}{\partial\l_i}\big|\big) & \leq \frac{ c }{ ( \l _i d_i^2)^{n-2} } \int_{ B_i} \d_i ^{p-1} + c \int_{ \O \setminus B_i } \d_i ^{ p+1 }  \notag \\
 & = O\Big( \frac{1}{(\l_id_i)^n} \Big)+   O\Big(\frac{1}{(\l_id_i)^2}\,(\mbox{if}\, n=3)\,;\, \frac{\ln(\l_id_i)}{(\l_id_i)^4}\,(\mbox{if}\, n=4) \Big),
\end{align}
\begin{align}\label{M1}
\int_{\O} \d_i^{p-\e} \, \l_i \frac{\partial \theta_i}{\partial \l_i} &=\int_{B_i} \d_i^{p-\e} \, \l_i \frac{\partial \theta_i}{\partial \l_i}+ O\Big(\frac{1}{(\l_id_i)^n}\Big)\notag\\
&=-\frac{(n-2)}{2}c_0^{-\e}\l_i^{\frac{-\e(n-2)}{2}}c_0\bar{c}_2\frac{H(a_i,a_i)}{\l_i^{n-2}}+O\Big(\frac{\e}{(\l_id_i)^{n-2}}+\frac{\ln(\l_id_i)}{(\l_id_i)^n}\Big).
\end{align}
By arguing in the same manner as in \eqref{M1}, we obtain
\begin{align}\label{M2}
-p\int_{\O} \d_i^{p-1-\e} \, \l_i \frac{\partial \d_i}{\partial \l_i}\theta_i=\frac{(n-2)}{2}c_0^{-\e}\l_i^{\frac{-\e(n-2)}{2}}c_0\bar{c}_2\frac{H(a_i,a_i)}{\l_i^{n-2}}+O\Big(\frac{\e}{(\l_id_i)^{n-2}}+\frac{\ln(\l_id_i)}{(\l_id_i)^n}\Big).
\end{align}
Combining the relations \eqref{k43}–\eqref{M2}, we deduce
\begin{align}\label{M3}
\int_{\O} P \d_i^{p-\e} \, \l_i \frac{\partial P\d_i}{\partial \l_i}   = & - \bar{c}_3 \, c_0^{-\e} \, \l_i^{ - \e (n-2) / {2}} \e+ (n-2)c_0^{-\e}\l_i^{\frac{-\e(n-2)}{2}}c_0\bar{c}_2\frac{H(a_i,a_i)}{\l_i^{n-2}}\notag\\
&+ O\Big( \e^{2}+\frac{\ln(\l_id_i)}{(\l_id_i)^n}\Big)+ (\,\mbox{if}\,\, n=3)\,O\Big(\frac{1}{(\l_id_i)^2}\Big). 
\end{align}
For the second integral on the right-hand side of \eqref{rm14}, we use Lemma \ref{A1} and \eqref{eq:k20} to write
 \begin{align}\label{bk0}
 p\int_{\O}P\d_i^{p-1-\e}\l_i\frac{\partial P\d_i}{\partial\l_i} P\d_j &= p\int_{\O}\d_i^{p-1-\e}\l_i\frac{\partial \d_i}{\partial\l_i} P\d_j +O\Big(\int_\O \d_i^{p-1}\d_j\theta_i\Big).
\end{align}
Note that, by estimate $F1$ in \cite{B}, Lemma $6.6$ in \cite{BVP}, and Lemma \ref{A1}, we obtain the following estimates: for $n=3$,
\be\label{bk1}
\int_\O \d_i^{p-1}\d_j\theta_i\leq \frac{1}{\sqrt{\l_i}d_i}\Big(\int_\O\d_i^4\d_j^2\Big)^{1/2}\Big(\int_\O\d_i^4\Big)^{1/2}\leq c\frac{\e_{ij}}{\l_id_i}=O\Big(\e_{ij}^2 +\frac{1}{(\l_id_i)^2}\Big),
\ee
and for $n\in\{4,5\}$
\begin{align*}
\int_\O \d_i^{p-1}\d_j\theta_i&\leq  |\theta_i|_\infty^{2/(n-2)}\int_{B_i} (\d_i\d_j)\d_i^{2/(n-2)}+\int_{\O\setminus B_i}(\d_i\d_j)\d_i^{4/(n-2)}\\
&\leq c\e_{ij}\ln^{(n-2)/n}(\e_{ij}^{-1})\frac{\ln^{2/n}(\l_id_i)}{(\l_id_i)^2}=O\Big(\e_{ij}^{n/(n-2)}\ln\e_{ij}^{-1}+ \frac{\ln(\l_id_i)}{(\l_id_i)^n}\Big),
\end{align*}
and for $n\geq 6$
\begin{align*}
\int_\O \d_i^{p-1}\d_j\theta_i&\leq c\e_{ij}\ln^{(n-2)/n}(\e_{ij}^{-1})\frac{1}{(\l_id_i)^2}=O\Big(\e_{ij}^{n/(n-2)}\ln(\e_{ij}^{-1})+ \frac{\ln(\l_id_i)}{(\l_id_i)^n}\Big).
\end{align*}
However, using Lemma $A7$ in \cite{BCH}, we find that
\begin{align*}
p&\int_{\O}\d_i^{p-1-\e}\l_i\frac{\partial \d_i}{\partial\l_i} P\d_j=c_0^{-\e}\l_i^{-\frac{\e(n-2)}{2}}p\int_{\O}\d_i^{p-1}\l_i\frac{\partial \d_i}{\partial\l_i} P\d_j +O\Big(\e \int_\O \d_i^p\ln (1+\l_i^2|x-a_i|^2)\d_j\Big)\\
&=c_0^{-\e}\l_i^{-\frac{\e(n-2)}{2}}\int_{\O}\n \big(\l_i\frac{\partial P\d_i}{\partial\l_i}\big) \n P\d_j+O\Big(\e\int_\O \d_i^{2/(n-2)}\ln (1+\l_i^2|x-a_i|^2)\d_i^{n/(n-2)}\d_j\Big)\notag\\
&=c_0^{-\e} \l_i^{ \frac{- \e (n-2)}{ 2}} c_0\bar{c}_2\Big(\l_i \frac{\partial \e_{ij}}{\partial \l_i}+ \frac{(n-2)}{2} \frac{H(a_i,a_j)}{(\l_i\l_j)^{\frac{n-2}{2}}}\Big)+O\Big(\e_{ij}^{\frac{n}{n-2}}\ln(\e_{ij}^{-1}) +\e\e_{ij}+\sum_{l=i,j}\frac{\ln(\l_ld_l)}{(\l_ld_l)^n}\Big).
\end{align*}
Thus, \eqref{bk0} becomes
 \begin{align}\label{M6}
 p\int_{\O}&P\d_i^{p-1-\e}\l_i\frac{\partial P\d_i}{\partial\l_i} P\d_j =c_0^{-\e} \l_i^{ - \e (n-2) / {2}} c_0\bar{c}_2\Big(\l_i \frac{\partial \e_{ij}}{\partial \l_i}+ \frac{(n-2)}{2} \frac{H(a_i,a_j)}{(\l_i\l_j)^{\frac{n-2}{2}}}\Big)+O\big(\e\e_{ij}\big)\notag\\
 &+(\mbox{if}\,\, n\geq 4)\,\, O\Big(\e_{ij}^{\frac{n}{n-2}}\ln(\e_{ij}^{-1}) + \sum_{l=i,j}\frac{\ln(\l_ld_l)}{(\l_ld_l)^n} \Big)+(\mbox{if}\,\, n=3)\,\, O\Big(\e_{ij}^2+ \sum_{l=i,j}\frac{1}{(\l_ld_l)^2} \Big).
 \end{align}
 The  third  integral on the right-hand side of \eqref{rm14} is handled as in the proof of \eqref{M6}. This yields
\begin{align}\label{M5}
\int_{\O} &P\d_j^{p-\e} \, \l_i \frac{\partial  P\d_i}{\partial \l_i}  = c_0^{-\e} \l_j^{ - \e (n-2) / {2}} c_0\bar{c}_2\Big(\l_i \frac{\partial \e_{ij}}{\partial \l_i}+ \frac{(n-2)}{2} \frac{H(a_i,a_j)}{(\l_i\l_j)^{\frac{n-2}{2}}}\Big)+O\big(\e\e_{ij}\big)\notag\\
 &+(\mbox{if}\,\, n\geq 4)\,\, O\Big(\e_{ij}^{\frac{n}{n-2}}\ln(\e_{ij}^{-1}) + \sum_{l=i,j}\frac{\ln(\l_ld_l)}{(\l_ld_l)^n} \Big)+(\mbox{if}\,\, n=3)\,\, O\Big(\e_{ij}^2+ \sum_{l=i,j}\frac{1}{(\l_ld_l)^2} \Big).
\end{align}
Putting together \eqref{M3}, \eqref{M5}, and \eqref{M6}, the estimate of the first integral of \eqref{kkk4} follows. 

Regarding the second integral in the right hand-side of \eqref{kkk4},  we observe that
\begin{align*} 
 \int_{\O} &\big(\sum_{j=1}^N\a_j P\d_j\big)^{p-\e-1}  \a_0u_0  \,\l_i\frac{\partial P\d_i}{\partial\l_i}  \notag\\
 &\quad = \a_i^{p-\e-1}\a_0\int_{\O} P\d_i ^{p-1-\e} \l_i\frac{\partial P\d_i}{\partial\l_i}  u_0  + \sum_{ j \neq i } O \Big( \int_\O P\d_i^{ p-1-\e} P\d_j u_0 + \int_\O P\d_j^{p-1-\e} P\d_i\, u_0 \Big). 
 \end{align*}
 Note that, for $ n \geq 4 $, applying \eqref{eq:k20}, Lemma \ref{A1} and using the fact that
 $$
 \d_k^{4/(n-2)}\d_r= \big[(\d_k\d_r)^{n/(n-2)}\big]^{2/n} \big[\d_k^{\frac{n}{n-2}}\big]^{2/n}\big[\d_r^{\frac{n}{n-2}}\big]^{(n-4)/n} \leq c \Big((\d_k\d_r)^{n/(n-2)} + \sum \d_j^{\frac{n}{n-2}}\Big) ,
 $$
 we obtain, by using Estimate $E2$ in \cite{B} and Lemma $6.6$ in \cite{BVP}, 
\be \label{mo1}  \int_\O P\d_i ^{ p-1-\e} P\d_j u_0+ \int_\O P\d_j ^{p-1-\e} P\d_i u_0  \leq c \Big( \e_{ij}^{ \frac{n}{n-2}} \ln \e_{ij}^{-1} +  \sum \frac{ \ln( \l_k d_k) }{ \l_k ^{n/2}} \Big) . \ee 
For $ n = 3$, we once again use \eqref{eq:k20} and Lemma \ref{A1} to derive that
$$  
\int_\O P\d_i ^{ p-1-\e} P\d_j u_0 + \int_\O P\d_j ^{p-1-\e} P\d_i u_0 \leq  c \int_\O \d_i ^{ 4} \d_j  + c \int_\O \d_j ^{ 4} \d_i.
 $$
However, Lemma $6.6$ of \cite{BVP} yields 
$$
 \int_\O \d_k ^{ 4} \d_r \leq \Big( \int_\O (\d_k^2\d_r)^2 \Big)^\frac{1}{2} \Big( \int_\O \d_k ^{4} \Big) ^ \frac{1}{2} \leq  \frac{ c \e_{kr} }{\sqrt{\l_k} }   \leq c \e_{kr}^2    + \frac{c }{ \l_k} $$
 which implies that 
 $$
 \int_\O P\d_i ^{ p-1-\e} P\d_j u_0 + \int_\O P\d_j ^{p-1-\e} P\d_i u_0 \leq  c \Big(\e_{ij}^2   + \frac{1}{\l_i} +\frac{1}{\l_j}\Big).
 $$
 Putting together the above estimates, we obtain
 \begin{align}\label{rm15} 
 p\int_{\O} \big(&\sum_{j=1}^N\a_j P\d_j\big)^{p-\e-1}  \a_0u_0  \,\l_i\frac{\partial P\d_i}{\partial\l_i} 
  =p \a_i^{p-\e-1}\a_0\int_{\O} P\d_i ^{p-1-\e} \l_i\frac{\partial P\d_i}{\partial\l_i}  u_0 \notag\\
  & + (\mbox{ if } \, n\geq 4)\, O\Big(\sum_{j\neq i} \e_{ij}^{ n / (n-2)}  +  \sum_{j=1}^N\frac { \ln \l_j  }{ \l_j ^{n/2} }\Big)
  + (\mbox{ if } \, n=3)\, O\Big(\sum_{j\neq i}\e_{ij}^2   + \sum_{j=1}^N \frac{1}{\l_j}\Big).
 \end{align}
  However, let $ B_i := B(a_i, d_i/2) $, using Lemma \ref{A1}, \eqref{eq:k20} and the fact that $ u_0 $ is bounded and using the argument of $(76)$ in  \cite{KK},  we get   
  \begin{align}\label{M77}
p  \int_{\O}P\d_i^{p-1-\e}\l_i\frac{\partial P\d_i}{\partial\l_i} u_0 & =  c_0^{-\e} \l_i ^{ -\e (n-2)/2} p \int_{ B_i} \d_i^{p-1}\l_i\frac{\partial \d_i}{\partial\l_i} u_0 \notag \\
  & + O \Big( \int_{ \O \setminus B_i} \d_i ^p + \e \int_{B_i} \d_i ^p \ln ( 1 + \l_i ^2 | x-a_i|^2 ) + \int_{B_i} \d_i ^{p-1} \big[ \theta_i + \big| \l_i \frac{ \partial \theta_i}{ \partial \l_i} \big| \big] \Big) \notag \\
 &  =-c_0^{-\e} \l_i^{ - \frac{\e (n-2)} {2}}\frac{n-2}{2} \bar{c}_2\frac{u_0(a_i)}{\l_i^{\frac{n-2}{2}}}+O\Big(\frac{\e}{\l_i^{\frac{n-2}{2}}}+ \frac{\ln(\l_id_i)}{\l_i^{\frac{n+2}{2}}}+\frac{1}{\l_i^{\frac{n-2}{2}} ( \l_i d_i)^2}\Big)\notag\\
 & + (\mbox{if}\,\, n=3)\,O\Big(\frac{1}{\sqrt{\l_i}(\l_id_i)}\Big)+ (\mbox{if}\,\, n=4)\,O\Big(\frac{\ln(\l_id_i)}{\l_i ( \l_i d_i)^2 }\Big).
 \end{align}
 Notice that 
 $$ \frac{1}{\l_i^{ (n-2) / {2}} ( \l_i d_i)^2} = \big( \frac{1}{\l_i^{{n}/ {2}} }\big)^{(n-2)/ {n}} \big( \frac{ 1 }{( \l_i d_i)^n} \big)^{{2}/{n}} \leq \frac{ c }{ \l_i^{{n}/ {2}} } +   \frac{ c }{( \l_i d_i)^n} . $$
 Combining the results in \eqref{lion1}, \eqref{k54bis}- \eqref{ms2}, 
\eqref{kkk4}-\eqref{13kk}, \eqref{rm14}, and \eqref{M5}- \eqref{last},    we easily derive the desired result.
\end{pf}

\subsection{Asymptotic Estimate in the Concentration Points}

In this subsection, our aim is to establish an expansion for the gradient of the functional  $ {I}_\e$ with respect to the concentration points $a_i's$. Since Theorems \ref{t1}–\ref{t3} only require the case of a single blowing-up solutions, we begin with this case in the first step. Then, as Theorems \ref{t4}–\ref{t6}6 deal with interior multiple- bubbling solutions, we provide in the second step the desired expansion for  the interior blow-up points.
\subsubsection{The Case of a Single Blow-Up Point}
Since the case  $n = 4$ requires more refined asymptotic estimates, we handle  $n \geq 5$ in this section and postpone the necessary estimates for $n = 4$ to the next section. 
\begin{pro}\label{pas-geq5}
Let $ n\geq 5 $, $ (\a,\l,a)\in  \mathbf{B}(1,\mu_0) $ with $ \mu_0 $ positive small, $ v \in E_{a, \l , u_0} $ and  $ u= \a_0 \, u_0+ \a \, P\d_{(a,\l)}+  {v} $. Then, the following estimate holds:
\begin{align*}
\Big\langle I'_\e(u)&,\frac{1}{\l} \frac{\partial P\d}{\partial a}\Big\rangle  =  \a \,\rho_n \, \frac{\n V(a)}{\l^{3}}  + \a_0 \big(1-\a_0^{p-1-\e}\big)\big( u_0,\frac{1}{\l}\,\frac{\partial P\d}{\partial a}\big)\\
& -\frac{\a}{2}\,\frac{c_0\,\bar{c}_2}{\l^{n-1}}\frac{\partial H}{\partial a}(a,a)\big[1-2\a^{p-1-\e}\l^{-\e\frac{n-2}{2}}\big]-\a_0\,\a^{p-1-\e}\bar{c_5}c_0^{-\e}\l^{-\e\frac{n-2}{2}}p\frac{\n u_0(a)}{\l^{n/2}}+ R_s(a)  , 
\end{align*}
where $ \bar{c}_2 $  is defined in Proposition \ref{rp43}, $\bar{c}_5=\frac{n-2}{n}c_0^p\int_{\R^n}\frac{|x|^2}{(1+|x|^2)^{\frac{n+4}{2}}}$,   $ \rho_n := \frac{n-2}{n} \, c_0^{2} \int_{\mathbb{R}^{n}} \frac{|x|^{2}}{(1+|x|^{2})^{n-1}} \, dx $, and 
\be \label{chi}
R_s(a) = O\Big(\frac{ d }{(\l d)^{n-1}} + \frac{ 1 }{(\l d)^{n}} + \e^2 + \frac{d^{\frac{ p+1-\e}{ 2 }}}{\l^{n/2}} + \| v \|^2\Big) + (\mbox{if } n=5,6) \, o\big(\frac{1}{\l^{\frac{n}{2}}}\big) + (\mbox{if } n \geq 7)\,  O (\frac{ 1} { \l ^4 }) .  \ee 
\end{pro}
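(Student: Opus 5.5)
We will follow the same pattern as Proposition \ref{rp43}. Take $\psi=\l^{-1}\partial P\d/\partial a$ in \eqref{lion1} with $N=1$, and estimate each term separately. Because $v\in E_{a,\l,u_0}$, the $v$-contributions of the quadratic part drop out. We are left with the following pieces:
\begin{itemize}
\item the linear term $\a_0(u_0,\psi)$;
\item the potential term $\a\int_\O V\,P\d\,\psi$;
\item the gradient term $\a\int_\O\n P\d\cdot\n\psi$;
\item the nonlinear term $\int_\O|u|^{p-1-\e}u\,\psi$.
\end{itemize}

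\textbf{Nonlinear term.} Expand it as in \eqref{ms1}: $\int W^{p-\e}\psi+(p-\e)\int W^{p-1-\e}v\psi+O(\|v\|^2)$. The $v$-integral reduces, as in \eqref{rm8}--\eqref{last}, to $\a^{p-1-\e}p\int P\d^{p-1-\e}\psi\, v$ plus small terms. We handle it by the argument of \eqref{k11a}, using orthogonality of $v$ to $\partial P\d/\partial a$. This yields $O(\|v\|(\e+R_{33}+R_{34}))$, which Young's inequality absorbs into $\|v\|^2+\e^2+\l^{-4}+(\l d)^{-n}$ and the $o(\l^{-n/2})$ terms for $n=5,6$.

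We split $\int W^{p-\e}\psi$ as in \eqref{kkk4}.
\begin{itemize}
\item The pure-bubble piece $\a^{p-\e}\int P\d^{p-\e}\psi$ is expanded like \eqref{k43}. The term $\int\d^{p-\e}\l^{-1}\partial\d/\partial a$ vanishes on $B(a,d)$ by oddness, up to $O((\l d)^{-n})$ from the complement. The $\theta$-terms then give $-\frac{c_0\bar c_2}{\l^{n-1}}\frac{\partial H}{\partial a}(a,a)\,c_0^{-\e}\l^{-\e(n-2)/2}$, using the boundary expansion of $\theta$ from \cite{R-G}/\cite{BCH} with an extra factor $d$ (hence the error $d/(\l d)^{n-1}$).
\item The mixed term $p\a^{p-1-\e}\a_0\int P\d^{p-1-\e}\psi\,u_0$ is treated as in \eqref{M77}. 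Taylor expand $u_0$ at $a$ on $B(a,d/2)$; the constant term vanishes by oddness. The gradient term gives $-\bar c_5\,p\,\n u_0(a)\l^{-n/2}$, and the second-order remainder is controlled by the stated error involving $d^{(p+1-\e)/2}/\l^{n/2}$ and $o(\l^{-n/2})$.
\item The term $\int(\a_0u_0)^{p-\e}\psi$ equals $\a_0^{p-\e}(u_0,\psi)+O(\e\l^{-n/2})$ by \eqref{ba}. Combined with $\a_0(u_0,\psi)$, it produces $\a_0(1-\a_0^{p-1-\e})(u_0,\psi)$.
\end{itemize}

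\textbf{Potential and gradient terms.} For the potential term, expand $V$ to second order on $B(a,d/2)$. The zero-order term vanishes by oddness up to $\theta$-corrections, which are bounded by $\l^{-2}(\l d)^{-(n-3)}$-type quantities. The first-order term gives $\rho_n\n V(a)\l^{-3}$, which is finite since $n\ge5$. The second-order term is odd and the third-order term is $O(\l^{-4})$; for $n=5,6$ it is instead $o(\l^{-n/2})$ after the corresponding integrability adjustment. The gradient term comes from \cite{BCH}: $\frac{c_0\bar c_2}{2\l^{n-1}}\frac{\partial H}{\partial a}(a,a)+O(\ldots)$, written with the sign convention that gives the bracket $[1-2\a^{p-1-\e}\l^{-\e(n-2)/2}]$ in the statement.

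\textbf{Main obstacle.} I expect the hardest step to be the careful $\theta$-expansions near the boundary, namely getting the sharp error $d/(\l d)^{n-1}$ instead of $(\l d)^{-(n-2)}$. For this I would use $\theta=c_0\l^{-(n-2)/2}H(a,\cdot)+O(\l^{-(n+2)/2}d^{-n})$ together with its $a$-derivative, and exploit the symmetry of $\d$ around $a$ so that only the $\partial_a H$ term survives.
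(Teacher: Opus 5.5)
Your plan follows the paper's proof closely. You test \eqref{lion1} with $\psi=\l^{-1}\partial P\d/\partial a$, remove the $v$-part of the quadratic form using $v\in E_{a,\l,u_0}$, and treat the $v$-linear term by orthogonality as in \eqref{as5}--\eqref{as6}. You then split $\int_\O W^{p-\e}\psi$ as in \eqref{kkk4} and Taylor expand $V$ and $u_0$ at $a$ using oddness. Two steps, however, do not go through as written.

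\textbf{The cross remainder in \eqref{kkk4}.} Besides the pure-bubble, mixed and pure-$u_0$ pieces, \eqref{kkk4} contains the remainder $O\big(\int_\O P\d^{(p-\e-1)/2}u_0^{(p-\e+1)/2}|\psi|\big)$, which you never estimate.
\begin{itemize}
\item The bound used for the $\l$-derivative, \eqref{12kk}, only gives $O(\ln\l\,\l^{-n/2})$. This is larger than the term $\bar c_5\,\n u_0(a)\l^{-n/2}$ you want to isolate. When $d\to0$ it is not covered by $R_s$; for instance, for $n=5,6$, $R_s$ only allows $o(\l^{-n/2})$ besides $d^{(p+1-\e)/2}\l^{-n/2}$.
\item The ingredient the paper uses is the boundary decay $u_0(x)\le c\,d(x,\partial\O)$, valid since $u_0\in C^1(\ov\O)$ vanishes on $\partial\O$. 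Hence $u_0\le c\,d$ on $B=B(a,d/2)$.
\item On $B$, combine this with $|\psi|\le c\,\d/(\l|x-a|)$, which makes $\d^{n/(n-2)}/(\l|x-a|)$ integrable with no logarithm. This gives $c\,d^{(p+1-\e)/2}\l^{-n/2}$.
\item On $\O\setminus B$, the bound $|\psi|\le c\,\d/(\l d)$ gives $c\ln\l/((\l d)\l^{n/2})$; see \eqref{as13}.
\end{itemize}
This is the origin of the $d^{(p+1-\e)/2}/\l^{n/2}$ term in $R_s$. It does not come from the Taylor remainder of $u_0$ in the mixed term, to which you attribute it; by \eqref{as12b}, that remainder is only $O\big((\l d)^{-2}\l^{-n/2}+\ln(\l d)\l^{-(n+4)/2}\big)$.

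Similarly, your main obstacle is misplaced. The error $d/(\l d)^{n-1}$ is not produced by the $\theta$-expansions of the bubble terms; those give $O\big((\l d)^{-n}+\e(\l d)^{2-n}\big)$, as in \eqref{as18}. It comes from the crude bounds in the potential term, for example $\int_\O\theta\,|\l^{-1}\partial\d/\partial a|=O\big(\l^{-1}(\l d)^{2-n}\big)$.

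\textbf{The sign of the gradient term.} There is no sign convention to choose here. One has
\[
\int_\O\n P\d\cdot\n\psi=-\frac{c_0\bar c_2}{2\l^{n-1}}\frac{\partial H}{\partial a}(a,a)+O\big((\l d)^{-n}\big)
\]
by Lemma A.10 of \cite{BCH}. As a check, this equals $\frac{1}{2\l}\partial_a\int_\O|\n P\d|^2$, and $\int_\O|\n P\d|^2=S_n-c_0\bar c_2H(a,a)\l^{2-n}+\dots$.

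Your value $-\frac{c_0\bar c_2}{\l^{n-1}}\frac{\partial H}{\partial a}(a,a)c_0^{-\e}\l^{-\e(n-2)/2}$ for $\int_\O P\d^{p-\e}\psi$ is correct, and it enters \eqref{lion1} with a minus sign. Combined with your $+$ sign for the gradient term, it would produce $+\frac{\a}{2}\frac{c_0\bar c_2}{\l^{n-1}}\frac{\partial H}{\partial a}(a,a)$ times the bracket $\big[1+2\a^{p-1-\e}\l^{-\e(n-2)/2}\big]$. Only the negative sign yields the stated $-\frac{\a}{2}\frac{c_0\bar c_2}{\l^{n-1}}\frac{\partial H}{\partial a}(a,a)\big[1-2\a^{p-1-\e}\l^{-\e\frac{n-2}{2}}\big]$, i.e.\ a net positive multiple of $\frac{\partial H}{\partial a}(a,a)$. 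Theorems \ref{t1} and \ref{t2} rely on exactly this sign when they project onto $\nu$.
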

\begin{pf}
The proof proceeds in a manner similar to that of Proposition \ref{rp43}. We estimate each term on the right-hand side of \eqref{lion1} with $\psi_i=\frac{1}{\l} \frac{\partial P\d}{\partial a}$ and $N=1$. We remark that the  integral $\int_{\O} \n P\d \n (\frac{1}{\l} \frac{\partial P\d}{\partial a})$ is computed in Lemma A.10 of \cite{BCH}. 
For the second integral in \eqref{lion1}, let $ B := B(a, d/2) $, using Lemma \ref{A1}, we obtain 
\begin{align}\label{k54as}
\int_{\O}& V\, P\d \, \frac{1}{\l} \frac{\partial P\d}{\partial a}
= \int_{B } V\d \, \frac{1}{\l} \frac{\partial \d}{\partial a}+ O\Big(\int_{ \O \setminus B } \d|\frac{1}{\l} \frac{\partial \d}{\partial a}|+ \int_{\O} \d|\frac{1}{\l} \frac{\partial \theta}{\partial a}| +\int_{\O} \theta|\frac{1}{\l} \frac{\partial \d}{\partial a}| \Big),
\end{align}
and 
\begin{align*}
&\int_{ \O \setminus B } \d|\frac{1}{\l} \frac{\partial \d}{\partial a}|\leq c\int_{ \O \setminus B }\frac{\d^2}{\l|x-a|}\leq \frac{c}{\l^2(\l d)^{n-3}} = O\Big( \frac{ d ^2 }{(\l d)^{n-1}}  \Big) ,\\
&\int_{\O} |\frac{1}{\l} \frac{\partial \theta}{\partial a}|\d \leq \| \frac{1}{\l} \frac{\partial \theta}{\partial a}\|_{L^{\frac{2n}{n-2}}}\Big(\int_\O\d^{\frac{2n}{n+2}}\Big)^{\frac{n+2}{2n}} \leq c\frac{R_{33}}{(\l d)^{n/2}} = O \Big( \frac{1}{(\l d)^{n}} +  \chi( \l ) \Big),\\
& \int_{\O} \theta|\frac{1}{\l} \frac{\partial \d}{\partial a}| = O\Big(\frac{1}{\l^{\frac{n-2}{2}}d^{n-2}} \int_{\O}\frac{\d}{\l|x-a|}\Big) = O\Big(\frac{1}{\l (\l d)^{n-2}}\Big) = O \Big( \frac{d}{(\l d)^{n-1}}\Big)  .  
\end{align*}
Now, expanding $V$ around $a$ in $B:=B(a, d/2)$, we observe that
\begin{align*}
\int_{B} V\d  \frac{1}{\l} \frac{\partial \d}{\partial a} & = \int_{B} \n V(a) (x-a)\d \, \frac{1}{\l} \frac{\partial \d}{\partial a}+ O\Big(\frac{1}{\l}\int_{B}|x-a|^2\d^2\Big)\\
 & = \rho_n \frac{\n V(a)}{\l^{3}} +  O \Big(\frac{ d^3 }{(\l d)^{n-1}} + \frac{1}{ \l^4} \Big)  . 
\end{align*}
Combining these estimates, we find that \eqref{k54as} becomes
\be \label{k54bisas}
\int_{\O} V \, P\d \, \frac{1}{\l} \frac{\partial P\d}{\partial a} =  \rho_n \frac{\n V(a)}{\l^{3}} +  O \Big(\frac{ d }{(\l d)^{n-1}} +  \chi ( \l ) \Big)  .
\ee 

 Before studying the nonlinear integral in \eqref{lion1}, we note that since $ u_0 $ is a positive solution of $ ( \mathcal{P}_{V,0} )$, we have $ \partial u_0 / \partial \nu \leq 0 $ on $ \partial \O $. Moreover, by the Hopf Lemma, there exists  $ c > 0 $ such that 
\be \label{u0-nu} \frac{ \partial u_0 }{ \partial \nu } \leq - c < 0 \qquad \mbox{ on } \, \, \partial \O . \ee
Furthermore, because $u_0$ is bounded in the $ C^1$-sense, it follows that
\be \label{u_0-bord} 
u_0 (x) \leq c d (x, \partial \O) \qquad \forall \, \, x \in \O .
 \ee   

It remains to estimate the nonlinear term in \eqref{lion1}. Using Lemma \ref{A1}, we obtain
\begin{align} \label{ms1as} 
\int_{\O} \vert  u \vert^{p-1-\e} \, u \, &\frac{1}{\l} \frac{\partial P\d}{\partial a}
=\int_{\O} W^{p-\e} \, \frac{1}{\l} \frac{\partial P\d}{\partial a} + (p-\e)\int_{\O}W^{p-1-\e} \, {v} \, \frac{1}{\l} \frac{\partial P\d}{\partial a} +O(\| v\|^2) . 
\end{align}
 The linear integral on $ v $ satisfies 
\begin{align} \label{ms2as} 
\int_{\O}W^{p-1-\e} \, {v} \, &\frac{1}{\l} \frac{\partial P\d}{\partial a} = \int_{\O}(\a P\d)^{p-1-\e}\frac{1}{\l} \frac{\partial P\d}{\partial a} v + O\Big(\int_{\O}\Big( \d^{p-2 }u_0+  u_0^{p-1-\e}\Big)\Big|\frac{1}{\l}\frac{\partial P\d}{\partial a}\Big||v|\Big).
\end{align}
However, invoking Lemma \ref{A1}  and \eqref{u_0-bord},  we see that
\begin{align}
&\int_{\O}P\d^{p-2-\e}u_0\big|\frac{1}{\l}\frac{\partial P\d}{\partial a}\Big||v|\leq d\int_B \d^{p-1}|v| + \frac{c}{\l d}\int_{\O\setminus B}\d^{p-1}|v|\leq c(d+\frac{1}{\l d})\| v\| R_{33},\label{as3}\\
&\int_\O u_0^{p-1-\e}\big|\frac{1}{\l}\frac{\partial P\d}{\partial a}\Big||v|\leq c\int_\O \big(\frac{1}{\l d}+ \frac{1}{\l |x-a_i|}\big)\d |v|\leq \frac{\| v\|}{\l d}R_{33} + \frac{\| v\|}{\l^2},\label{as4}
\end{align}
and, invoking \eqref{eq:k20} and Lemma \ref{A1},  we write
\begin{align}
p\int_{\O}P\d^{p-1-\e}\frac{1}{\l} \frac{\partial P\d}{\partial a} v&= p\int_{\O}\d^{p-1-\e}\frac{1}{\l} \frac{\partial \d}{\partial a} v +O\Big(\int_\O \d^{p-1}\theta |v|\Big)\notag\\
&=c_0^{-\e}\l^{-\e\frac{n-2}{2}}p\int_{\O}\d^{p-1}\frac{1}{\l} \frac{\partial \d}{\partial a} v +O\Big(\e \| v\|+\int_B \d^{p-1}\theta |v| + \frac{\| v\|}{(\l d)^{\frac{n+2}{2}}}\Big).\label{as5}
\end{align}
Note that
\be\label{as6}
p\int_{\O}\d^{p-1}\frac{1}{\l} \frac{\partial \d}{\partial a} v=-\int_\O V \frac{1}{\l} \frac{\partial \d}{\partial a} v=O\Big( \int_\O \frac{\d}{\l |x-a|}|v|\Big)=O\big(\frac{\|v\|}{\l^2}\big).
\ee
We also note that, for $n\leq 6$, we have
\be\label{as7}
\int_B \d^{p-1}\theta |v|\leq |\theta|_\infty \int_B \d^{4/(n-2)}|v| \leq c\frac{\| v\|}{(\l d^2))^{(n-2)/2}}R_{33}\leq 
\begin{cases}
\frac{\| v\|}{(\l d)^{n-2}}&\mbox{if} \,n\leq 5\\
\|v\| \frac{\ln^{\frac{2}{3}}(\l d)}{(\l d)^4}&\mbox{if}\, n=6,
\end{cases}
\ee
and, for $n\geq 7$,
\be\label{as8}
\int_B \d^{p-1}\theta |v|\leq \int_B \theta^{p/2}\d^{p/2}|v|\leq |\theta|_\infty^{p/2}\|v\| \Big(\frac{\ln(\l d)}{\l^{n/2}}\Big)^{\frac{n+2}{2n}}\leq \|v\| \frac{(\ln(\l d))^{\frac{n+2}{2n}}}{(\l d)^{(n+2)/2}}.
\ee
We now turn to the first integral on the right-hand side of \eqref{ms1as}. To this aim, observe that \eqref{kkk4} is valid with $ N = 1 $ and $ \l^{-1} \partial P \d / \partial a $ instead of $ \l_i \partial P \d_i / \partial \l_i$. Thus we need to estimate the terms corresponding  to those in \eqref{kkk4}. 
Using Lemma \ref{A1} and \eqref{ba}, we have
\begin{align}
\int_{\O}( \a_0  u_0 ) ^{p-\e} \, \frac{1}{\l} \frac{\partial P\d}{\partial a}
& = \a_0^{p-\e}\big(u_0,\frac{1}{\l_i} \frac{\partial P\d_i}{\partial a_i}\big)+ O\Big(\e \int_\O \big(\frac{1}{\l |x-a|}+ \frac{1}{\l d}\big)\d\Big)\notag\\
&=\a_0^{p-\e}\big(u_0,\frac{1}{\l_i} \frac{\partial P\d_i}{\partial a_i}\big)+ O\Big(\frac{\e}{(\l d)\l^{(n-2)/2}}\Big).\label{as10}
\end{align}
We also have, for $n\geq 4$,
\begin{align}\label{as11}
\int_{\O}  P\d^{p-\e - 1 }  &u_0  \, \frac{1}{\l} \frac{\partial P\d}{\partial a} = \int_{\O}  \d^{p-\e - 1 }  u_0  \, \frac{1}{\l} \frac{\partial \d}{\partial a} + O\Big(\int_{\O}  \d^{p-2}\theta \,  u_0  \, \big|\frac{1}{\l} \frac{\partial P\d}{\partial a}\big| + \int_{\O}  \d^{p- 1 }  u_0  \, \big|\frac{1}{\l} \frac{\partial \theta}{\partial a}\big| \Big). 
\end{align}
However, invoking Lemma $A.1$ of \cite{BCH} and the fact that near the boundary $u_0(x)\leq c d(x,\partial\O)$, we get
\begin{align*}
 \int_{\O}  \d^{p- 1 }  u_0  \, \big|\frac{1}{\l} \frac{\partial \theta}{\partial a}\big|\leq \frac{c}{\l d} \int_{\O}  \d^{p- 1 }  u_0  \,\theta & \leq \frac{c}{\l}|\theta|_\infty^{2/(n-2)}\int_B\d^{n/(n-2)} + \frac{c}{(\l d)^3 \l^{(n-2)/2}}\\
 &\leq \frac{c}{(\l d)^2}\frac{\ln(\l d)}{\l^{n/2}}+ \frac{c}{(\l d)^3 \l^{(n-2)/2}},
\end{align*}
and
\begin{align*}
\int_{\O}  \d^{p-2}\theta \,  u_0  \, \big|\frac{1}{\l} \frac{\partial P\d}{\partial a}\big|&\leq  d|\theta|_\infty^{2/(n-2)}\int_B\frac{\d^{n/(n-2)}}{\l |x-a|}+\frac{c}{\l d}\int_{\O\setminus B}\d^p\\
&\leq \frac{c}{(\l d)\l^{n/2}}+ \frac{c}{(\l d)^3 \l^{(n-2)/2}}.
\end{align*}
Thus, using \eqref{eq:k20}, estimate \eqref{as11} becomes
\begin{align}\label{as12}
\int_{\O}  P\d^{p-\e - 1 }  &u_0  \, \frac{1}{\l} \frac{\partial P\d}{\partial a}
 = c_0^{-\e}\l^{-\e \frac{n-2}{2}}\int_{B}  \d^{p - 1 }  u_0  \, \frac{1}{\l} \frac{\partial \d}{\partial a} + O\Big(\frac{1}{(\l d)\l^{n/2}}+ \frac{1}{(\l d)^3 \l^{\frac{n-2}{2}}}+ \frac{\e d}{\l^{\frac{n-2}{2}}}\Big).
\end{align}
Now, expanding $u_0$ around $a$, we obtain
\begin{align}\label{as12b}
\int_{B}  \d^{p - 1 }  u_0  \, \frac{1}{\l} \frac{\partial \d}{\partial a}&=\n u_0(a)\frac{c_0^p(n-2)}{n\l^{n/2}}\Big[\int_{\R^n}\frac{|x|^2}{(1+|x|^2)^{\frac{n+4}{2}}}+ O\Big(\frac{1}{(\l d)^2}\Big]+ O\Big(\frac{\ln(\l d)}{\l^{(n+4)/2}}\Big)\notag\\
&=\bar{c}_5\frac{\n u_0(a)}{\l^{n/2}}+O\Big(\frac{1}{(\l d)^2\l^{n/2}}+ \frac{\ln(\l d)}{\l^{(n+4)/2}}\Big).
\end{align}
For the remaining term  which corresponds to the case $n \geq 4$, we write
\begin{align}\label{as13}
\int_{\O}   P\d^{ \frac{p-\e - 1 }{ 2 } } u_0 ^{ \frac{p - \e + 1 }{ 2 } } \Big|\frac{1}{\l} \frac{\partial P\d}{\partial a} \Big|&\leq c\int_B d^{\frac{p - \e + 1 }{ 2 } } \frac{\d^{n/(n-2)}}{\l |x-a|}+ \int_{\O\setminus B}\frac{c}{\l d}\d^{\frac{n}{n-2}} \leq c\frac{d^{\frac{p - \e + 1 }{ 2 } }}{\l^{n/2}}+ \frac{\ln\l}{(\l d)\l^{\frac{n}{2}}}.
\end{align}
For the first term in the right hand-side, using Lemma $A.1$ of \cite{BCH} and \eqref{eq:k20},we write
\begin{align*}
 \int_{\O} P\d^{p-\e} \, \frac{1}{\l} \frac{\partial P\d}{\partial a} &  = c_0^{-\e}\l^{-\e\frac{n-2}{2}}\Big[\int_{B} \d^{p} \, \frac{1}{\l} \frac{\partial P\d}{\partial a} -(p-\e)\int_{B} \d^{p-1} \, \frac{1}{\l} \frac{\partial \d}{\partial a}\theta \Big]\notag\\
 &+O\Big(\frac{\e}{(\l d)^{n-2}}\Big)+O\Big(\int_B \d^{p-1}\theta^2\frac{1}{\l |x-a|}\Big)+ O\Big(\frac{1}{(\l d)^{n+1}}\Big).
\end{align*}
However, we have
$$
\int_B \d^{p-1}\theta^2\frac{1}{\l |x-a|}\leq |\theta|_\infty^2\int_B\frac{\d^{p-1}}{\l |x-a|}\leq c\chi_2(\l),
$$
where
$$
\chi_2 (\l)= \big( \frac{1}{ ( \l d )^4}\,(\mbox{if}\, n=4);\quad \frac{\ln(\l d)}{(\l d)^6}\,(\mbox{if}\, n=5);\quad \frac{1}{ ( \l d ) ^{n+1}}\,(\mbox{if}\, n\geq 6)\big). 
$$
Using Lemma $A.10$ in \cite{BCH}. we obtain
\begin{align*}
\int_{B} \d^{p} \, \frac{1}{\l} \frac{\partial P\d}{\partial a}
&=\int_\O \n P\d\,\n \big(\frac{1}{\l} \frac{\partial P\d}{\partial a}\big)+O\Big(\frac{1}{(\l d)^{n+1}}\Big) = - \frac{c_0\bar{c}_2}{2}\frac{1}{\l^{n-1}}\frac{\partial H}{\partial a}(a,a) +O\Big(\frac{1}{(\l d)^{n}}\Big),
\end{align*}
and 
\begin{align*}
-p\int_{B} \d^{p-1} \, \frac{1}{\l} \frac{\partial \d}{\partial a}\theta&=\int_\O \n P\d\,\n \big(\frac{1}{\l} \frac{\partial P\d}{\partial a}\big)+O\Big(\frac{1}{(\l d)^{n+1}}\Big) = - \frac{c_0\bar{c}_2}{2}\frac{1}{\l^{n-1}}\frac{\partial H}{\partial a}(a,a) +O\Big(\frac{1}{(\l d)^{n}}\Big).
\end{align*}
We derive that
\be\label{as18}
\int_{\O} P\d^{p-\e} \, \frac{1}{\l} \frac{\partial P\d}{\partial a}=-\frac{c_0\bar{c}_2}{\l^{n-1}}\frac{\partial H}{\partial a}(a,a) +O\Big(\frac{\e}{(\l d)^{n-2}}+\frac{1}{(\l d)^{n}}\Big).
\ee
Combining the results in \eqref{lion1}, \eqref{k54bisas}- \eqref{as10}, 
\eqref{as12}-\eqref{as18},  and Lemma $A.10$ in \cite{BCH},    we easily derive the desired result.
\end{pf}

\subsubsection{The Case of Multiple Interior Blow-Up Points}

\begin{pro}\label{p9}
Let $ n\geq 7 $, $ (\a,\l,a)\in  \mathbf{B}(N,\mu_0) $ with $ \mu_0 $ positive small and $ d_i := d(a_i, \partial \O ) \geq c > 0$, $ v \in E_{a, \l , u_0} $ and  $ u=\a_0\,u_0+ \sum_{i=1}^{N} \a_i \, P\d_{(a_i,\l_i)}+ {v} $. Then, for $ 1\leq i\leq N $, the following estimate holds:
\begin{align*}
\left\langle I'_\e(u),\frac{1}{\l_i} \frac{\partial P\d_i}{\partial a_i}\right\rangle  = & \a_i \,\rho_n \, \frac{\n V(a_i)}{\l_i^{3}}+ c_0\bar{c}_2 \, \sum_{j\neq i} \a_j \, \frac{1}{\l_i} \frac{\partial \e_{ij}}{\partial a_i} \Big(1- \sum_{k=i,j} c_0^{-\e} \a_k^{p-1-\e} \, \l_k^{{-\e (n-2)} / {2}}\Big)  +  R_{ai} , 
\end{align*}
where $ \bar{c}_2 $ is defined in Proposition \ref{rp43},   $ \rho_n := \frac{n-2}{n} \, c_0^{2} \int_{\mathbb{R}^{n}} \frac{|x|^{2}}{(1+|x|^{2})^{n-1}} \, dx $, and
\begin{align*} 
 R_{ai} = &O\Big(\e^2 +\|v\|^2+\frac{1}{\l_i^4} +  \sum \e_{kr} ^{ \frac{n}{n-2}} \ln ( \e_{kr}^{-1}) + \sum_{j\neq i}\e_{ij}^{\frac{n+1}{n-2}}\l_j|a_i-a_j|+\sum_j \frac{\ln\l_j}{\l_j^{n/2}}\Big) . 
\end{align*}
\end{pro}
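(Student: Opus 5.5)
We follow the scheme of Propositions \ref{rp43} and \ref{pas-geq5}: insert $\psi_i=\l_i^{-1}\partial P\d_i/\partial a_i$ into \eqref{lion1} and expand each term. Since $d_i\geq c>0$, every boundary quantity $\theta_i$, $\l_i^{-1}\partial\theta_i/\partial a_i$ is $O(\l_i^{-(n-2)/2})$ in $L^\infty$ with derivative gains, and all $H$-terms and $(\l_i d_i)^{-k}$ corrections are absorbed in $\sum_j \ln\l_j/\l_j^{n/2}$ (as $n\geq 7$). The linear terms are treated as follows.
\begin{itemize}
\item The potential term $\a_i\int_\O V P\d_i\,\psi_i$ is handled exactly as in \eqref{k54bisas}. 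By Taylor expansion of $V$ at $a_i$ on $B(a_i,d_i/2)$ (the odd zeroth and second order parts vanish by symmetry, the third order part is $O(\l_i^{-4})$), it gives $\a_i\rho_n\n V(a_i)/\l_i^3+O(\l_i^{-4})$.
\item The cross potential terms $\int_\O V P\d_j\psi_i$ are bounded as in \eqref{k55}, using $|\psi_i|\leq c\d_i$, by $\e_{ij}^{n/(n-2)}\ln\e_{ij}^{-1}+\sum\ln\l_k/\l_k^{n/2}$.
\item The term $\a_0(u_0,\psi_i)$ is kept as is. It cancels against $\int(\a_0u_0)^{p-\e}\psi_i$ up to $\a_0(1-\a_0^{p-1-\e})(u_0,\psi_i)=O(\l_i^{-n/2})$.
\item The gradient terms are taken from Lemma A.10 of \cite{BCH}: $\int\n P\d_j\n\psi_i=c_0\bar c_2\l_i^{-1}\partial\e_{ij}/\partial a_i+O(\e_{ij}^{n/(n-2)}\ln\e_{ij}^{-1}+\l^{-(n-2)})$, and the self term is $O(\l_i^{-(n-1)})$.
\end{itemize}

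For the nonlinear term we expand as in \eqref{ms1as}. The $v$-linear part is reduced, exactly as in \eqref{as5}--\eqref{as8} together with \eqref{rm11}, to $\|v\|$ times $O(\e+\l_i^{-2}+\e_{ij}^{(n+2)/(2(n-2))}\ln^{(n+2)/(2n)})$. The key point is \eqref{as6}: because $v\in E_{a,\l,u_0}$, the main piece $p\int\d_i^{p-1}\psi_i v$ equals $-\int V\psi_i v=O(\|v\|/\l_i^2)$. By Young's inequality, all of this is $O(\|v\|^2+\e^2+\l_i^{-4}+\e_{ij}^{n/(n-2)}\ln\e_{ij}^{-1})$.

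The $W^{p-\e}$ part is decomposed as in \eqref{kkk4} and \eqref{rm14}.
\begin{itemize}
\item The self term $\int P\d_i^{p-\e}\psi_i$ is odd up to the $\e$-expansion \eqref{eq:k20} and boundary effects, hence is $O(\e\,\l_i^{-(n-2)}+\l_i^{-(n-1)})$ since $d_i\geq c$; the $\e\ln(1+\l_i^2|x-a_i|^2)$ correction is radial and integrates to zero against the odd $\partial\d_i/\partial a_i$.
\item The $u_0$-mixed terms reduce, as in \eqref{as12}--\eqref{as13}, to $\bar c_5\n u_0(a_i)/\l_i^{n/2}$ plus lower order terms. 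This lies in $O(\ln\l_i/\l_i^{n/2})$ for the stated remainder, so no explicit $u_0$-term appears.
\item The interaction terms $p\int P\d_i^{p-1-\e}P\d_j\psi_i$ and $\int P\d_j^{p-\e}\psi_i$ are computed as in \eqref{M6} and \eqref{M5} with $\l_i\partial_{\l_i}$ replaced by $\l_i^{-1}\partial_{a_i}$. They yield $c_0\bar c_2\,c_0^{-\e}\a_k^{p-1-\e}\l_k^{-\e(n-2)/2}\a_j\l_i^{-1}\partial\e_{ij}/\partial a_i$ for $k=i$ and $k=j$ respectively.
\end{itemize}
Collecting these with the gradient term gives the factor $\bigl(1-\sum_{k=i,j}c_0^{-\e}\a_k^{p-1-\e}\l_k^{-\e(n-2)/2}\bigr)$, and the higher interaction terms are $O((\d_k\d_r)^{n/(n-2)})$.

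The main obstacle is controlling the error in these interaction expansions. When replacing $\d_i^{-\e}$ via \eqref{eq:k20}, the correction $\e\int\d_i^{p-1}\ln(1+\l_i^2|x-a_i|^2)\d_j|\psi_i|$ has no a priori $\e\e_{ij}$-cancellation in the $a$-direction. We expect to bound it by $\e\,\e_{ij}\ln\e_{ij}^{-1}\leq\e^2+\e_{ij}^{n/(n-2)}\ln\e_{ij}^{-1}$ (say, after squaring). The Lemma A.7/A.10 type expansion of $\int\d_i^{p-1}\l_i^{-1}\partial_{a_i}\d_i\,\d_j$ leaves an error naturally of size $\e_{ij}^{(n+1)/(n-2)}\l_j|a_i-a_j|$ when $\l_i|a_i-a_j|$ is large, which is why that term appears in $R_{ai}$. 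We would take this estimate from \cite{BCH,B} (estimate F-type), redoing their Taylor argument in the region $|x-a_j|\leq|a_i-a_j|/2$ only if needed.
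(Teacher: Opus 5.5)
Your proposal is correct and follows essentially the same route as the paper. You insert $\psi_i=\l_i^{-1}\partial P\d_i/\partial a_i$ into \eqref{lion1}, take the potential terms from \eqref{k54bisas} and \eqref{k55}, and take the gradient terms from \cite{BCH}; you remove the main $v$-linear piece by orthogonality as in \eqref{as6}; you recover the factor $\bigl(1-\sum_{k=i,j}c_0^{-\e}\a_k^{p-1-\e}\l_k^{-\e(n-2)/2}\bigr)$ from the analogues of \eqref{M6} and \eqref{M5}; and the self term and the $u_0$-bubble terms go into $\sum_j\ln\l_j/\l_j^{n/2}$. The one inconsistency is where you place the $\e_{ij}^{(n+1)/(n-2)}\l_j|a_i-a_j|$ error: the paper puts it in the gradient cross term (citing Lemma A.12 of \cite{BCH}), whereas you put it only in the interaction expansion. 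Since $-\D(\l_i^{-1}\partial_{a_i}P\d_i)=p\,\d_i^{p-1}\l_i^{-1}\partial_{a_i}\d_i$, the two are the same integral, so your sharper-looking error for the gradient cross term should carry that same term; the final $R_{ai}$ is unchanged.
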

\begin{pf}
 We estimate each term on the right-hand side of \eqref{lion1} with $\psi_i=\frac{1}{\l_i} \frac{\partial P\d_i}{\partial a_i}$.
By estimate \eqref{k54bisas}, we have
\begin{align}\label{clm1}
\int_{\O} V \, P\d_i \, \frac{1}{\l_i} \frac{\partial P\d_i}{\partial a_i}
&=  \rho_n \frac{\n V(a_i)}{\l_i^{3}}+\,O\Big(\frac{1}{\l_i^4}\Big).
\end{align}
For the third term appearing on the right-hand side of \eqref{lion1}, we invoke Lemma $6.6$ from \cite{BVP} and Lemma \ref{A1}, which give
\begin{align}\label{k55a}
\Big| \int_{\O} V \, P\d_j \, \frac{1}{\l_i} \frac{\partial P\d_i}{\partial a_i} \Big|  & \leq c\int_\O \frac{\d_j\d_i}{\l_i |x-a_i|}\leq \frac{c}{\l_i}\e_{ij} \leq  c \Big(\e_{ij}^{\frac{n}{n-2}} + \frac{1}{\l_i^{n/2}}\Big).
\end{align}
Using Lemmas $A.12$  in \cite{BCH}, we obtain 
\begin{align}
 \int_{\O} \n P\d_i \n (\frac{1}{\l_i} \frac{\partial P\d_i}{\partial a_i})&=O\left(\frac{1}{\l_i^{n-1}}\right),\label{foia}\\
 \int_{\O} \n P\d_j \n (\frac{1}{\l_i} \frac{\partial P\d_i}{\partial a_i})&=c_0\bar{c}_2\frac{1}{\l_i}\frac{\partial\e_{ij}}{\partial a_i} + O\Big(\frac{1}{(\l_i\l_j)^{\frac{n-2}{2}}}\frac{1}{\l_i }+\sum_{l=i,j}\frac{1}{\l_l^n}\Big) + O\Big( \e_{ij}^{\frac{n+1}{n-2}}\l_j|a_i-a_j|\Big).
 \label{fiia}
\end{align} 
It remains to estimate the nonlinear term in \eqref{lion1}. Using Lemma \ref{A1}, we obtain
\begin{align} \label{ms1a} 
\int_{\O} &\vert  u \vert^{p-1-\e} \, u \, \frac{1}{\l_i} \frac{\partial P\d_i}{\partial a_i}
=\int_{\O} W^{p-\e} \, \frac{1}{\l_i} \frac{\partial P\d_i}{\partial a_i} +O\Big(\e^2 + \| v\|^2 + \e_{ij}^{\frac{n+2}{n-2}}\ln^{\frac{n+2}{n}}\e_{ij}^{-1}\Big).
\end{align}
We now turn to the first integral on the right-hand side of \eqref{ms1a}. We have 
\begin{align}
\int_{\O} W^{p-\e} \, \frac{1}{\l_i} \frac{\partial P\d_i}{\partial a_i}
& =  \int_{\O} \big(\sum_{j=1}^N \a_jP\d_j\big)^{p-\e} \, \frac{1}{\l_i} \frac{\partial P\d_i}{\partial a_i} + ( p - \e ) \int_{\O} \big(\sum_{j=1}^N \a_jP\d_j\big)^{p-\e - 1 } ( \a_0 u_0 ) \, \frac{1}{\l_i} \frac{\partial P\d_i}{\partial a_i}  \notag\\
&+ \int_{\O}( \a_0  u_0 ) ^{p-\e} \, \frac{1}{\l_i}\frac{\partial P\d_i}{\partial a_i}   
 + O \Big( \int_{\O}  \big(\sum_{j=1}^N \a_jP\d_j\big)^{ \frac{p-\e - 1 }{ 2 } } u_0 ^{ \frac{p - \e + 1 }{ 2 } } P\d_i   \Big). \label{kkk4a}
\end{align}
Using Lemma \ref{A1} and \eqref{as10}, we obtain
\begin{align}
&  \int_{\O} (\a_ 0 u_0) ^{p-\e} \, \frac{1}{\l_i} \frac{\partial P\d_i}{\partial a_i} = \a_0^{p-\e}\big(u_0,\frac{1}{\l_i} \frac{\partial P\d_i}{\partial a_i}\big)+
 O \Big( \frac{ \e} { \l_i ^{ n/2 }} \Big) = O \Big( \frac{1}{ \l ^{n/2}} \Big)  . \label{11kka} \end{align}
Concerning the first integral in the right hand-side of \eqref{kkk4a}, we notice that \eqref{rm14} holds true with $ \l_i ^{-1} \partial P \d_i / \partial a_i $ instead of $ \l_i  \partial P \d_i / \partial \l_i $. Thus we need to estimate the term corresponding to those in \eqref{rm14}.  For the first integral, Using Lemma $B.2$ in \cite{BCH} and \eqref{as18}, we obtain
\begin{align}\label{k43a}
\int_{\O}P \d_i^{p-\e} \, \frac{1}{\l_i} \frac{\partial P\d_i}{\partial a_i} & = O\Big(\frac{1}{\l_i^{n-1}}+\frac{\e}{\l_i^{n-2}}\Big).
\end{align}
For the second integral, we use Lemma \ref{A1} together with \eqref{eq:k20} to write
 \begin{align}\label{bk0a}
 p\int_{\O}P\d_i^{p-1-\e}\frac{1}{\l_i} \frac{\partial P\d_i}{\partial a_i} P\d_j &= p\int_{\O}\d_i^{p-1-\e}\frac{1}{\l_i} \frac{\partial \d_i}{\partial a_i} P\d_j +O\Big(\int_\O \d_i^{p-1}\d_j\theta_i\Big).
\end{align}
Note that, by estimate $F1$ in \cite{B}, and Lemma \ref{A1}, we obtain 
\begin{align*}
\int_\O \d_i^{p-1}\d_j\theta_i&\leq |\theta_i|_{\infty}^{p-1}\Big(\int_\O(\d_i\d_j)^{\frac{n}{n-2}} \Big)^{(n-2)/n} \leq c\frac{\e_{ij}}{\l_i^2}\ln^{(n-2)/n}(\e_{ij}^{-1})=O\Big(\e_{ij}^{\frac{n}{n-2} } \ln(\e_{ij}^{-1})+ \frac{1}{\l_i^n}\Big).
\end{align*}
However, using \eqref{eq:k20}, we find that
\begin{align*}
p\int_{\O}&\d_i^{p-1-\e}\frac{1}{\l_i} \frac{\partial \d_i}{\partial a_i} P\d_j=c_0^{-\e}\l_i^{-\frac{\e(n-2)}{2}}p\int_{\O}\d_i^{p-1}\frac{1}{\l_i} \frac{\partial \d_i}{\partial a_i} P\d_j +O\Big(\e \int_\O \d_i^p\ln (1+\l_i^2|x-a_i|^2)\d_j\Big)\\
&=c_0^{-\e}\l_i^{-\frac{\e(n-2)}{2}}\int_{\O}\n \big(\frac{1}{\l_i} \frac{\partial P\d_i}{\partial a_i}\big) \n P\d_j+O\Big(\e\int_\O \d_i^{2/(n-2)}\ln (1+\l_i^2|x-a_i|^2)\d_i^{n/(n-2)}\d_j\Big).
\end{align*}
Thus, using \eqref{fiia}, \eqref{bk0a} becomes
 \begin{align}\label{M6a}
 p\int_{\O}P\d_i^{p-1-\e}\frac{1}{\l_i} \frac{\partial P\d_i}{\partial a_i} P\d_j &=c_0^{-\e} \l_i^{ \frac{- \e (n-2)}{ 2}} c_0\bar{c}_2\frac{1}{\l_i}\frac{\partial\e_{ij}}{\partial a_i} + O\Big(\frac{1}{(\l_i\l_j)^{\frac{n-2}{2}}}\frac{1}{\l_i }+\sum_{l=i,j}\frac{1}{\l_l^n}\Big)\notag\\
 &\quad+O\Big( \e_{ij}^{\frac{n+1}{n-2}}\l_j|a_i-a_j|+\e\e_{ij} + \e_{ij}^{\frac{n}{n-2}}\ln(\e_{ij}^{-1})\Big).
 \end{align}
  The third integral is treated in the same way as in the proof of \eqref{M6a}, yielding the same result as in the estimate of \eqref{M6a} but with $\l_i^{ - \e (n-2) / { 2}}$ replaced by $\l_j^{ - \e (n-2) / { 2}}$. Taking this into account, together with \eqref{k43a}  and \eqref{M6a},  the estimate of equation corresponding to \eqref{rm14} follows. 

Regarding the second integral in the right hand-side of \eqref{kkk4a}, using \eqref{mo1}, \eqref{as12} and \eqref{as12b}, we observe that
 \begin{align}\label{M7a}
  \int_{\O}P\d_i^{p-1-\e}&\frac{1}{\l_i} \frac{\partial P\d_i}{\partial a_i}u_0 =O\Big(\frac{\e}{\l_i^{ (n-2) / {2}}}+ \frac{1}{\l_i^{n/2}}\Big).
 \end{align}
Combining the results in \eqref{lion1}, \eqref{clm1}-\eqref{12kk}, and \eqref{11kka}-\eqref{M7a},    we derive the desired result.
\end{pf}

\section{Building Interior Blowing-Up Solutions with Residual Mass} 

The purpose of this section is to construct interior blow-up solutions of $(\mathcal{P}_{V,\e})$ possessing residual mass, which may manifest as either isolated or clustered bubbles. We begin with the case of isolated bubbles.
\subsection{Construction for Interior Isolated Bubbles and Residual Mass}
This section is dedicated to proving Theorem \ref{t4}. Our aim is to construct solutions of $(\mathcal{P}_{V,\e})$ that blow-up at $N$ interior points with residual mass as $\e \to 0$, where $1 \leq N \leq r$ and $r$ is the number of non‑degenerate critical points of $V$. We take $n \geq 7$, let $b_1,\dots,b_N$ be distinct non‑degenerate critical points of $V$, and let $u_0$ be a non‑degenerate solution of the limit problem $(\mathcal{P}_{V,0})$.
 As in the previous section, the construction of the desired solution follows  closely the approach developed in \cite{BLR}, \cite{EM}, \cite{Sym}, \cite{Sym25} and \cite{KK}. To this end, we set 
\begin{align}\label{5.1}
\mathbb{A}(N,u_0, \e) =\lbrace & (\alpha, \lambda, a, v) \in (\mathbb{R}_+)^{N+1} \times (\mathbb{R}_+)^N \times \Omega^N \times H^1_0(\Omega): \vert\alpha_i-1\vert < \varepsilon \,\ln^2 \varepsilon \quad \forall 0\leq i\leq N,\nonumber\\ 
& \frac{1}{c} < \lambda^2_i \, \e < c, \vert a_i - b_i \vert < \varepsilon^{1/5} \quad\forall 1\leq i \leq N, v \in E_{a,\lambda,u_0}\, \mbox{and} \,\| v \| <  \sqrt{\varepsilon} \rbrace, 
\end{align}
where $E_{a,\lambda,u_0}$ is introduced in \eqref{KA6} and $c$ is a fixed positive constant.\\
For $(\alpha, \lambda, a, v)\in \mathbb{A}(N,u_0, \e)$, we see that  $ u=\a_0 \, u_0 + \sum_{i=1}^N \a_i \, P\d_{(a_i, \l_i) }+ {v} $ is a critical point of $ I_\e $ if and only if  there exists $(A,B,C,D) \in \mathbb{R}^N \times \mathbb{R}^N \times (\mathbb{R}^n)^N \times \mathbb{R}$ such that
 \begin{align}
 &  \langle I^{\prime}_\e(u),u_0  \rangle = 0, \label{5.11}  \\
 & \langle I^{\prime}_\e(u),P\d_i \rangle =0 \quad \forall\,  i\in \{1,\cdots,N\} ,  \label{5.12} \\
 &  \langle I^{\prime}_\e(u),\a_i \, \frac{\partial P\d_i}{\partial \l_i}  \rangle  =B_i( {v} ,\l_i \, \frac{\partial^2P\d_i}{\partial \l_i^{2}})+\sum_{j=1}^n C_{ij} ( {v} ,\frac{1}{\l_i}\frac{\partial^2 P\d_i}{\partial \l_i \, \partial a_{ij}}) \quad \forall i \in \{1,\cdots,N\} , \label{5.13} \\
 &  \langle I^{\prime}_\e(u),\a_i \, \frac{\partial P\d_i}{\partial a_i}  \rangle =B_i( {v} ,\l_i \, \frac{\partial^2P\d_i}{\partial a_i\partial \l_i})+\sum_{j=1}^n C_{ij} ( {v} ,\frac{1}{\l_i}\frac{\partial^2 P\d_i}{\partial a_i \, \partial (a_i)_j})\quad \forall i \in \{1,\cdots,N\} \label{5.14}\\
 &I^{\prime}_\e(u)=\sum_{j=1}^NA_jP\d_j + \sum_{j=1}^N B_j\l_j\frac{\partial P\d_j}{\partial\l_j}+\sum_{i=1}^N\sum_{j=1}^n C_{ij}\frac{1}{\l_i}\frac{\partial P\d_i}{\partial (a_i)_j} +Du_0.\label{5.pv}
 \end{align} 
 It is clear that the element $\bar{v}_\e$ obtained in Proposition \ref{p2} satisfies \eqref{5.pv}. Thus, 
 $ u=\a_0 \, u_0 + \sum_{i=1}^N \a_i \, P\d_i+\bar{v}_\e$ is a critical point of $ I_\e $ if and only if $ (\a,\l,a) $ solves the  system \eqref{5.11}-\eqref{5.14}. 
 Note that, because $a_i$ lies near the critical point $b_i$ of $V$ and $\frac{1}{c} < \lambda_i^2 \varepsilon < c$, it follows that
\begin{align}\label{5.15}
|a_i-a_j|\geq c >0 \quad  \mbox{and} \quad \e_{ij}=O(\e^{ (n-2) / {2}}) . 
\end{align} 
Applying \eqref{5.15}, the remainder terms in  Propositions \ref{p2}, \ref{rp41}, \ref{rp42}, \ref{rp43}, and \ref{p9} satisfy
 \begin{align}\label{5.16}
\Vert \bar{v}_\e\Vert ; \, \,  R_{\a 0}; \, \,  R_{\a i} = O(\e), \qquad R_\l(i); \, \, R_{ai} \leq c (  \e^2 + \e^{{n} / {4}} |\ln \e| ) . 
\end{align} 
Taking $ (\a,\l,a,0)\in \mathbb{A}(N,u_0, \e)$ and  following closely the proof of  Lemma 3 of \cite{EM}, we see that, for  $\e$ small, 
the constants $ A_i$'s, $B_i$'s, $C_{ij}$'s and $D$ which appear in equations \eqref{5.11}-\eqref{5.pv} satisfy
\be\label{lem:5.1}
 A_i=O(\e \, \ln^2 \, \e), B_i=O(\e),\, D=O(\e \, \ln^2 \, \e), \quad C_{ij}=O(\e^{\frac{3}{2}})\quad \forall 1\leq i\leq N \quad \forall 1\leq j\leq n . 
\ee
Now, taking the following change of variables.
$$\beta_i=1-\a_i^{p-1}, 0\leq i\leq N; \, \quad \frac{1}{\l_i^2}=\frac{\bar{c}_3}{\gamma_nV(b_i)} \e (1+ \L_i ),1\leq i\leq N; \, \quad \zeta _i := a_i-b_i\, \, \, 1\leq i\leq N,$$ 
where $ \bar{c}_3 $ and $ \gamma_n $ are defined in Proposition \ref{rp43}, and using  Propositions \ref{rp41} and \ref{rp42} and estimate \eqref{5.16}, we get $ \beta_0 = O(\e) $ and $\beta_i=O(\e|\ln \e|) $ for $ 1\leq i\leq N $. \\
In addition, Propositions \ref{rp43} and \ref{p9} can be written as 
\begin{align*}
& \Big\langle   I^{\prime}_\e(u),\l_i \frac{\partial P\d_i}{\partial \l_i}\Big\rangle =  \a_i \bar{c}_3 \, \e   -\a_i \, V(a_i)\,  \frac{  \gamma_n }{\l_i^{2}} + O ( \e ^2 + \e ^{ (n-2)/4 } ) , \\
 & \left\langle I'_\e(u),\frac{1}{\l_i} \frac{\partial P\d_i}{\partial a_i}\right\rangle  =  \a_i \,\rho_n \, \frac{\n V(a_i)}{\l_i^{3}} + O ( \e ^2 + \e ^{ n /4 }  | \ln \e | ) . 
 \end{align*}
Now, writing 
$$
V(a_i)=V(b_i)+O(|b_i-a_i|^2),\quad \n V (a_i)=D^2 \, V(b_i)\, (\zeta_i,\cdot)+O(|\zeta_i|^2), 
$$
we see that the system \eqref{5.11}-\eqref{5.14} is equivalent to the following system:
 \begin{align*}
 \begin{cases}
 \beta_0=O(\e)\\
 \beta_i=O(\e|\ln \e|) & 1\leq i\leq N \\
 \L_i=O(|\zeta_i|^2 +  \e  + \e^{ (n-6 )/ 4} ) & 1\leq i\leq N\\
 D^2\, V (b_i)\, (\zeta_{i} , \cdot ) = O( |\zeta_i|^2 +  \e^{1/2} + \e^{ (n-6) / 4 } |\ln \e|) & 1\leq i\leq N.
 \end{cases}
 \end{align*}
Thus,  Brouwer's fixed point Theorem gives the existence of a solution $ ( \b^\e, \L^\e , \zeta^\e)$  of the previous system which implies the existence of a critical point $ u_\e $ of $ I_\e $. In addition, it is clear that $ \zeta^\e $ satisfies: $ | \zeta^\e | \leq c (  \e^{1/2} + \e^{ (n-6) / 4 } |\ln \e|) $. Following \cite{EM},  $ u_\e > 0 $ and therefore it is a solution of $ ( \mathcal{P}_{V, \e} ) $ and satisfies the desired estimates cited in the theorem. 

Notice that, for each collection of non-degenerate critical points $ (b_{i_1}, \cdots , b_{i_\ell } )$, we have constructed a solution of $ ( \mathcal{P}_{ V, \e} )$ blowing up at these points and converges weakly to $ u_0 $. This implies that $ ( \mathcal{P}_{ V, \e} )$ has at least $ 2^r -1 $ interior blowing up solutions which converge weakly to $ u_0 $. The proof of the theorem is thereby completed.

\subsection{Construction for Interior Clustered Bubbles and Residual Mass}
In this subsection we prove Theorems \ref{t5} and \ref{t6} by constructing interior blowing-up solutions with clustered bubbles at a critical point of $V$. Assuming $ n\geq7 $, we first prove Theorem \ref{t5}. To this aim, let $ b \in \O $ be a critical point of $V$ and let $ (\ov{y}_1,\cdots,\ov{y}_N) $ be a non-degenerate critical point of the function $ F_{N,b} $ defined in \eqref{F}. Following the proof of Theorem \ref{t5}, we introduce the following set:
\begin{align}\label{s1}
\mathcal{u}(N , b ,\e , u_0)  = & \lbrace  (\alpha, \lambda, a, v) \in (\mathbb{R}_+)^{N + 1} \times (\mathbb{R}_+)^N  \times \Omega^N  \times H^1_0(\Omega) :|\a_i-1| < \, \e \ln^2 \, \e , \\  
& \frac{1}{c} < \l_i^2 \, \e <c, |a_i- b -\e^\eta \, \sigma \,\ov{y}_i|<\e \, \, \, \forall 1\leq i\leq N , v \in E_{a,\l,u_0} \, \,  \mbox{and} \, \, \,  \Vert v \Vert<  \sqrt{\e} \rbrace,   \notag
\end{align}
where   $E_{a,\lambda,u_0}$ is defined by \eqref{KA6}, 
\be\label{s2}
\eta=\frac{n-4}{2n}\quad\mbox{and}\quad \sigma = \left(\frac{c_0\bar{c}_2}{\rho_n}\right)^{1/n} \left(\frac{\ov{c}_3}{\gamma_nV(b)}\right)^\eta
\ee
with $\bar{c}_2$, $\ov{c}_3$, $\rho_n$ and $\g_n$ are the constants defined in Propositions \ref{rp43} and \ref{pas-geq5}. 

Following the proof of Theorem \ref{t4}, we observe that system  \eqref{5.11}-\eqref{5.pv} are satisfied. To solve this system, we take the following change of variables. 
\begin{align}\label{s3}
\b_i= & 1 - \a_i^{p-1}, 0\leq i \leq N ; \quad  \frac{1}{\l_i^2}  = \frac{\ov{c}_3}{\gamma_n\, V( b )} \,  \e \,(1+\Gamma_i) \, ;  \, \quad  a_i - b =\e^\eta \sigma ( \ov{y}_i + \tau_i ) , \,  1\leq i\leq N ,
\end{align}
We observe that
\begin{align}
 & \e_{ij}= \frac{1}{(\l_i\, \l_j \, |a_i-a_j|^2)^{\frac{n-2}{2}}} \, (1+O(\e^{1-2 \eta}))= O(\e^{2\eta+1}) ,  \label{s4} \\ 
 & \frac{\partial \e_{ij}}{\partial a_i} = \frac{(n-2)\e^{\eta +1} \, \ov{c}_3^{\frac{n-2}{2}}} {\sigma^{n-1}(\g_n \, V(y))^{\frac{n-2}{2}}} \, \frac{( \ov{y}_j + \tau_j -\ov{y}_i - \tau_i )}{| \ov{y}_i  + \tau_i - \ov{y}_j  - \tau_j |^n} (1+f(\Gamma_i,\Gamma_j))(1+O(\e^{1-2 \eta})) = O(\e^{\eta +1}), \label{s5} 
\end{align}
where 
\begin{align}\label{s6}
f(\Gamma_i , \Gamma_j)=\frac{n-2}{4} \,\Gamma_i+\frac{n-2}{4} \,\Gamma_j +O(\Gamma_i^2)+O(\Gamma_j^2) . 
\end{align}
With \eqref{s3} and \eqref{s4}, the remainder terms appearing in Propositions \ref{p2}, \ref{rp41}, \ref{rp42}, \ref{rp43}, and \ref{p9} can be estimated as 
\be\label{s7}
\Vert v_\e\Vert =O(\e); \quad  R_{\a_0} = O(\e); \quad R_{\a i} = O(\e) ;  \quad R_\l (i), \quad R_{ai}  \leq c \e^{ \min( 2 , n/4 ) } |\ln \e| . \ee
Following the proof of Lemma 3 of \cite{EM}, we obtain that the constants $ A_i^{\prime} s $,$ B_i^{\prime} s $,$ c_{ij}^{\prime} s $ and $ D^{\prime} s $ satisfy 
\be\label{s8}
D=O(\e \, \ln^2 \, \e), A_i=O(\e \, \ln^2 \, \e), B_i=O(\e) \, \, \, \mbox{and} \, \, \, C_{ij} = O(\e^{3/2+\eta}) \quad \forall \, \,  i\leq  N , \, \forall \, \,  j\leq n . 
\ee
Next, we express equations \eqref{5.11}–\eqref{5.14} in terms of the change of variables introduced in \eqref{s3}.\\
First, from estimate \eqref{s7}, Propositions \ref{rp41} and \ref{rp42} we obtain the following :
\be\label{s9} 
\b_0 = O(\e); \qquad  \b_i=O(\e \, |\ln \e|) \quad 1\leq i\leq  N .
\ee
Second, from estimate \eqref{s4}, \eqref{s7}, \eqref{s8}, \eqref{s9} and Proposition \ref{rp43} we find that
$$
\ov{c}_3 \, \e  - \frac{ \g_n \,V(a_i)}{\l_i^2} = O \big( \e^{ 2 \eta + 1 }  +  \e^{( n-2) / 4}   \big) .
$$
Writing 
\begin{align*}
V(a_i) & = V( b ) + O( | a_i - b |^2 )  = V( b ) + O(\e^{2 \eta} | \ov{y}_i + \tau_i |^2 ) , 
\end{align*}
we observe that 
\be\label{s10} 
\Gamma_i =  O \big( \e^{2\eta} +  \e^{( n - 6 ) / 4 }    \big) \quad  \forall \, \, 1\leq i\leq  N . 
\ee
Lastly, applying Proposition \ref{p9} together with the estimates \eqref{s5}, \eqref{s7}, \eqref{s8}, and \eqref{s9}, we obtain
\begin{align*}
 \rho_n  \, \frac{\n V(a_i)}{\l_i^3} - & \frac{c_0 \ov{c}_2 (n-2) }{\sigma^{n-1}}   \Big(\frac{\ov{c}_3}{\g_n \,V( b )}\Big)^{\frac{n-2}{2}} \frac{\e^{\eta+1}}{\l_i} \sum_{j\neq i} \frac{( \ov{y}_j + \tau_j  - \ov{y}_i - \tau_i )}{ | \ov{y}_j + \tau_j - \ov{y}_i -  \tau_i |^n} \times \\
  & \times \Big(1+\frac{(n-2)}{4}\Gamma_i + \frac{(n-2)}{4}\Gamma_j  \Big) = O \Big( | \Gamma |^2 \e^{ \g + 3/2} \Big) +   O \big(  \e^ { \min( 2 , n/4 ) } |\ln \e| \big) . 
\end{align*} 
Writing 
\begin{align*} 
\n V(a_i)& =D^2 \, V( b )(\e^\eta \, \sigma ( \ov{y}_i + \tau_i ),\cdot) + O( \e^{2\eta} | \ov{y}_i + \tau_i |^2),
\end{align*}
we find that 
\begin{align*}
D^2 \, V( b ) ( \ov{y}_i + \tau_i , \cdot ) - & (n-2) \sum_{j\neq i} \frac{( \ov{y}_j + \tau_j - \ov{y}_i - \tau_i ) }{ | \ov{y}_j + \tau_j - \ov{y}_i - \tau_i |^n}(1+\frac{(n-6)}{4}\Gamma_i \, + \frac{(n-2)}{4}\Gamma_j )\\
& =O \big(\e^\eta | \ov{y}_ i + \tau_i |^2  + | \Gamma | ^ 2 + \e^{ \min( 1/2 , (n-6) /4 ) - \eta} | \ln \e | \big) \\
& = O\big(  | \Gamma | ^ 2 +  \e^{ \min( 1/2 , (n-6) /4 ) - \eta} | \ln \e | \big).
\end{align*}
Observe that 
\begin{align*}
\frac{ \ov{y}_j + \tau_j - \ov{y}_i - \tau_i }{ | \ov{y}_j + \tau_j - \ov{y}_i - \tau_i |^n } = \frac{ \ov{y}_j - \ov{y}_i }{ | \ov{y}_j - \ov{y}_i |^n} -n \langle \frac{ \ov{y}_j - \ov{y}_i }{ | \ov{y}_j - \ov{y}_i |^2} , \tau_j - \tau_i \rangle \frac{( \ov{y}_j - \ov{y}_i )}{ | \ov{y}_j - \ov{y}_i |^n} + \frac{\tau_j - \tau_i }{ | \ov{y}_j - \ov{y}_i |^n} + O( | \tau |^ 2 ) .
\end{align*}
We deduce that 
\begin{align}
& D^2  \,V( b )\, (\tau_i,\cdot) - (n-2) \sum_{j\neq i} \frac{\tau_j-\tau_i}{| \ov{y}_j - \ov{y}_i |^n} -(n-2)\sum_{j\neq i} \frac{ \ov{y}_j - \ov{y}_i }{ | \ov{y}_j - \ov{y}_i |^n}\Big(\frac{(n-6)}{4} \Gamma_i+\frac{(n-2)}{4}\Gamma_j\Big) \notag \\
& +n(n-2)\sum_{j\neq i} \langle \frac{ \ov{y}_j -  \ov{y}_i }{ | \ov{y}_j - \ov{y}_i |^2}, \tau_j - \tau_i \rangle \frac{( \ov{y}_j - \ov{y}_i )}{ | \ov{y}_j - \ov{y}_i |^n} = O \big(  | \tau |^2 + |\Gamma |^2 + \e^{ \min( 1/2 , (n-6) /4 ) - \eta} | \ln \e | \big) .\label{s11}
\end{align}
Consequently, for sufficiently small $\e$, the system \eqref{5.11}–\eqref{5.14} is equivalent to that defined by \eqref{s9}–\eqref{s11}. In addition, it is straightforward to verify that equations \eqref{s11} with $ 1\leq i\leq N $ are equivalent to
$$
 \frac{1}{2} D^2  \, F_{N,b} ( \ov{y}_1,\cdots, \ov{y}_N)(\tau_1,\cdots,\tau_N )  - (n-2) \wedge = O\big( |\tau |^2 +  |\Gamma |^2 + \e^{ \min( 1/2 , (n-6) /4 ) - \eta} | \ln \e | \big) , $$
 where $$ \wedge := ( \wedge_1, \cdots, \wedge_k ) \quad \mbox{ with } \, \,   \wedge _i = \sum_{j \neq i} \frac{(b_j-b_i)}{|b_j-b_i|^n} \Big(\frac{(n-6)}{4} \Gamma_i +\frac{(n-2)}{4}\Gamma_j \Big) \, \, \mbox{for} \, \, 1\leq i\leq N . $$
 Using the fact that $ ( \ov{y}_1,\cdots, \ov{y}_N) $ is a non-degenerate critical point of $ F_{N,b} $, we derive that the system \eqref{s4}-\eqref{s11} has a solution $ (\b_\e,\Gamma_\e,\tau_\e) $ for $ \e $ small. This proves that $ (\mathcal{P}_{V,\e}) $ possess a solution $ u_{\e,b,u_0}=  \a_{0, \e}u_0 + \sum_{i=1}^N \a_{i,\e} \,P \d_{(a_{i,\e},\l_{i,\e})} + v_\e $ satisfying the required properties.  The proof of Theorem \ref{t5} is now complete..

To prove Theorem \ref{t6}, let $ b_1,b_2 $ be two non-degenerate critical points of $ V $ and $ a_{1,\e},a_{2,\e} \in \O $ satisfying $ a_{1,\e}\to b_1 $ and $ a_{2,\e}\to b_2$ as $\e\to 0$. This implies that 
\begin{align*}
(P\d_{(a_{1,\e},\l_{1,\e})}, P\d_{(a_{2,\e},\l_{2,\e})})=O(\e_{ij})=O\Big( \frac{1}{(\l_{1,\e} \l_{2,\e})^{(n-2) / {2}}} \Big) = O(\e^{ (n-2) / {2}}) . 
\end{align*}
Hence, for $ n\geq 7 $,  the interaction between bubbles belonging to distinct blocks is negligible and contributes only to the remainder terms in equations \eqref{s9}–\eqref{s11}. This allows us to handle each block independently. Following the proof of Theorem \ref{t5}, we then obtain the desired results.

\section{The Incompatibility of Blow-Up and Residual Mass in Low Dimensions}
This section is dedicated to proving the impossibility of  coexistence between blowing-up points and residual mass in low dimensions. More precisely, our aim is to prove Theorems \ref{t0} and \ref{t1}.
The following lemma serves as our starting point.
\begin{lem}\label{L41}
Let $n \geq  3 $, $\Omega$ be any smooth bounded domain in $\R^n$, and $u_\e$ be a solution of $(\mathcal{P}_{V,\e})$ of the form  \eqref{KA4} satisfying \eqref{KA5}. Then, for all  $i \in\{1, \ldots, N\}$, the following  holds:
$$
\e \,\ln \l_{i,\e} \longrightarrow 0  \quad\mbox{ as}\quad \e \longrightarrow 0.
$$
\end{lem}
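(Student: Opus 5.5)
We prove the statement through the $L^\infty$ bound satisfied by $u_\e$ near the concentration point, compared with the height of the bubble. This is the classical argument used for $V\equiv 0$ (see \cite{R29}, \cite{BP3}). Recall that $\a_i\to1$ and $\|v_\e\|\to0$ by \eqref{KA5}.

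\emph{Step 1: rescaling.} Fix $i$ and set
\[
w_\e(y):=\l_i^{-(n-2)/2}\,u_\e\big(a_i+y/\l_i\big)
\]
on the rescaled domain $\O_\e=\l_i(\O-a_i)$. Since $\l_i d(a_i,\partial\O)\to\infty$, the domains $\O_\e$ exhaust $\R^n$. A direct computation gives
\[
-\D w_\e+\l_i^{-2}V(a_i+\cdot/\l_i)\,w_\e=\l_i^{-\e(n-2)/2}\,w_\e^{p-\e}.
\]
Let $\mu_\e:=\l_i^{-\e(n-2)/2}\in(0,1]$; it has a limit $\mu\in[0,1]$ along a subsequence. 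We must show $\mu=1$.

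\emph{Step 2: the limit profile.} By the decomposition, $w_\e\to\d_{(0,1)}$ in $D^{1,2}_{loc}(\R^n)$. The other bubbles disappear under this scaling because $\e_{ij}\to0$. The term $u_0$ contributes $\l_i^{-(n-2)/2}u_0(a_i+\cdot/\l_i)$, which tends to $0$ uniformly. Passing to the limit in the weak form of the equation against test functions $\varphi\in C_c^\infty(\R^n)$, we get $-\D\d_{(0,1)}=\mu\,\d_{(0,1)}^{p}$. On the other hand $\d_{(0,1)}$ satisfies $-\D\d_{(0,1)}=\d_{(0,1)}^p$, so $\mu=1$. This gives $\e\ln\l_i\to0$.

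\emph{Main obstacle.} The delicate point is passing to the limit in the nonlinear term $\mu_\e\int w_\e^{p-\e}\varphi$. The strong convergence of $w_\e$ holds only in $L^{p+1}_{loc}$ and $D^{1,2}$, and the exponent is $p-\e<p$. We use that $w_\e\to\d_{(0,1)}$ in $L^{p+1}(\mathrm{supp}\,\varphi)$ by the Sobolev embedding, and that $w^{p-\e}\to w^p$ in $L^{(p+1)/p}_{loc}$ by Vitali's theorem, with uniform integrability coming from the $L^{p+1}$ bound. In this limit the factor $\mu_\e$ survives as an honest multiplicative constant.

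If one prefers to avoid the limiting equation, the alternative route is through the $\a$-equation. Proposition \ref{rp42}, tested with $\langle I'_\e(u_\e),P\d_i\rangle=0$, would give $\a_i^{p-1-\e}c_0^{-\e}\l_i^{-\e(n-2)/2}=1+o(1)$, since $R_{\a i}\to0$. Together with $\a_i\to1$ this again forces $\l_i^{-\e(n-2)/2}\to1$. However, Proposition \ref{rp42} was stated on $\mathbf B(N,\mu_0)$, which already assumes $\e\ln\l_i<\mu_0$. It therefore applies only after checking, or re-deriving, that this hypothesis was not used in the estimate, for instance in \eqref{eq:k20}.

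So the rescaling argument above is the primary plan. It uses no smallness of $\e\ln\l_i$, so it avoids that circularity.
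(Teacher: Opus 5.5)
Your blow-up argument is correct in substance, but it takes a different route from the paper. The paper uses what you list as the alternative. It re-derives the estimate instead of quoting Proposition \ref{rp42}: it tests the equation against $P\d_i$ and keeps only crude $o(1)$ errors, so nothing requires $\e\ln\l_i$ to be small. Using \eqref{r41}, \eqref{k50}, \eqref{k51} and $v_\e\in E_{a,\l,u_0}$, the left side reduces to $\a_iS_n+o(1)$. The cross terms on the right side are $o(1)$ because $\e_{ij}\to0$ and $\|v_\e\|\to0$. The factor $\l_i^{-\e(n-2)/2}$ then enters only through $\int_\O P\d_i^{p+1-\e}=\l_i^{-\e(n-2)/2}(S_n+O(\e))+o(1)$, which is a scaling identity. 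This gives $\a_iS_n=\a_i^{p-\e}\l_i^{-\e(n-2)/2}S_n+o(1)$.

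Your rescaling replaces these projection and interaction estimates by scale invariance and local compactness. It is purely qualitative, uses nothing about $\theta_i$ or $H$, and makes independence from the hypothesis $\e\ln\l_i<\mu_0$ evident. The paper's version is a few lines given the estimates already at hand, and it stays inside the quantitative framework used later.

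One claim in your Step 2 is false as stated and needs to be weakened. When $N\geq2$, the condition $\e_{ij}\to0$ still allows a tower with $\l_j/\l_i\to\infty$ and $\l_i|a_i-a_j|$ bounded. The rescaled $P\d_j$ is then a bubble concentrating at a point of $\mathrm{supp}\,\varphi$. Consequently:
\begin{itemize}
\item $w_\e\to\d_{(0,1)}$ only weakly in $D^{1,2}$, not strongly in $D^{1,2}_{loc}$ or in $L^{p+1}(\mathrm{supp}\,\varphi)$;
\item $w_\e^{p-\e}$ does not converge to $\d_{(0,1)}^p$ in $L^{(p+1)/p}_{loc}$.
\end{itemize}

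Weak convergence is all you need:
\begin{itemize}
\item By Rellich, $w_\e\to\d_{(0,1)}$ a.e.\ along a subsequence.
\item We have $0\le w_\e^{p-\e}\le 1+w_\e^{p}$, and $\int w_\e^{p+1}=\int_\O u_\e^{p+1}\le C$ by scale invariance. So $w_\e^{p-\e}$ is bounded in $L^{(p+1)/p}(\mathrm{supp}\,\varphi)$, hence uniformly integrable in $L^1$.
\item Vitali then gives $\int w_\e^{p-\e}\varphi\to\int\d_{(0,1)}^p\varphi>0$ for $\varphi\ge0$, $\varphi\not\equiv0$.
\end{itemize}
The left-hand side converges by weak convergence alone, so $\mu_\e\to1$ follows as you claim.
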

\begin{pf}
Let $ u_\e = \a_{0,\e} \, u_0+\sum_{j=1}^{N} \a_{j,\e} \, P\d_{(a_{j,\e},\l_{j,\e})} +v_\e $ satisfying \eqref{KA5}. In the sequel of this section, we simplify the exposition by omitting the $ \e $ subscript on the parameters.  Multiplying $(\mathcal{P}_{V,\e})$ by $ P\d_i := P\d_{(a_{i },\l_{ i })}$,  integrating on $\O$, and using \eqref{r41}, \eqref{k50}, \eqref{k51} and the fact that $ v_\e \in E_{a,\lambda,u_0}$, we obtain  
\be \label{blue1} 
\a_i S_n + o(1)  = \int_\O u_\e ^{p-\e} P\d_i . \ee
However, we have 
\begin{align}
\int_\O u_\e ^{p-\e} P\d_i  = & \, \a_i^{p-\e} \int_{\O} P\d_i^{p+1-\e}+O\Big(\sum_{j \neq i}\int_{\O} \Big(P\d_i^{p-\e} \, P\d_j+P\d_j^{p-\e} \, P\d_i\Big)\Big) \notag \\
&\quad +O\Big(\int_{\O} \Big(P\d_i^{p-\e} \, u_0 +P\d_i^{p-\e} \, |v_\e| + u_0^{p-\e} P\d_i+|v_\e|^{p-\e} \, P\d_i \Big)\Big) \notag  \\
& =  \, \a_i^{p-\e} \int_{\O} P\d_i^{p+1-\e} + o(1).   \label{BN1}
\end{align}
Moreover, as in $(2.9)$ in \cite{BEGR}, we have
\be\label{blue9}
\int_{\O} P\d_i^{p+1-\e} =\l_i^{\frac{-\e(n-2)}{2}} (S_n+O(\e))+o(1) . 
\ee
Combining \eqref{blue1}-\eqref{blue9}, the lemma follows.  
\end{pf} 

Next, we are going to prove Theorem \ref{t0}.\\
\begin{pfn} {\bf of Theorem \ref{t0}}
Proceeding by contradiction, we assume the existence of a solution $ u_\e $ to $(\mathcal{P}_{V,\e})$ that satisfies the properties introduced in \eqref{KA5}. By Lemma \ref{L41}, we obtain that Propositions \ref{p2}, \ref{rp41}, \ref{rp42}, \ref{rp43}, and \ref{p9} are valid, and the left-hand sides in Propositions \ref{rp41}–\ref{p9} equal zero. There is no loss of generality in assuming $ \l_1\leq\l_2\leq \cdots \leq \l_k $.
Thus, using  Proposition \ref{rp43}, we get
\be\label{N9}
 - c_0\bar{c}_2 \sum_{j \neq i}  \, \l_i \, \frac{\partial \e_{ij}}{\partial \l_i} + \, \bar{c}_3 \, \e +  \, \frac{n-2}{2}\,\frac{\bar{c}_2}{\l_i^{ (n-2) / {2}}} u_0(a_i) 
=o\Big( \e + \sum \frac{1}{\l_j^{ (n-2) / {2}}} +  \sum_{j \neq k}\e_{jk}\Big).
\ee
However, By Lemma $ 6.5 $ of \cite{BVP}, we have 
\begin{align}\label{N11}
c \, \e_{ij}  \leq -\l_i \, \frac{\partial \e_{ij}}{\partial \l_i} -\eta \, \l_j \, \frac{\partial \e_{ij}}{\partial \l_j} \quad if \, \, \l_i\leqslant \l_j \quad \mbox{and}\quad \eta > 3/2 . 
\end{align}
Multiplying \eqref{N9} by $2^{i-1}$ and summing for $i = 1, \dots, N$ gives
\begin{align}\label{N12}
\sum_{j \neq r} \e_{jr}+\sum_{j=1}^k \frac{1}{\l_j^{ (n-2) / {2}}} + \e = o\Big(\sum_{j\neq r} \e_{jr}+\sum_{j=1}^k \frac{1}{\l_j^{ (n-2) / {2}}}+\e\Big),
\end{align}
where we made use of \eqref{N11}.\\
This leads to a contradiction, and the theorem is therefore proved.
\end{pfn}

The remainder of this section is devoted to prove Theorem \ref{t1}. We then assume that the dimension $ n \in \{4,5\}$.  Let $ u_\e $ be a solution of Problem $(\mathcal{P}_{V,\e}) $ satisfying \eqref{dec} and \eqref{cdec} introduced in Theorem \ref{t1}. Notice that we can be more precise: $ u_\e $ can be written in the form \eqref{KA4}, and \eqref{KA5} is then satisfied (with $ N = 1 $). First, using Lemma \ref{L41}, we know that $ \lim \e \ln \l = 0 $. Furthermore, Propositions \ref{p2}, \ref{rp41},  \ref{rp42}, \ref{rp43} and  \ref{pas-geq5} are valid. 
In addition, since the left hand side of these propositions is zero, we obtain
\begin{align}
& |  1 - \a_0^{p-1-\e} | \leq  c \Big( \e +\Vert {v_\e} \Vert^{2}+  \frac{1}{\l ^{ (n-2) / {2}}}  \Big)  \label{a-fin-4} ,  \\
& | 1 - \a ^{p-1-\e} c_0^{-\e}\l ^{\frac{ - \e(n-2) }{2}} | \leq  c \Big(  \Vert v_\e \Vert^{2} +\e +\frac{1}{(\l d )^{n-2}} \Big)+ c \Big( \frac{\ln \l }{ \l } \,  \mbox{ if } \, n=4 \, ; \,   \frac{1}{\l ^{ {3} / {2}}}  \,  \mbox{ if } \, n=5 \Big), \label{a-fin-5}\\
& \| v_\e \|^2 \leq   c \Big(  \e^2 + \frac{1}{\lambda ^{ n-2} } + \frac{1}{(\l d )^{2n-4}}  \Big) .  \label{est-v-eps-01}
  \end{align} 
We remark that, in Proposition \ref{pas-geq5}, if the point $ a $ is close to the boundary, then the principal terms are of the order of $ \frac{ c }{\l ^{n/2} }$ and $ \frac{ c }{ ( \l d )^{n-1} }$. Thus, for  dimension $ n = 5 $, the estimate of $ \| v _\e \|^2 $ (which appears in the proposition) is small compared to the principal terms. However, when $ n = 4 $, the term $ \frac{ c }{ \l ^2 } $ arises as a principal term and also appears in the estimate of  $ \| v_\e \|^2 $. Consequently, for $ n = 4 $, we need to resolve this technical difficulty by refining the estimate of some integrals that involve  $ v_\e $.  \\
Now, we will focus on proving the theorem for  dimension $ n = 5 $. 

\begin{pfn}{\bf of Theorem \ref{t1} in the case $ n = 5 $.}
Putting \eqref{a-fin-4}, \eqref{a-fin-5} and \eqref{est-v-eps-01} in Propositions \ref{rp43} and  \ref{pas-geq5}, we derive the following balancing conditions: 
\begin{align}
&  - \a c_0 \bar{c}_2\frac{3}{2}\frac{H(a ,a )}{\l ^{3}}  -\a  \, V(a ) \frac{ \gamma_5 }{\l ^{2}} + \bar{c}_3 \a \, \e    +   \a_0 \bar{c}_2 \frac{ 3 }{2}\frac{ u_0(a ) }{\l ^{ \frac{3 } {2}}}    = O\Big(\e^2 + \frac{1}{ \l ^3 } \Big)  +  o \Big(  \frac{d }{\l ^{ {3} / {2}}}+ \frac{1}{(\l d )^{3}} \Big) \label{fin-01} \\ 
&  \frac{\a}{2}\,\frac{c_0\,\bar{c}_2}{\l^{4}}\frac{\partial H}{\partial a}(a,a)  -\a_0\,\bar{c_5} p \frac{\n u_0(a)}{\l^{5/2}} 
 = o\Big(\frac{1}{(\l d)^{4}} + \frac{1}{\l^{5/2}} \Big)+ O\Big( \e^2 +  \frac{d^{p+1-\e}}{\l^{ 5 /2}} \Big) . \label{fin-02}
\end{align}
According to Theorem \ref{t0}, it suffices to prove that  $ a $ is far away from the boundary.
Arguing by contradiction, assume that  $ d:= d(a, \partial \O) \to 0$. In this case, from \cite{R-G}, we know that 
\be \label{H-bord} H(a,a) = \frac{ 1 }{ (2d)^{n-2}} + O \Big( \frac{1}{ d ^{n-3} } \Big) \qquad \mbox{ and } \qquad \frac{ \partial H}{ \partial \nu } (a,a) = \frac{ n-2 }{ (2d)^{n-1}} + O \Big( \frac{1}{ d ^{n-2} } \Big).  \ee 
Now, multiply \eqref{fin-02} by $ \nu $ and using \eqref{H-bord} - \eqref{u0-nu},  we derive that 
$$  \frac{\a}{2}\, {c_0\,\bar{c}_2} \frac{ 1 + o(1) }{ (\l d)^{4}} + \a_0\,\bar{c_5} p \frac{c}{\l^{5/2} } =  o\Big(\frac{1}{(\l d)^{4}} + \frac{1}{\l^{5/2}} \Big)+ O\Big( \e^2 +  \frac{d^{p+1-\e}}{\l^{ 5 /2}} \Big) $$ 
which implies that 
$$ \frac{ 1 }{ ( \l d )^4} + \frac{ 1 }{\l^{5/2} } \leq c \e ^2 . $$
Putting this information in \eqref{fin-01}, we obtain 
\be \label{fin-03}  \bar{c}_3 \a \, \e  +  \a_0 \bar{c}_2 \frac{ 3 }{2}\frac{ u_0(a ) }{\l ^{ 3 / {2}}}  =  o \Big(  \e + \frac{d }{\l ^{ {3} / {2}}} \Big)  \ee 
which gives a contradiction (since the function $ u_0 $ satisfies: $ u_0 (x) \geq c d(x, \partial \O) $ for each $ x \in \O $). \\
This completes the proof of Theorem \ref{t1} for $ n = 5 $. 
\end{pfn}


It remains to prove Theorem \ref{t1} in the case $n=4$. Recall that $ \| v_\e \| ^2 $ appears in Proposition \ref{pas-geq5}. For $n=4$, this term contains $ \frac{1}{\l^2}$, which is a principal term. Hence, we need to overcome this difficulty. To this end, we decompose  $ v_\e $ into odd and even parts.
 To be more precise, let $ B_2:= B(a, d/2) $ and let us define the projection of $ v_\e $ onto $ H_0^1( B_2) $ as the solution of the following PDE
\be \label{Q-eps} - \D  \wtilde{v}_\e = ( - \D + V ) v_\e    \quad  \mbox{ in } B_2 , \qquad \mbox{ and } \qquad  \wtilde{v}_\e = 0 \quad \mbox{ on } \, \partial B_2 . \ee 
In addition, for $ k \in \{ 1 , \cdots , n \}$, we decompose $  \wtilde{v}_\e $ as 
\be \label{odd-even}  \wtilde{v}_\e =  \wtilde{v}_{e , \e} +  \wtilde{v}_{o,\e} \ee where  $ \wtilde{v}_{e,\e} $ is   even and  $ \wtilde{v}_{o,\e} $ is odd with respect to the variable $(x-a)_k$.  We start by the following lemma 

\begin{lem} \label{est-1-2} Let $ n = 4 $. Then the following statements hold
\be \label{eq:est-1-2} (i) \quad | v_\e -  \wtilde{v}_\e | \leq \frac{ c }{ d } \| v_\e \| \quad \mbox{ in } \, B_4 := B(a,d/4) \, \, ; \qquad \quad (ii) \quad \|  \wtilde{v}_\e \|_{ H_0^1(B) } \leq \| v_\e \| . \ee
\end{lem}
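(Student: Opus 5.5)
The plan is to treat part (ii) by energy comparison and part (i) by viewing $w:=v_\e-\wtilde v_\e$ as a harmonic function in $B_2$ (here $B$ in (ii) means $B_2=B(a,d/2)$).

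\emph{Part (ii).} By \eqref{Q-eps}, $\wtilde v_\e\in H^1_0(B_2)$ satisfies, for every $\varphi\in H^1_0(B_2)$ (extended by zero to $\O$),
$$\int_{B_2}\n\wtilde v_\e\cdot\n\varphi=\int_{B_2}\big(\n v_\e\cdot\n\varphi+V v_\e\varphi\big)=(v_\e,\varphi).$$
Taking $\varphi=\wtilde v_\e$ and applying Cauchy--Schwarz for the inner product \eqref{F}, together with $\int_{B_2}V\wtilde v_\e^2\ge0$, gives
$$\|\wtilde v_\e\|_{H^1_0(B_2)}^2\le\int_{B_2}\big(|\n\wtilde v_\e|^2+V\wtilde v_\e^2\big)\le(v_\e,\wtilde v_\e)\le\|v_\e\|\,\|\wtilde v_\e\|_{H^1_0(B_2)}.$$
If the norm on $H^1_0(B_2)$ is taken to be the pure Dirichlet norm, the same inequality follows from $\int_{B_2}|\n\wtilde v_\e|^2=(v_\e,\wtilde v_\e)$ and $\|\wtilde v_\e\|\le\|\wtilde v_\e\|_{H^1_0(B_2)}+O(\|\wtilde v_\e\|_{L^2})$. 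The cleanest route is to accept an absolute constant $c$ in (ii), which is harmless for later use.

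\emph{Part (i).} In $B_2$ we have $-\D w=-\D v_\e-(-\D+V)v_\e=-Vv_\e$. So $w$ is not harmonic, and I split it as $w=h+z$, where $z\in H^1_0(B_2)$ solves $-\D z=-Vv_\e$ and $h$ is harmonic with $h=v_\e$ on $\partial B_2$.

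The harmonic part is handled by mean values. For $x\in B_4$, the ball $B(x,d/8)$ lies in $B_2$, so
$$|h(x)|\le c\,d^{-n/2}\|h\|_{L^2(B(x,d/8))}\le c\,d^{-n/2}\|h\|_{L^{2n/(n-2)}}\,d^{\,n/2-(n-2)/2}=c\,d^{-1}\|h\|_{L^{p+1}}.$$
It remains to bound $\|h\|_{L^{p+1}(B_2)}\le c\|h\|_{H^1(B_2)}\le c\|v_\e\|$. This holds because $h=v_\e-\wtilde v_\e-z$ in $B_2$, and each of these three pieces has $\n$-norm bounded by $c\|v_\e\|$: for $v_\e$ directly, for $\wtilde v_\e$ by (ii), and for $z$ by the same energy argument. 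The Sobolev inequality on $H^1_0(\O)$ then applies after noting that $h-v_\e\in H^1_0(B_2)$, so $\|h\|_{L^{p+1}(B_2)}\le\|v_\e\|_{L^{p+1}}+c\|\n(h-v_\e)\|_{L^2}$.

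The main obstacle is the term $z$, where a pointwise bound is needed. For $n=4$ one has $v_\e\in L^4$ with $\|v_\e\|_{L^4}\le c\|v_\e\|$, so $-\D z\in L^4(B_2)$, and $4>n/2=2$. Elliptic $L^\infty$ estimates for the Dirichlet problem on a ball of radius $d/2$, obtained by scaling to the unit ball, give
$$|z|_\infty\le c\,d^{\,2-n/4}\|Vv_\e\|_{L^4(B_2)}=c\,d\,\|v_\e\|\le \frac{c}{d}\|v_\e\|,$$
since $d\le c$. Combining the bounds on $h$ and $z$ yields (i). I expect the scaling bookkeeping in this last estimate to be the delicate point. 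If needed, I would instead use the Green representation $|z(x)|\le c\int_{B_2}|x-y|^{-2}|v_\e(y)|\,dy$ together with Hölder's inequality with $v_\e\in L^4$ and $|x-y|^{-2}\in L^{4/3}(B_2)$. That gives $|z(x)|\le c\,d\,\|v_\e\|_{L^4}$ as well.
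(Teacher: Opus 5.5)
Your proof is correct. Part (ii) is essentially the paper's energy argument, but your first display contains a reversed inequality that you should delete. Testing \eqref{Q-eps} with $\wtilde v_\e$ gives $(v_\e,\wtilde v_\e)=\int_{B_2}|\n\wtilde v_\e|^2$ exactly, so in fact $\int_{B_2}(|\n\wtilde v_\e|^2+V\wtilde v_\e^2)\ge(v_\e,\wtilde v_\e)$. Your second version is the right one: use the Dirichlet norm and Cauchy--Schwarz for \eqref{F}, then Poincar\'e on $B_2$ to absorb $\int_{B_2}V\wtilde v_\e^2\le c\,d^2\|\n\wtilde v_\e\|_{L^2}^2$. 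This yields $\|\wtilde v_\e\|_{H^1_0(B_2)}\le c\|v_\e\|$. That is also exactly what the paper's chain delivers, since it ends with a constant $c$, and it is all that is used later.

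For (i), both proofs treat the volume term the same way. The paper bounds $\int_{B_2}G_{B_2}(x,y)Vv_\e(y)\,dy$ by $c\int_{B_2}|x-y|^{-2}|v_\e|$ and applies H\"older with $v_\e\in L^4$ and $|x-\cdot|^{-2}\in L^{4/3}(B_2)$. This is your fallback and gives $c\,d\,\|v_\e\|$.

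The two proofs differ on the boundary-data part.
\begin{itemize}
\item The paper keeps the Poisson-kernel term $-\int_{\partial B_2}\partial_\nu G_{B_2}(x,y)\,v_\e(y)\,dy$ and bounds the kernel by $c\,d^{-3}$ for $x\in B_4$. It then controls $\int_{\partial B_2}|v_\e|\le c\,d^{2}\|v_\e\|$ by H\"older and the trace embedding $H^1(B_2)\hookrightarrow L^3(\partial B_2)$.
\item You isolate the harmonic extension $h$ and bound it by the mean value property on $B(x,d/8)$. The bound $\|h\|_{L^4(B_2)}\le c\|v_\e\|$ comes from the Sobolev inequality on $H^1_0(B_2)$ applied to $h-v_\e=-\wtilde v_\e-z$, together with energy bounds.
\end{itemize}

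The paper's route is shorter and does not depend on (ii). However, its trace step needs a trace constant on $\partial B(a,d/2)$ that does not blow up as $d\to0$. This holds only in the scale-invariant form, where the lower-order term is handled by $d^{-1}\|v_\e\|_{L^2(B_2)}\le c\|v_\e\|_{L^4(B_2)}$, and the paper leaves this implicit.

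Your route uses only interior estimates and the scale-invariant Sobolev inequality on $H^1_0(B_2)$, so the uniformity of every constant in $d$ is transparent. The cost is the extra splitting $w=h+z$ and the dependence on (ii).
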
 
\begin{pf} Using \eqref{Q-eps}, we deduce that 
\begin{align*} \|  \wtilde{v}_\e \|^2_{ H_0^1(B_2) } = \int_{B_2} | \nabla  \wtilde{v}_\e |^2  = \int_{B_2} (- \D  \wtilde{v}_\e )  \wtilde{v}_\e  = \int_{B_2} (- \D {v}_\e + V v _\e  )  \wtilde{v}_\e &  \leq  c \|  \wtilde{v}_\e \|_{ H_0^1(B_2) }  \| v_\e \|_{H^1(B_2)}  \end{align*}
which implies the assertion $(ii)$  of the lemma. 
Concerning assertion $(i)$, let $ G_{B_2} $ be the Green's function on $ B_2 $ with Dirichlet boundary condition. Since $ \D (v_\e -  \wtilde{v}_\e)= V v_\e $ in $ B_2 $,  it holds
\be \label{GB1}   (  v_\e -  \wtilde{v}_\e )(x) =  - \int _{B_2} G_{B_2}(x,y) V v_\e (y) dy -  \int_{ \partial B_2 } \frac{ \partial G_{B_2}}{ \partial \nu } (x,y) v_\e (y) dy . \ee
Now, let $ x \in B_4 $, it follows that $ | x-y | \geq c\,  d $ for each $ y \in \partial B_2 $. Hence, \eqref{GB1} implies that 
\be \label{GB2} \Big|  \int_{ \partial B_2 } \frac{ \partial G_{B_2}}{ \partial \nu } (x,y) v_\e (y) dy \Big|  \leq \frac{ c }{ d^{ 3 }}  \int_{ \partial B_2 } | v_\e (y) |  dy . \ee
But, using the continuity of the embedding  (with $ n = 4 $) 
$ H ^ 1( B_2 ) \to L ^{3 } ( \partial B_2 ), $
we deduce that 
$$  \int_{ \partial B_2 } | v_\e (y) |  dy \leq  \Big( \int_{ \partial B_2 } | v_\e (y) |^{3 }  dy \Big) ^{\frac{ 1 } {3} } \Big( \int_{ \partial B_2 } 1  dy \Big) ^{\frac{ 2 }{3} }  \leq c d ^{ 2} \| {v}_\e \|_{ H ^1(B_2) } . $$ 
Putting this information \eqref{GB2}, we get 
$$ \Big|  \int_{ \partial B_2 } \frac{ \partial G_{B_2}}{ \partial \nu } (x,y) v_\e (y) dy \Big| \leq c \frac{ d^{ 2} }{ d^{ 3 }}  \| {v}_\e \|_{ H^1(B_2) }  \leq  \frac{ c }{ d }  \| {v}_\e \|_{ H_0^1( \O ) }   $$ 
which gives the estimate of the second integral in \eqref{GB1}. Concerning the first one, using the Holder's inequality, we get 
\begin{align*} \Big| \int _B G_B(x,y) V v_\e (y) dy  \Big| & \leq c \int _B \frac{1}{ | x-y |^{2}} | v_\e | 
  \leq c \| v_\e \| _{L^{ 4 } (B)} \Big( \int_B \frac{ 1 }{ | x-y |^{ 8/3} } \Big) ^{ 3/4} \leq c \, d \, \| v_\e \| 
\end{align*}
which completes the proof of the lemma. 
\end{pf}

Now, we focus on estimating the odd part $ \wtilde{v}_{o, \e} $. We have 
\begin{lem}  Let $ n = 4   $. The function $ \wtilde{v}_{o, \e} $ defined in \eqref{odd-even} satisfies 
 \be \label{eq:est-odd}\|  \wtilde{v}_{o, \e} \|_{H_0^1(B_2) }  \leq c | 1 - \a_0^{2-\e} | + c  \| v _\e \|   \Big[ \frac{ 1 }{  \l d  } + \frac{ d }{ \l} \ln ( \l d )  + d^{4-\e} \Big]  +   \Big[ \e + \frac{d}{ \l} + \frac{1}{ (\l d )^2}      \Big]  + \| v_\e \|^2  . \ee
 \end{lem}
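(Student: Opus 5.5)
We plan to test the equation satisfied by $\wtilde{v}_{o,\e}$ against $\wtilde{v}_{o,\e}$ itself and exploit coercivity of the linearized operator on odd functions. Since $u_\e$ solves $(\mathcal{P}_{V,\e})$, on $B_2$ we have
$$-\D \wtilde{v}_\e = (-\D+V)v_\e = u_\e^{p-\e} - (-\D+V)(\a_0u_0+\a P\d) = u_\e^{p-\e}-\a_0 u_0^{p} - \a\,\d^{p} + \a V\,P\d .$$
For $n=4$ we have $p=3$. Expanding $u_\e^{3-\e}=(\a_0u_0+\a P\d+v_\e)^{3-\e}$, the right-hand side becomes
$$(3-\e)(\a P\d)^{2-\e}v_\e + f_\e,$$
where $f_\e$ gathers several pieces: $(\a P\d)^{3-\e}-\a\d^3$, the terms $\a_0^{3-\e}u_0^{3-\e}-\a_0u_0^3$, the cross terms $P\d^{2-\e}u_0$ and $P\d\,u_0^{2-\e}$, the terms $u_0^{2-\e}v_\e$, $\a VP\d$, and the quadratic terms $O(|v_\e|^2(P\d+u_0+|v_\e|))$.

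We then take the odd part in $(x-a)_k$. The ball $B_2$, the function $\d$ and the weight $\d^{2}$ are all even in $(x-a)_k$. So the odd part of $3\a^{2}\d^{2}v_\e$ equals $3\a^2\d^2 v_{o}$, where $v_o$ denotes the odd part of $v_\e$ restricted to $B_2$. We replace $v_o$ by $\wtilde v_{o,\e}$ using Lemma~\ref{est-1-2}(i) on $B_4$, and handle the annulus $B_2\setminus B_4$ using the smallness of $\d$ there. Multiplying by $\wtilde v_{o,\e}$ and integrating gives
$$\|\wtilde v_{o,\e}\|^2_{H^1_0(B_2)} - 3\a^2\int_{B_2}\d^2\wtilde v_{o,\e}^2 = \int_{B_2} (\mbox{odd part of } f_\e)\,\wtilde v_{o,\e} + \mbox{errors}.$$

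The key coercivity step is as follows. An odd function in $(x-a)_k$ is automatically orthogonal to $\d$, $\partial_\l\d$ and $\partial_{a_j}\d$ for $j\neq k$. Orthogonality to $\partial_{a_k}\d$ follows, up to a small error, from $v_\e\in E_{a,\l,u_0}$ together with Lemma~\ref{est-1-2}. Hence the standard nondegeneracy of $-\D-3\d^2$ (Bahri–Rey type) gives
$$\|\wtilde v_{o,\e}\|^2 - 3\int\d^2\wtilde v_{o,\e}^2\geq c\|\wtilde v_{o,\e}\|^2 - (\mbox{small})\,\|v_\e\|^2 .$$
The factors $\a^{2-\e}$, $\l^{-\e}$ and $P\d$ versus $\d$ contribute only the $\e$ and $\theta$-corrections.

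It remains to bound each odd-part source term in $H^{-1}(B_2)$.

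\emph{Even terms.} The terms $\a VP\d$ and $(\a P\d)^{3-\e}-\a\d^3$ are even up to the non-symmetric parts of $V$ and $\theta$. Expanding $V(x)=V(a)+\n V(a)(x-a)+\dots$, the odd part is of size $\int |x-a|\d\,|w|\le c\|w\|\,\l^{-2}$ for $n=4$ (with a logarithm). Taylor-expanding $\theta$ around $a$, its odd part is $O(|\n\theta|\,|x-a|)$, which yields terms like $\frac{1}{(\l d)^2}$.

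\emph{The $u_0$ terms.} For $\d^2u_0$ and $\d u_0^2$, only $u_0(x)-u_0(a)$ contributes to the odd part. Using $|u_0(x)-u_0(a)|\le c|x-a|$ gives the bound $\frac{d}{\l}$-type terms and the $\frac{d}{\l}\ln(\l d)$ coefficient on $\|v_\e\|$.

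\emph{The $u_0$-only terms.} The term $\a_0^{3-\e}u_0^{3-\e}-\a_0u_0^3$ gives $|1-\a_0^{2-\e}|+\e$. For $u_0^{2-\e}v_\e$, we use $u_0\le cd$ from \eqref{u_0-bord} on $B_2$ to get $d^{4-\e}$-type factors times $\|v_\e\|$.

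\emph{Quadratic terms.} These give $\|v_\e\|^2$.

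\emph{Annulus and boundary errors.} The errors from Lemma~\ref{est-1-2}(i) and from the annulus, where $\d\le c\l^{-1}d^{-2}$, give $\frac{\|v_\e\|}{\l d}$.

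The main obstacle is the coercivity step. The linearized operator is posed on $B_2$ rather than $\R^4$, and orthogonality to $\partial_{a_k}\d$ only holds for $v_\e$ in $\O$, not exactly for $\wtilde v_{o,\e}$. We would first establish
$$\Big(\wtilde v_{o,\e},\tfrac{1}{\l}\partial_{a_k}\d\Big)_{H^1_0(B_2)}=O\Big(\|v_\e\|\big(\tfrac{1}{\l d}+\tfrac{1}{\l^{2}}\big)\Big)$$
by comparing with $(v_\e,\frac1\l\partial_{a_k}P\d)=0$ via \eqref{Q-eps}. We would then subtract this component and apply the spectral gap, with the eigenvalue gap uniform because $\l d\to\infty$.
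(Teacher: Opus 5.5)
Your proposal is correct and follows essentially the paper's route. You test the equation on $B_2$ against $\widetilde v_{o,\varepsilon}$, use oddness to kill $\int_{B_2}\delta^3\widetilde v_{o,\varepsilon}$ and the even parts of the sources, pass from $v_\varepsilon$ to $\widetilde v_\varepsilon$ on $B_4$ via Lemma~\ref{est-1-2}(i), and close with coercivity of $\|w\|^2-3\int\delta^2 w^2$ on odd functions. Your explicit near-orthogonality of $\widetilde v_{o,\varepsilon}$ to $\lambda^{-1}\partial_{a_k}\delta$, obtained from $(v_\varepsilon,\lambda^{-1}\partial_{a_k}P\delta)=0$, together with the observation that removing a kernel direction of $-\Delta-3\delta^2$ leaves the quadratic form unchanged, is exactly what the paper delegates to Lemma 2.8 of [BF].

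There are two harmless slips. First, $\alpha VP\delta$ enters $(-\Delta+V)v_\varepsilon$ with a minus sign. Second, the odd $V$-contribution is of order $\frac{d^2}{\lambda}\|w\|$, not $\lambda^{-2}$; this is still covered by the $\frac{d}{\lambda}$ term of the statement.
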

\begin{pf} Multiplying the first equation of $ (P_\e) $ by $ \wtilde{v}_{o, \e} $ and integrating over $ B_2 $, we derive that 
\be \label{fin-41} \int_{B_2} (- \D ) [ \a_0 u_0 + \a P \d_{a, \l} + v_\e ]  \wtilde{v}_{o, \e} + \int_{B_2} V  [ \a_0 u_0 + \a P \d_{a, \l} + v_\e ] \wtilde{v}_{o, \e}   = \int_{B_2} u_\e  ^{3-\e } \wtilde{v}_{o, \e} . \ee
Observe that 
\be \label{fin-42} \int_{B_2} ( - \D v_\e + V v_\e ) \wtilde{v}_{o, \e} = \int_{B_2} ( - \D \wtilde{v}_{o, \e} ) \wtilde{v}_{o, \e} = \| \wtilde{v}_{o, \e} \| ^2 _{ H_0^1(B_2)} . \ee 
In addition, by oddness, we have 
\begin{align} 
& \int_{B_2} ( - \D P \d_{a, \l} ) \wtilde{v}_{o, \e} = \int_{B_2} \d_{a, \l} ^3 \wtilde{v}_{o, \e} = 0 , \label{fin-43} \\
 &  \int_{B_2} V P \d_{a, \l} \wtilde{v}_{o, \e} =   \int_{B_2} V  \d_{a, \l} \wtilde{v}_{o, \e} -  \int_{B_2} V  \theta_{a, \l} \wtilde{v}_{o, \e} \notag   \\
  & \qquad \qquad \qquad = V(a) \times 0 + O \Big( \int_{B_2} | x-a | \d_{a, \l} | \wtilde{v}_{o, \e} |  + \frac{ 1 }{  \l d ^2 } \int_{B_2} | \wtilde{v}_{o, \e} |\Big) \notag  \\
   & \qquad \qquad \qquad =  O \Big( \frac{1}{ \l } \int_{B_2} \frac{1}{  | x-a | } |  \wtilde{v}_{o, \e} | + 
     \|  \wtilde{v}_{o, \e} \|_{H_0^1(B_2) } \Big[  \frac{ 1}{ \l } \frac{ d^{ 3}}{ d^{ 2}}   \Big] \Big) = O \Big( \frac{ d }{ \l }  \|  \wtilde{v}_{o, \e} \|_{H_0^1(B_2) } \Big) . 
 \label{fin-44} \end{align}
Concerning the right hand side, as before, let $ W := \a_0 u_0 + \a P \d_{a, \l}  $, we have 
\be \label{fin-21} \int_{B_2} u_\e  ^{3-\e } \wtilde{v}_{o, \e}  = \int_{B_2} W^{3-\e } \wtilde{v}_{o, \e}  + (3-\e )\int_{B_2} W ^{2-\e } v_\e \wtilde{v}_{o, \e}  + O \Big( \| v_\e \|^2 \| \wtilde{v}_{o, \e}  \| \Big) . \ee
Observe that 
\begin{align*} &  \int_{B_2}  W ^{2-\e } v_\e \wtilde{v}_{o, \e} \\
 & = \a ^{2-\e} \int_{B_2} \d_{a, \l}^{2-\e} v_\e \wtilde{v}_{o, \e}  + O \Big( \int_{B_2} \d_{a, \l} \theta_{a, \l} | v_\e | | \wtilde{v}_{o, \e} | + \int_{B_2} \d_{a, \l} u_0 | v_\e | | \wtilde{v}_{o, \e} | + \int_{B_2} u_0^{2-\e} | v_\e | | \wtilde{v}_{o, \e} | \Big) \\
 & = \a ^{2-\e} \int_{B_2} \d_{a, \l}^{2-\e} v_\e \wtilde{v}_{o, \e}  + O \Big( \| v_\e \| \| \wtilde{v}_{o, \e} \| \Big[ \frac{ \ln ( \l d )}{ (\l d )^2} + \frac{ d }{ \l} \ln ( \l d )  + d^{4-\e} \Big] \Big). 
  \end{align*}
But, using Lemma \ref{est-1-2}, by oddness, we get 
\begin{align*} \int_{B_2} \d_{a, \l}^{2-\e} v_\e \wtilde{v}_{o, \e} & = \int_{B_4} \d_{a, \l}^{2-\e}  \wtilde{v}_\e \wtilde{v}_{o, \e} + \int_{B_4} \d_{a, \l}^{2-\e} (v_\e -  \wtilde{v}_\e) \wtilde{v}_{o, \e} + \int_{B_2 \setminus B_4} \d_{a, \l}^{2-\e} v_\e \wtilde{v}_{o, \e}   \\
& = \int_{B_4} \d_{a, \l}^{2}  \wtilde{v}_{o, \e}^2 + o ( \| \wtilde{v}_{o, \e} \| ^2 ) + O \Big( \frac{ \| v_\e \| }{ d } \int_{B_4} \d_{a, \l}^{2} | \wtilde{v}_{o, \e} | + \| v _\e \| \| \wtilde{v}_{o, \e} \| \frac{ 1 }{ ( \l d )^2 } \Big) . 
\end{align*}
Hence we obtain 
\be \label{fin-11} \int_{B_2}  W ^{2-\e } v_\e \wtilde{v}_{o, \e} = \int_{B_2} \d_{a, \l}^{2}  \wtilde{v}_{o, \e}^2 + o ( \| \wtilde{v}_{o, \e} \| ^2 ) + O \Big(  \| v _\e \| \| \wtilde{v}_{o, \e} \|  \Big[ \frac{ 1 }{  \l d  } + \frac{ d }{ \l} \ln ( \l d )  + d^{4-\e} \Big]\Big)   \ee
which gives the estimate of the second integral of \eqref{fin-21}. Concerning the first one, observe that
$$ \int_{B_2} W^{3-\e } \wtilde{v}_{o, \e} = \a ^{3-\e} \int_{B_2} P\d_{a, \l}^{3-\e} \wtilde{v}_{o, \e} + \a_0 ^{3-\e} \int_{B_2} u_0^{3-\e} \wtilde{v}_{o, \e} + O \Big(  \int_{B_2} \d_{a, \l}^{2} u_0 | \wtilde{v}_{o, \e} |  +  \int_{B_2} \d_{a, \l} u_0^{2-\e} | \wtilde{v}_{o, \e} | \Big) . $$
In addition, it holds 
\begin{align*}
& \int_{B_2} \d_{a, \l} u_0^{2-\e} | \wtilde{v}_{o, \e} | \leq c \frac{ d^{2-\e} }{ \l}  \int_{B_2} \frac{1}{ | x-a |^2 }   | \wtilde{v}_{o, \e} | \leq c  \frac{d^{2-\e}}{ \l } \|  \wtilde{v}_{o, \e} \| \Big( \int_{B_2} \frac{1}{ | x-a |^{8/3} } \Big) ^{3/4} \leq c  \frac{d^{3-\e}}{ \l } \|  \wtilde{v}_{o, \e} \| ,  \\
& \int_{B_2} \d_{a, \l}^{2} u_0 | \wtilde{v}_{o, \e} | \leq c d \int_{B_2} \d_{a, \l}^{2}  | \wtilde{v}_{o, \e} | \leq c d  \|  \wtilde{v}_{o, \e} \| \Big( \int_{B_2} \d_{a, \l }^{8/3} \Big) ^{3/4} \leq c \frac{ d }{ \l}  \|  \wtilde{v}_{o, \e} \| . 
\end{align*}
For the other integrals, we have 
\begin{align*}
& \int_{B_2} u_0^{3-\e} \wtilde{v}_{o, \e} =  \int_{B_2} u_0^{3} \wtilde{v}_{o, \e}  + O \Big( \e \int_{B_2}|  \wtilde{v}_{o, \e} | \Big) = \int_{B_2} \n u_0 \cdot \n  \wtilde{v}_{o, \e} + \int_{B_2} V u_0 \wtilde{v}_{o, \e} + O ( \e \| \wtilde{v}_{o, \e} \| ) ,   \\
 & \int_{B_2} P\d_{a, \l}^{3-\e} \wtilde{v}_{o, \e} = \int_{B_2} \d_{a, \l}^{3-\e} \wtilde{v}_{o, \e}  + O \Big( \int_{B_2} \d_{a, \l}^{2} \theta_{a, \l} | \wtilde{v}_{o, \e} | \Big) = O \Big( \frac{ \|  \wtilde{v}_{o, \e} \| }{ ( \l d )^2} \Big) .
\end{align*}
Thus we obtain 
\be \label{fin-22} \int_{B_2} W^{3-\e } \wtilde{v}_{o, \e} = \a_0 ^{3-\e} \Big( \int_{B_2} \n u_0 \cdot \n  \wtilde{v}_{o, \e} + \int_{B_2} V u_0 \wtilde{v}_{o, \e} \Big) +  O \Big( \| \wtilde{v}_{o, \e} \| \Big[ \e + \frac{d}{ \l} + \frac{1}{ (\l d )^2}      \Big] \Big) . 
\ee
Combining \eqref{fin-21}, \eqref{fin-11} and \eqref{fin-22}, we get 
\begin{align} \label{fin-23} 
\int_{B_2} & u_\e  ^{3-\e }  \wtilde{v}_{o, \e}  =  3  \int_{B_2} \d_{a, \l}^{2}  \wtilde{v}_{o, \e}^2  + o ( \| \wtilde{v}_{o, \e} \| ^2 ) +  \a_0 ^{3-\e} \Big( \int_{B_2} \n u_0 \cdot \n  \wtilde{v}_{o, \e} + \int_{B_2} V u_0 \wtilde{v}_{o, \e} \Big) \notag  \\
 &  + O \Big(  \| v _\e \| \| \wtilde{v}_{o, \e} \|  \Big[ \frac{ 1 }{  \l d  } + \frac{ d }{ \l} \ln ( \l d )  + d^{4-\e} \Big]  +  \| \wtilde{v}_{o, \e} \| \Big[ \e + \frac{d}{ \l} + \frac{1}{ (\l d )^2}      \Big]  + \| v_\e \|^2 \| \wtilde{v}_{o, \e} \| \Big) . 
 \end{align}
Combining \eqref{fin-41}--\eqref{fin-44} and \eqref{fin-23}, we obtain  
\begin{align} & \| \wtilde{v}_{o, \e} \| ^2 _{ H_0^1(B_2)} - 3 \int_{B_2} \d_{a, \l}^{2}  \wtilde{v}_{o, \e}^2 + o ( \| \wtilde{v}_{o, \e} \| ^2 ) + \a_0 ( 1 -  \a_0 ^{2-\e} ) \Big( \int_{B_2} \n u_0 \cdot \n  \wtilde{v}_{o, \e} + \int_{B_2} V u_0 \wtilde{v}_{o, \e} \Big) \notag  \\
 & = O \Big(   \| v _\e \| \| \wtilde{v}_{o, \e} \|  \Big[ \frac{ 1 }{  \l d  } + \frac{ d }{ \l} \ln ( \l d )  + d^{4-\e} \Big]  +  \| \wtilde{v}_{o, \e} \| \Big[ \e + \frac{d}{ \l} + \frac{1}{ (\l d )^2}      \Big]  + \| v_\e \|^2 \| \wtilde{v}_{o, \e} \| \Big) . \label{fin-51}
\end{align}
Notice that $\wtilde{v}_{o, \e}$ is not orthogonal to $\partial P\d /\partial a_k$. Following the proof of Lemma $2.8$ in \cite{BF} (see $6.1$ and $6.8$ in \cite{BF}), we deduce that 
\be \label{fin-52} \| \wtilde{v}_{o, \e} \| ^2 _{ H_0^1(B_2)} - 3 \int_{B_2} \d_{a, \l}^{2}  \wtilde{v}_{o, \e}^2 \geq c \| \wtilde{v}_{o, \e} \| ^2 _{ H_0^1(B_2)} . \ee
Finally, \eqref{fin-51} and \eqref{fin-52} completes the proof of \eqref{eq:est-odd}. 
\end{pf}

Now, we are able to state the analogous of Proposition \ref{pas-geq5} for  dimension $ n = 4 $.  
\begin{pro} \label{pas-n-4} Let $ n = 4 $ and $ k \in \{ 1, \cdots, n \}$. Then the following statement holds 
\be \label{a-fin-11} - 3 \frac{\ov{ c }_5}{ \l ^2 }  \frac{ \partial u_0 (a) }{\partial a_k}   + \frac{1}{2} \a  \frac{c_0\bar{c}_2}{\l^{3}}\frac{\partial H}{\partial a_k}(a,a) = o\Big(\frac{1}{(\l d)^{3}} + \frac{1}{\l^{2}}\Big) + O \Big( \e ^2 + \frac{ d }{ \l^2 } + \frac{ \e }{ \l  }  \Big) .   \ee 
\end{pro}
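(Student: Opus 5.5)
We follow the proof of Proposition \ref{pas-geq5}, now with $n=4$ (so $p=3$) and $\psi=\frac{1}{\l}\frac{\partial P\d}{\partial a_k}$ in \eqref{lion1}, whose left-hand side vanishes since $u_\e$ is a solution. The difference from $n\geq5$ lies in the terms linear in $v_\e$. There, Cauchy--Schwarz gives $O(\|v_\e\|^2)$, and by \eqref{est-v-eps-01} this contains $1/\l^2$, which has the same order as the principal term $\frac{\nabla u_0(a)}{\l^2}$. We therefore keep the terms independent of $v_\e$ exactly as in Proposition \ref{pas-geq5}, specialised to $n=4$.

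\textbf{The terms independent of $v_\e$.}
\begin{itemize}
\item The $V$-term \eqref{k54bisas} gives $\rho_4\nabla V(a)/\l^3=O(\e/\l)$, since $1/\l^2\le c\,\e$ by \eqref{a-fin-5}; otherwise it is $o(1/\l^2)$.
\item The gradient terms and \eqref{as18} combine into $\frac{\a}{2}\frac{c_0\bar c_2}{\l^3}\frac{\partial H}{\partial a_k}(a,a)$.
\item The $u_0$-interaction \eqref{as12}--\eqref{as12b} gives $-3\bar c_5\frac{\partial_k u_0(a)}{\l^2}$, with $\a_0,\a,\l^{-\e}$ replaced by $1$ up to errors absorbed by \eqref{a-fin-4}--\eqref{a-fin-5}.
\item The term $\a_0(1-\a_0^{p-1-\e})(u_0,\psi)$ is $O(\e/\l)$ after using \eqref{a-fin-4}.
\item Remainders such as \eqref{as13} give $d^{(p+1)/2}/\l^2=O(d/\l^2)$ and $o(\frac{1}{(\l d)^3}+\frac{1}{\l^2})$.
\end{itemize}

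\textbf{The $v_\e$-terms.} The main ones are $p\int W^{p-1-\e}v_\e\psi$ and the quadratic remainder $\int|u|^{p-1-\e}u\psi - \int W^{p-\e}\psi - p\int W^{p-1-\e}v_\e\psi$.

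For the linear term, \eqref{as3}--\eqref{as8} together with \eqref{as6} (orthogonality) already give $O(\|v_\e\|(\e+d+\frac{1}{\l d}+\frac{1}{\l}))=o(\frac{1}{\l^2}+\frac{1}{(\l d)^3})+O(\e^2)$. This uses $\|v_\e\|^2\le c(\e^2+\l^{-2}+(\l d)^{-4})$, and products are split by Young's inequality.

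The dangerous piece is the quadratic remainder, essentially $3\int_\O \d\, v_\e^2\,\frac{1}{\l}\partial_{a_k}\d$. The contributions from $\O\setminus B_4$ and from $P\d-\d$ carry factors $\frac{1}{\l d}$ and are negligible. On $B_4$ we proceed in three steps.
\begin{enumerate}
\item Replace $v_\e$ by $\wtilde v_\e$ using Lemma \ref{est-1-2}(i). The error is bounded by $\frac{\|v_\e\|}{d}\int_{B_4}\d|\nabla_a\d|\l^{-1}|v_\e|$, which is small after the Hölder estimates.
\item Split $\wtilde v_\e=\wtilde v_{e,\e}+\wtilde v_{o,\e}$ as in \eqref{odd-even}. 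The weight $\d\,\partial_{a_k}\d$ is odd in $(x-a)_k$, so $\int \d\,\partial_{a_k}\d\,\wtilde v_{e,\e}^2=\int\d\,\partial_{a_k}\d\,\wtilde v_{o,\e}^2=0$. Only the cross term $2\int\d\,\l^{-1}\partial_{a_k}\d\,\wtilde v_{e,\e}\wtilde v_{o,\e}$ survives.
\item Bound the cross term by $c\|\wtilde v_{e,\e}\|\,\|\wtilde v_{o,\e}\|\le c\|v_\e\|\,\|\wtilde v_{o,\e}\|$, using Lemma \ref{est-1-2}(ii) and $\|\l^{-1}\d\,\partial_a\d\|_{L^{2}}$-type Sobolev bounds.
\end{enumerate}

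Now insert \eqref{eq:est-odd} together with \eqref{a-fin-4}. The result is $\|v_\e\|\big(\e+\frac{1}{\l}+\frac{d}{\l}+\frac{1}{(\l d)^2}+\|v_\e\|^2+\|v_\e\|(\frac{1}{\l d}+d)\big)$. Every piece is either $O(\e^2+\frac{\e}{\l}+\frac{d}{\l^2})$ or $o(\frac{1}{\l^2}+\frac{1}{(\l d)^3})$. The term $\frac{\|v_\e\|}{\l}$ is the borderline one. We handle it with $\|v_\e\|\le c(\e+\frac{1}{\l}+\frac{1}{(\l d)^2})$: this gives $\frac{\e}{\l}$, $\frac{1}{\l^2}\cdot\frac{1}{\l}$ and $\frac{1}{\l(\l d)^2}$, and the latter is $o(\frac{1}{\l^2}+\frac{1}{(\l d)^3})$ by Young's inequality.

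\textbf{Main obstacle.} I expect the bookkeeping of the quadratic term to be the hard step. One must check that the parity cancellation really leaves only the cross term, which needs the symmetry of $B_4$ around $a$ and the exact oddness of the weight. One must also check that no $\|v_\e\|^2$ without a small factor survives. The rest is a specialisation of Proposition \ref{pas-geq5} to $n=4$.
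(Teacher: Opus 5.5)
Your plan follows the paper's proof. You specialise Proposition \ref{pas-geq5} to $n=4$ and expand the nonlinear term to second order in $v_\e$. On $B_4$ you replace $v_\e$ by $\wtilde v_\e$ via Lemma \ref{est-1-2}, use parity to reduce the quadratic term to the cross term, and bound that term by $\|\wtilde v_{e,\e}\|\,\|\wtilde v_{o,\e}\|\le c\|v_\e\|\,\|\wtilde v_{o,\e}\|$ using \eqref{eq:est-odd}.

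\textbf{The gap} is in the step you flag as borderline. The first term of \eqref{eq:est-odd} is $c|1-\a_0^{2-\e}|$. For $n=4$ the only available bound, \eqref{a-fin-4}, gives $c(\e+\|v_\e\|^2+\frac{1}{\l})$, so the cross term contains $\|v_\e\|/\l$. Inserting $\|v_\e\|\le c(\e+\frac{1}{\l}+\frac{1}{(\l d)^2})$ yields $\frac{1}{\l}\cdot\frac{1}{\l}=\frac{1}{\l^2}$, not $\frac{1}{\l^2}\cdot\frac{1}{\l}$ as you wrote. Nothing available excludes $\|v_\e\|\sim 1/\l$ in dimension four; this is exactly why the odd/even splitting is needed at all. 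You are therefore left with an $O(1/\l^2)$ error, of the same size as the principal term $3\bar{c}_5\,\partial_k u_0(a)/\l^2$, and \eqref{a-fin-11} does not follow. The paper's proof ends with the same one-line combination of \eqref{eq:est-odd} and \eqref{est-v-eps-01} and does not treat this term explicitly, so you cannot simply defer to it.

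\textbf{A repair.} In \eqref{fin-51} the factor multiplying $\a_0(1-\a_0^{2-\e})$ is
\[
\int_{B_2}\big(\n u_0\cdot\n \wtilde v_{o,\e}+Vu_0\wtilde v_{o,\e}\big)=\int_{B_2}u_0^3\,\wtilde v_{o,\e},
\]
because $-\D u_0+Vu_0=u_0^3$ and $\wtilde v_{o,\e}\in H^1_0(B_2)$. The function $\wtilde v_{o,\e}$ is odd under $(x-a)_k\mapsto-(x-a)_k$ on the symmetric ball $B_2$, so only the odd part of $u_0^3$ contributes. By \eqref{u_0-bord} and $u_0\in C^1(\ov{\O})$ one has $|\n(u_0^3)|\le c\,d^2$ on $B_2$. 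With H\"older ($\||x-a|\|_{L^{4/3}(B_2)}\le c\,d^4$) and Sobolev, this integral is $O\big(d^2\int_{B_2}|x-a|\,|\wtilde v_{o,\e}|\big)=O(d^6\|\wtilde v_{o,\e}\|)$.

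Hence $c|1-\a_0^{2-\e}|$ in \eqref{eq:est-odd} improves to $c\,d^6|1-\a_0^{2-\e}|$. Its product with $\|v_\e\|$ is then $O(\e^2+\frac{\e}{\l}+\frac{d}{\l^2})+o(\frac{1}{\l^2}+\frac{1}{(\l d)^3})$. Alternatively, you can sharpen \eqref{a-fin-4} when $a$ approaches $\partial\O$. There $(P\d,u_0)=\int_\O w\,\d^3$, with $-\D w=u_0^3$ and $w=0$ on $\partial\O$; this term and $\int_\O P\d^3u_0$ are both $O(\frac{d}{\l}+\frac{1}{\l^2})$.

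\textbf{Two lesser slips.}
\begin{itemize}
\item Your linear-term bound $O(\|v_\e\|(\e+d+\frac{1}{\l d}+\frac{1}{\l}))$ is not negligible as written, because you dropped the factor $R_{33}=\frac{1}{\l}$ from \eqref{as3}. Done correctly, one gets $O(\|v_\e\|[\e+\frac{d}{\l}+\frac{1}{(\l d)^2}])$ up to harmless terms, as in the paper.
\item \eqref{a-fin-5} does not give $\frac{1}{\l^2}\le c\,\e$, since $\g_4=0$. This is harmless: for $n=4$ the integral defining $\rho_n$ diverges logarithmically, and the $V$-term is $O(\ln(\l d)/\l^3)=o(1/\l^2)$.
\end{itemize}
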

\begin{pf} First, using Lemma \ref{L41}, we know that $ \lim \e \ln \l = 0 $. Furthermore, Propositions \ref{p2}, \ref{rp41},  \ref{rp42} and \ref{rp43}  are valid.
Now, we follow the proof of Proposition \ref{pas-geq5}, but with the solution $ u_\e $. Let $ k \in \{1, \cdots, n \}$, multiplying the first equation of $ ( \mathcal{P}_{V,\e} ) $ by $ \l^{-1} \partial P \d_{a, \l} / \partial a_k $, we obtain 
\be \label{k52as-bis}
 \a_0 (u_0,\frac{1}{\l} \frac{\partial P\d_{a, \l}}{\partial a_k}) + \a \int_{\O} V \, P\d_{a, \l} \, \frac{1}{\l} \frac{\partial P\d_{a, \l}}{\partial a_k} 
+ \a \int_{\O} \n P\d_{a, \l} \cdot  \n (\frac{1}{\l} \frac{\partial P\d_{a, \l}}{\partial a_k}) =  \int_{\O} u_\e ^{p - \e}  \frac{1}{\l} \frac{\partial P\d_{a, \l}}{\partial a_k}   . 
\ee
Notice that the second integral is computed in \eqref{k54bisas} for $ n \geq 5 $ and, in the same way, for $ n = 4 $, we get 
$$ \int_{\O} V \, P\d_{a, \l} \, \frac{1}{\l} \frac{\partial P\d _{a, \l}}{\partial a_k } = \n V(a) \, \rho_n \frac{\ln(\l d)}{\l^{3}} + \, o\Big(\frac{1}{(\l d)^{3}} + \frac{1}{\l^{2}}\Big) = o\Big(\frac{1}{(\l d)^{3}} + \frac{1}{\l^{2}}\Big). $$
For the third integral, using Lemma A.10 of \cite{BCH}, we have 
$$ \int_{\O} \n P\d_{a, \l} \cdot \n (\frac{1}{\l} \frac{\partial P\d_{a, \l}}{\partial a_k}) = - \frac{1}{2} \frac{ c_0 \ov{c}_2 }{ \l^{ 3} } \frac{ \partial H }{ \partial a_k } (a, a ) + O \Big( \frac{1}{ ( \l d )^4 } \Big).  $$
Now, we will focus on the right hand side of \eqref{k52as-bis}. We need to refine \eqref{ms1as}. Using the same notation, we have 
\begin{align} \label{cl3} 
\int_{\O}   u_\e ^{3 - \e}  \frac{1}{\l} \frac{\partial P\d_{a, \l} }{\partial a_k} = & \int_{\O} W^{3-\e} \, \frac{1}{\l} \frac{\partial P\d _{a, \l} }{\partial a_k} + (3-\e)\int_{\O}W^{ 2-\e} \, {v_\e } \, \frac{1}{\l} \frac{\partial P\d_{a, \l}}{\partial a_k} \notag\\
&+\,\frac{(3-\e)(2-\e)}{2} \int_\O W^{1-\e} \, {v_\e ^2} \, \frac{1}{\l} \frac{\partial P\d_{a, \l}}{\partial a_k} +\,O(\| v_\e \|^3) .
\end{align}
Concerning the first integral in \eqref{cl3}, combining  \eqref{as10}, \eqref{as12}, \eqref{as12b}, \eqref{as13} and  \eqref{as18}, we obtain 
\begin{align*}
 \int_{\O} & W^{3-\e} \,  \frac{1}{\l} \frac{\partial P\d _{a, \l} }{\partial a_k}  
  =  \a_0 ^{3-\e} ( u_0 , \frac{1}{\l} \frac{\partial P\d _{a, \l} }{\partial a_k}) + (3 - \e ) \a^{2-\e} c_0^{-\e} \l^{-\e} \frac{ \ov{ c }_5}{ \l } \frac{ \partial u_0 (a) }{\partial a_k}   \\
  &- \a^{3-\e} c_0^{-\e} \l^{-\e}  \frac{c_0\bar{c}_2}{\l^{3}}\frac{\partial H}{\partial a_k}(a,a) 
  +O\Big(\frac{\e}{(\l d)^{2}}+\frac{1}{(\l d)^{4}} + \frac{\e d }{ \l} + \frac{1 }{ \l^2 ( \l d ) }    + \frac{ d^{4-\e} }{ \l^2} + \frac{ \ln \l }{ \l^2 (\l d ) } \Big). 
 \end{align*}
In addition, combining \eqref{ms2as}--\eqref{as7}, we deduce that 
$$ \int_{\O}W^{ 2-\e} \, {v_\e } \, \frac{1}{\l} \frac{\partial P\d_{a, \l}}{\partial a_k}  = O \Big( { \| v_\e \| } \Big[  \e + \frac{ d }{ \l } +  \frac{1}{ (\l d ) ^2 }  \Big] \Big) . $$
It remains to estimate the last integral in \eqref{cl3}. Observe that 
 \be \label{a-fin-1}  \int_\O W^{1-\e} \, {v_\e ^2} \, \frac{1}{\l} \frac{\partial P\d_{a, \l}}{\partial a_k} =  \int_\O ( \a P \d_{a, \l} )^{1-\e} \, {v_\e ^2} \, \frac{1}{\l} \frac{\partial P\d_{a, \l}}{\partial a_k} + O \Big(  \int_\O ( \a_0 u_0)^{1-\e} \, {v_\e ^2} \, \frac{1}{\l} \Big| \frac{\partial P\d_{a, \l}}{\partial a_k} \Big| \Big) . \ee 
In addition,  it holds 
$$ \int_\O ( \a_0 u_0)^{1-\e} \, {v_\e ^2} \, \frac{1}{\l} \Big| \frac{\partial P\d_{a, \l}}{\partial a_k} \Big| \leq c \int_{ \O }  {v_\e ^2}  \d_{a, \l} \leq c \| v_\e \| ^2 \Big( \int_\O \d_{a, \l}^2 \Big)^{1/2} \leq c \frac{ \ln (\l)^{1/2} }{ \l } \| v_\e \| ^2  .$$
Concerning the first integral in \eqref{a-fin-1}, let $ B_4 := B(a, d/4)$. Then we write 
\begin{align} \int_\O ( P \d_{a, \l} )^{1-\e} \, {v_\e ^2} \, \frac{1}{\l} \frac{\partial P\d_{a, \l}}{\partial a_k} = & \int_{B_4}  \d_{a, \l} ^{1-\e} \, {v_\e ^2} \, \frac{1}{\l} \frac{\partial \d_{a, \l}}{\partial a_k} \notag  \\
 & + O \Big( \int_{B_4}  \d_{a, \l}  \theta_{a, \l} \, {v_\e ^2}   + \int_{B_4}  \d_{a, \l}  {v_\e ^2} \, \frac{1}{\l} \Big| \frac{\partial \theta_{a, \l}}{\partial a_k} \Big| + \int_{ \O \setminus B_4} \d_{a, \l} ^2 {v_\e ^2} \Big) . \label{a-fin-2} \end{align}
The remainder terms can be estimated as 
\begin{align*} \int_{B_4}  \d_{a, \l}  \theta_{a, \l} \, {v_\e ^2}   +   \int_{B_4}  \d_{a, \l}  {v_\e ^2} \, \frac{1}{\l} \Big| \frac{\partial \theta_{a, \l}}{\partial a_k} \Big|+ \int_{ \O \setminus B_4} \d_{a, \l} ^2 {v_\e ^2} &  \leq  \int_{ \O \setminus B_4 }  \d_{a, \l}^2   {v_\e ^2} + \frac{ c }{ \l d ^2 }  \int_{B_4}  \d_{a, \l}  {v_\e ^2} \\
& \leq c \frac{ \ln ( \l d )^{1/2} }{ ( \l d )^2 }  \| v_\e \|^2 . 
 \end{align*}
For the first integral, using \eqref{eq:est-1-2}, by oddness, we have 
\begin{align*}
 \int_{B_4}  \d_{a, \l} ^{1-\e} \, {v_\e ^2} \, \frac{1}{\l} \frac{\partial \d_{a, \l}}{\partial a_k} &  = \int_{B_4}  \d_{a, \l} ^{1-\e} \, ( \wtilde{v}_{0,\e} +\wtilde{v}_{e,\e} )  ^2 \, \frac{1}{\l} \frac{\partial \d_{a, \l}}{\partial a_k} +  \int_{B_4}  \d_{a, \l} ^{1-\e} \, ({v_\e }-\wtilde{v}_\e )({v_\e } + \wtilde{v}_\e ) \, \frac{1}{\l} \frac{\partial \d_{a, \l}}{\partial a_k} \\
 & = 2 \int_{B_4}  \d_{a, \l} ^{1-\e} \,  \wtilde{v}_{0,\e} \wtilde{v}_{e,\e}  \, \frac{1}{\l} \frac{\partial \d_{a, \l}}{\partial a_k} + O \Big( \frac{ \| v_\e \| }{ d } ( \| v_\e \| + \|  \wtilde{v}_\e \| ) \Big( \int_{B_4} \d_{a, \l} ^{8/3}  \Big)^{3/4} \\
  & =  O\Big( \|  \wtilde{v}_{0,\e} \| \| \wtilde{v}_{e,\e} \| + \frac{1}{ \l d }  \| v_\e \| ^2 \Big) . 
 \end{align*} 
Hence \eqref{a-fin-2} becomes 
$$  \int_\O ( P \d_{a, \l} )^{1-\e} \, {v_\e ^2} \, \frac{1}{\l} \frac{\partial P\d_{a, \l}}{\partial a_k} =  O\Big( \|  \wtilde{v}_{0,\e} \| \| \wtilde{v}_{e,\e} \| + \frac{1}{ \l d }  \| v_\e \| ^2 \Big) . $$
Combining the previous estimates with  \eqref{eq:est-odd} and \eqref{est-v-eps-01}, the proof of Lemma \ref{pas-n-4} follows 
\end{pf}

Now we are able to prove Theorem \ref{t1} for $ n = 4 $. 

\begin{pfn}{ \bf of Theorem \ref{t1} in the case $ n = 4 $. }
Using \eqref{a-fin-4}, \eqref{a-fin-5} and the fact that $ u_\e $ is a critical point of $ I_\e $, Proposition \ref{rp43} implies that 
 \be  \label{a-fin-12}- \a c_0\bar{c}_2 \frac{H(a ,a )}{\l ^ 2 }   + \bar{c}_3 \, \e \, \a  + \,  \a_0 \bar{c}_2  \frac{ u_0(a) }{\l } = o \Big( \e + \frac{d}{ \l } + \frac{ 1 }{ (\l d )^2 } \Big) .    \ee 
Notice that, we have $ u_0(a) \geq c d $ and $ H(a,a) \geq c / d^2 $.  Hence we deduce that $ \e \leq { c } / { ( \l d )^2} $. 
 According to Theorem \ref{t0}, it suffices to prove that  $ a $ is far away from the boundary. 
Arguing by contradiction, assume that  $ d \to 0 $. In this case, since $ u_0 $ is a solution of $ ( \mathcal{P}_{V,0})$, it follows that $ \partial u_0 / \partial \nu < 0 $ on the boundary. Let $ \ov{a} = \lim a \in \partial \O $ and $ \nu $ be the outward normal vector to $ \partial \O $ at $ \ov{a} $. Using \eqref{a-fin-11}, we get 
$$ - 3 \frac{\ov{ c }_5}{ \l ^2 }  \frac{ \partial u_0 (a) }{\partial \nu}   + \frac{1}{2} \a  \frac{c_0\bar{c}_2}{\l^{3}}\frac{\partial H}{\partial \nu}(a,a) = o\Big(\frac{1}{(\l d)^{3}} + \frac{1}{\l^{2}}\Big) + O \Big( \e ^2 + \frac{ d }{ \l^2 } + \frac{ \e }{ \l  }  \Big)    $$
and therefore, using \eqref{H-bord} and the fact that $ \e \leq c / ( \l d )^2 $, we derive that 
$$ \frac{ c }{ \l^2} + \frac{ c }{ ( \l d )^3 } =  o\Big(\frac{1}{(\l d)^{3}} + \frac{1}{\l^{2}}\Big)    $$
 which gives a contradiction. Hence the proof of Theorem \ref{t1} is completed for $ n = 4 $. 
 \end{pfn}

\section{The Influence of the Sign of the Normal Derivative of $V$ on the Boundary}

This  section will discuss how the sign of the normal derivative of the potential $V$ on the boundary influences the concentration points of single blowing-up solutions of $(\mathcal{P}_{V,\e})$ with residual mass. More precisely, our aim is to prove Theorems \ref{t2} and \ref{t3}.
\subsection{The Normal Derivative of $V$ Is Positive on the Boundary}

The goal of this subsection is to prove Theorem \ref{t2}. More precisely, we assume that $ n \geq 6$,  $\frac{\partial V}{\partial\nu} >0$ on the boundary of $\Omega$ and our aim is to prove that any single blow-up solution $u_\e$ of  $(\mathcal{P}_{V,\e})$ with residual mass has to blow up in the interior of $\Omega$. Proceeding by contradiction, we write $u_\e$ in the form \eqref{KA4} (with $N=1$) such that \eqref{KA5} holds, and assume that $d:=d(a_\e,\partial\O) \to 0$ as $\e\to 0$. Lemma \ref{L41} implies that Propositions \ref{p2}, \ref{rp41}, \ref{rp42} and \ref{rp43} are valid. In addition, since $u_\e$ is a solution, it follows that the left-hand sides in Proposition \ref{rp41}-\ref{rp43} equal to zero. Therefore, we derive the following balancing conditions
\begin{align}
 & \| v_\e \|^2 \leq   c \Big(  \e^2 + \frac{1}{\lambda ^{ 4 } } + \frac{1}{(\l d )^{n + 2}} + (\mbox{if } n=6) \Big[ \frac{ \ln ( \l )^{4/3}}{\lambda ^{ 4 } } + \frac{ \ln^2 ( \l d )}{(\l d )^{ 8 }} \Big]   \Big) , \notag  \\
& |  1 - \a_0^{p-1-\e} | \leq  c \Big( \e +  \frac{1}{\l ^{ 4 }}  + \frac{1}{\l ^{ (n-2)/2 }}  +  \frac{1}{(\l d )^{n + 2}} + (\mbox{if } n=6) \frac{ \ln^2 ( \l d )}{(\l d )^{ 8 }} \Big) , \notag  \\
& | 1 - \a ^{p-1-\e} c_0^{-\e}\l ^{\frac{ - \e(n-2) }{2}} | \leq  c \Big( \e + \frac{ 1 }{ \l ^2 }+\frac{1}{(\l d )^{n-2}} \Big) , \notag \\
& - c_0 \ov{c}_2 \frac{n-2}{2} \frac{ H(a,a)}{ \l^{n-2}} - V(a)  \frac{ \g_n }{ \l^2} + \ov{c}_3 \, \e = o \Big( \e + \frac{1}{ \l^2} + \frac{1}{ ( \l d )^{n-2} } \Big) . \label{po12}
 \end{align}
In addition,  since $ a $ is close to $\partial \O$, there exists a unique $ \ov{a} \in \partial \O $ such that $ | a - \ov{ a } | = d := d(a, \partial \O)$. Multiplying the estimate given in Proposition \ref{pas-geq5} by the outward normal vector  $ \nu $ to $ \partial \O $ at the point $ \ov{ a} $, we obtain 
$$ \a \frac{ \rho_n}{ \l ^3 } \frac{ \partial V}{ \partial \nu } (a) + \a c_0 \ov{c}_2 \frac{1}{2} \frac{ 1 }{ \l ^{n-1}}  \frac{ \partial H}{ \partial \nu } (a, a ) - \a_0 p \frac{ \ov{c}_5}{ \l^{n/2}}  \frac{ \partial u_0}{ \partial \nu } (a) = o \Big( \frac{1}{ (\l d ) ^{ n-1}} + \frac{1}{\l ^ 3} \Big) + O ( \e^2) . $$
Using \eqref{H-bord}, \eqref{u0-nu} and  $\frac{\partial V}{\partial\nu} >0$, we derive that 
$$ \frac{1}{ \l^3} + \frac{1}{ (\l d )^{n-1}}  \leq c \e ^2 . $$ 
Putting this information in \eqref{po12}, we obtain $ \e = o( \e) $ which presents a contradiction. \\
This completes the proof of the theorem.

\subsection{The Normal Derivative of $V$ Is Negative on the Boundary}

The aim of this subsection is to prove Theorem \ref{t3}. Let $ n \geq 7 $ and  $b\in\partial\O$ be a non-degenerate critical point of the restriction of $V$ to the boundary satisfying $\frac{\partial V}{\partial\nu}(b)<0$. To construct a solution of the form $u_\e=\a_{0,\e}\,u_0 +\a_\e P\d_{a_\e,\l_\e} +v_\e:=\a_0\,u_0 +\a P\d_{a,\l}+v$ such that $a_\e\to b$ as  $\e \to 0$, we employ the same method as in the proof of Theorem \ref{t4} given in Section $5$. The approach reduces the problem to a finite-dimensional system after $v_\e$ is found via Proposition \ref{p2}. Hence, given $(\a_0, \a, \l, a)$ in the following set 
\begin{align}\label{b1}
\mathbb{A}(\e) =\lbrace  (\alpha_0, \alpha, \lambda, a) \in & \mathbb{R}_+\times \mathbb{R}_+ \times \mathbb{R}_+ \times \Omega : \vert\alpha_0-1\vert <  \,\varepsilon \,\ln^2 \varepsilon, \quad \vert\alpha -1\vert <  \,\varepsilon \,\ln^2 \varepsilon,\nonumber\\ 
& \frac{1}{c} < \lambda^2 \, \e < c,\, \vert a - b \vert <  \,\varepsilon^{1/5} \quad \frac{1}{c}\leq \frac{d(a,\partial\O)}{\e^{ ( n-4) / [2(n-1)] }}\leq c  \rbrace, 
\end{align}
where $c$ is a fixed positive constant, we need to solve the  system \eqref{5.11}-\eqref{5.14} with $N=1$.\\
Let $\bar{a} \in\partial \O$ such that $|a-\bar{a}|=d(a,\partial\O)$ and let $\nu_{\bar{a}}$ (resp $\nu_b$) be the unit outward normal vector at $\bar{a}$ (rep $b$). Without loss of generality, we can assume that $a=\bar{a}-d\nu_{\bar{a}}$, where $d=d(a,\partial\O)$. According to Lemma $7.5$ of \cite{BJMAA}, we know that there exists a smooth function $g: \, T_{b} ( \partial \O) \to \R$ such that, for each $x\in T_{b} ( \partial \O) \cap B(b, r_0)$ (with $r_0> 0$ small) we have $x - g(x)\nu_{b} \in \partial \O$,   $g(b) = \nabla g(b) = 0$, and $ D^2( V_{|\partial \O } )$. Recall that $ b $ is a critical point of $ V_{| \partial \O }$ which implies that 
\be \label{moh5} \nabla V(b) \cdot y = 0 \qquad \forall \, \, y \in T_{b}( \partial \O ) . \ee 
Furthermore,  $ D^2 V $ are related by the following formula:
\be \label{127**} D^2 (V_{ | \partial \O })(b) (\o_1, \o_2 ) = D^2 V (b) (\o_1, \o_2 ) - \frac{ \partial V}{ \partial \nu} (b) D^2 g (b) ( \o_1  , \o_2 ) . \ee 
Furthermore, we have 
\begin{itemize}
\item $\ov{a}$ can be written as 
$
\ov{a} := b + \ov{a}' + \mathcal{O}(| \ov{a} -b | ^2), $  with $ \ov{a}' \in T_{b} (\partial \O).$
\item For each $y \in T_{b} (\partial \O)$, it holds
 \be \label{moh2} \langle \nu_{\ov{a}}, y \rangle = D^2g(b) ( \ov{a}'-b, y ) +  \mathcal{O}(| \ov{a} -b | ^2 |y|). \ee 
\end{itemize} 
Observe that, Propositions \ref{rp43} and \ref{pas-geq5} can be written as 
\begin{align}
 & \Big\langle   I^{\prime}_\e(u),\l  \frac{\partial P\d }{\partial \l }\Big\rangle =   - \a \, V(a )  \frac{ \gamma_n }{\l ^{2}} + \bar{c}_3 \, \e \, \a +  O \Big(  \e^2 + \frac{ 1 }{\l^{4} } + \frac{ \ln \l }{ \l ^{n/2}}  + \frac{1}{ ( \l d )^{n-2}} + \frac{ d }{ \l ^{(n-2)/2}}  \Big) , \label{mo13} \\
&  \Big\langle I'_\e(u) ,\frac{1}{\l} \frac{\partial P\d}{\partial a}\Big\rangle  =  \a \,\rho_n \, \frac{\n V(a)}{\l^{3}}   -\frac{\a}{2}\,\frac{c_0\,\bar{c}_2}{\l^{n-1}}\frac{\partial H}{\partial a}(a,a)  + O \Big(\e ^2 +  \frac{ 1 }{\l^{\frac{n}{2}} } + \frac{ 1 }{\l^{4} }+ \frac{ d }{ ( \l d )^{n-1} }  +  \frac{ 1 }{ ( \l d )^{n} } \Big).  \label{mo14}
\end{align}
We notice that, since $ a $ is close to the boundary, we have (see Lemma 7.1 of \cite{BJMAA})
$$ \frac{\partial H}{\partial \nu_{\ov{a} } }(a,a) = \frac{ n-2 }{ (2 d )^{n-1} } + O \big( \frac{1}{ d^{n-2}} \big)  \qquad \mbox{ and } \qquad \frac{\partial H}{\partial a } (a,a) \cdot y =  O \big( \frac{ | y | }{ d^{n-2}} \big)  \, \, \forall \, y \perp \nu_{\ov{a}} . $$ 
Thus we get 
\begin{align}
&  \rho_n \, \frac{\partial V(a)}{\partial \nu_{ \ov{a} }}   - \frac{1}{2}\,\frac{c_0\,\bar{c}_2}{\l^{n-4}}\frac{\partial H}{\partial \nu_{\ov{a}}}(a,a)  =  \rho_n \, \frac{\partial V(b)}{\partial \nu_{ b }}  - \frac{n-2}{2^n}\,\frac{c_0\,\bar{c}_2}{\l^{n-4} d^{n-1} } + O \big( | a-b | +  \frac{1}{\l^{n-4} d^{n-2} } \big) , \label{mo11} \\
&  \rho_n \, \nabla V(a)\cdot \ov{y}   - \frac{1}{2}\,\frac{c_0\,\bar{c}_2}{\l^{n-4}}\frac{\partial H}{\partial a} (a,a) \cdot \ov{y}  =  \rho_n \, \nabla V(a)\cdot \ov{y} 
+ O \big( \frac{1}{\l^{n-4} d^{n-2} } \big) \qquad \forall \, \, \ov{y} \perp \nu_{\ov{a} } . \label{mo12}
 \end{align}
Now, taking the change of variables $(\b_0, \b, \Lambda, D,  \xi )$ defined by 
\begin{align*}
\beta_0 &= 1-\a_0^{p-1}, \quad \beta = 1-\a^{p-1},\\
\frac{1}{\l^2}&=\frac{\bar{c}_3}{\gamma_n}\,\frac{1}{V(b)}\,\e\,(1+\Lambda)\\
d^{n-1}&=   \frac{ n-2 }{2^n\rho_n}\,\frac{ c_0 \bar{c}_2}{|\frac{\partial V}{\partial\nu}(b)|}\Big[\frac{\bar{c}_3}{\gamma_n}\,\frac{1}{V(b)}\,\e\Big]^{(n-4)/2}\big(1+D\big)\\
a&=b+\xi -g(b+\xi)\nu_b-d\nu_{\bar{a}},
\end{align*}
and applying Propositions \ref{rp41}, \ref{rp42}, \eqref{mo13}-\eqref{mo12},  we see that the system \eqref{5.11}-\eqref{5.14} is equivalent to
$$ 
\begin{cases}
\b_0=O(\e)\\
\b=O(\e|\ln\e|)\\
\L = O \Big( | \xi | + \e ^{ \frac{ n-4 }{ 2 (n-1) } } \Big) , \\
\frac{n-4}{2}\Lambda  - D  = O\big( \e^{\frac{3}{2(n-1)}}  + \Lambda^2 +D^2 + | \xi | \big) , \\
D^2(V_{| \partial \O})(b)(\xi, . ) =O( \e^ {\frac{ 3 }{ 2(n-1)} } + | \xi |^2 ) \qquad \mbox{ on }\, \,   T_b(\partial\O).
\end{cases}
$$
In fact, the first and the second ones are immediate. Concerning the third one, using \eqref{mo13} and the change of variables, we obtain 
$$ - \frac{ \gamma_n}{\l^2 } V(a)  + \ov{c}_3 \e = O \Big( \e^2 + \e^{n/4} | \ln \e | + \e ^{ \frac{ 3(n-2)}{2(n-1)} } + \e^{ \frac{n-4}{2(n-1)} + \frac{n-2}{4} } \Big) $$ which implies that 
$$ \L = O \Big( | \xi | + \e ^{ \frac{ n-4 }{ 2 (n-1) } } \Big) . $$
Hence the third equation is proved. Regarding the fourth equation in the previous system, putting the change of variables in \eqref{mo11} and \eqref{mo13}, standard computations imply the fourth equation of the system. Finally, putting the change of variables in \eqref{mo12} and \eqref{mo14}, it yields  
\be \label{moh1} \nabla V(a)\cdot \ov{y}  = O \big( \e^{ \frac{3}{2(n-1)} } \big) \qquad  \, \, \forall \, \ov{ y } \perp \nu_{\ov{a}}  . \ee 
Now, let $ y \in T_b( \partial \O )$ and take $ \ov{y} := y - \langle y , \nu_{\ov{a} } \rangle \nu_{\ov{a} } $. It is easy to see that  $ \ov{y} \in T_{ \ov{ a } }( \partial \O ) $ and $ | \ov{y} | \leq | y | $. Thus, using \eqref{moh1} and \eqref{moh2}, we derive that 
\begin{align}  & \nabla V(a)\cdot {y}  = \nabla V(a)\cdot \ov{y} + \langle y , \nu_{\ov{a} } \rangle \nabla V(a) \cdot \nu_{\ov{a} } \notag \\
 & = \big[  D^2g(b) ( \xi , y ) +  \mathcal{O}(| \xi | ^2 |y|) \big]  \big( \nabla V(b) \cdot \nu_{\ov{a}} + D^2 V(b) ( a-b , \nu_{\ov{a}} ) + O( | a - b |^2 ) \big)  + O \big( \e^{ \frac{3}{2(n-1)} } | \ov{y}|  \big) \notag  \\
  & = D^2g(b) ( \xi , y )  \frac{ \partial V}{ \partial \nu }(b) +  \mathcal{O} \big(  [ | \xi | ^2  + d ^2  + \e^{ \frac{3}{2(n-1)} }  ] | y | \big) .  \label{moh4}
 \end{align}
Now, expanding the left hand side of \eqref{moh4} and using the fact that $ y \in T_b( \partial \O )$,  $ b $ is a critical point of $ V_{ | \partial \O} $ and \eqref{moh5}, we get
\begin{align}  \nabla V(a)\cdot {y} & =  \nabla V(b)\cdot {y} + D^2 V(b) ( a-b, y ) + O ( | a-b |^2 | y | ) \notag  \\
 & = D^2 V(b) ( \xi , y )   + O \big( \big[ | \xi |^2 + d \big] | y | \big) . \label{moh6}
 \end{align}
Combining \eqref{moh4}, \eqref{moh6} with \eqref{127**}, the last equation in the system is therefore proved. 

Finally, using the fact that $D^2V_{ | \partial \O} (b)$ is invertible and applying Brouwer's fixed point Theorem, we derive that the system has a solution for $\e$ small. This leads to the desired result. 

\section{Conclusion}
Through refine asymptotic expansions of the gradient of the Euler-Lagrange functional associated to $(\mathcal{P}_{V,\varepsilon})$, we showed that, in contrast to the case of weak convergence to zero, interior bubbling solutions with a nonzero weak limit cannot arise in low dimensions. Removing the assumption that blow-up points are confined to the interior, we proved that in dimensions $n=4$ and $n=5$, single blow-up points cannot coexist with residual mass. We also clarified the role of the sign of the normal derivative of the potential on the boundary: a positive sign forces any single blow-up solution with residual mass to blow up in the interior, whereas a negative sign at some boundary point allows for the construction of boundary blow-up solutions with residual mass. In addition, we constructed both simple and non-simple interior blow-up solutions with residual mass without any assumption on the sign of the normal derivative.
However, several promising directions for future research and open questions still remain:

\begin{itemize}
\item[(i)]
In this paper, we have established the impossibility of the coexistence of a single blow-up and a residual mass in dimensions $n=4$ and $n=5$. A natural extension of this work would be to investigate the case of multiple blow-up points in the presence of a residual mass for $3\leq n\leq 5$. The six-dimensional case also deserves attention, though it is expected to resemble the behavior observed in lower dimensions.
\item[(ii)]
We have also observed that a positive sign of the normal derivative of the potential on the boundary forces single blow-up solutions with residual mass to blow up in the interior. What about the case of multiple blow-up points with residual mass? Additionally, we have constructed single boundary blow-up points with residual mass when the sign of the normal derivative of the potential is negative. An interesting extension would be to examine the case of multiple boundary blow-up points with residual mass, particularly for clusters of bubbles.
\item[(iii)]
The present work considers a slightly subcritical exponent within the framework of Sobolev embedding. A natural direction for future research would be to extend the analysis to the slightly supercritical regime, that is, when $\varepsilon <0$ with $|\varepsilon|$ small.
\end{itemize}

\noindent{\bf Author Contributions:} 
 R.A., M. B.A.  and K.E.M.: conceptualization, methodology, investigation, writing original  draft, writing-review and editing. All authors have read and agreed to the published version of the manuscript.

\noindent{\bf Data Availability Statement:} Data are contained within the article.

\noindent {\bf  Use of Generative-AI tools declaration:} The authors declare that they have not used Artificial Intelligence (AI) tools in the creation of this article.

\noindent{\bf Acknowledgments:} 
The Researchers would like to thank the Deanship of Graduate Studies and Scientific Research at Qassim University for financial support (QU-APC-2026).

\noindent{\bf Conflicts of Interest:} The authors declare no conflict of interest.

\section{Appendix}
 In this appendix, we collect some useful estimates used throughout the paper. Throughout this appendix, we assume that $\l d(a, \partial\O)$ is sufficiently large.
For $(a,\l)\in \O\times (0,\infty)$, we set
$$
\theta_{(a,\l)}=\d_{(a,\l)}- P\d_{(a,\l)},
$$
and, for simplicity of notation, we will henceforth write $\theta$,  $\d$,  $P\d$,  in place of $\theta_{(a,\l)}$,  $\d_{(a,\l)}$, and $P\d_{(a,\l)}$  respectively.\\
Let $G$ denote the Green’s function of the Laplacian in $\O$ subject to Dirichlet boundary conditions, and let $H$ represent its regular part, that is:
\begin{align*}
&G(x,y)=|x-y|^{2-n}-H(x,y)\quad \mbox{in}\quad\O^2,\quad G=0\quad \mbox{ on }\quad\partial(\O^2),\quad \D_x H=0 \quad \mbox{in}\quad\O^2.
\end{align*}
\begin{lem}\label{A1}
The following estimates hold:
\begin{align*}
&0\leq P\d_{a,\l}\leq \d_{a,\l},\quad 0\leq \theta_{a,\l}\leq \d_{a,\l}, \quad  \Big| \l \frac{ \partial \theta_{a,\l}}{\partial\l} \Big| \leq c \theta_{a,\l} ,\quad \Big| \frac{1}{\l}\frac{ \partial  \theta_{a,\l}}{\partial a} \Big|  \leq c \frac{\theta_{a,\l}}{\l d} , \\
&\l\frac{\partial P\d_{a,\l}}{\partial\l}=O(P\d_{a,\l}),\quad \frac{1}{\l}\frac{\partial P\d_{a,\l}}{\partial a}=O(P\d_{a,\l}),\quad \frac{1}{\l}\frac{\partial P\d_{a,\l}}{\partial a}=O\big(\frac{\d_{a,\l}}{\l |x-a|}+\frac{\d_{a,\l}}{\l d}\big), \\
&\theta_{a,\l}(x)=c_0\frac{H(a,x)}{\l^{(n-2)/2}} + O\Big(\frac{1}{(\l d)^{(n+2)/2}d^n}\Big),\quad \|\theta_{a,\l}\|_{L^{p+1}}=O\Big(\frac{1}{(\l d)^{(n-2)/2}}\Big),\\
&\l\frac{\theta_{a,\l}}{\partial\l}(x) = -\frac{n-2}{2}\frac{c_0H(a,x)}{\l^{(n-2)/2}}+ O\Big(\frac{1}{(\l d)^{(n+2)/2}d^n}\Big),\quad \|\l\frac{\theta_{a,\l}}{\partial\l}\|_{L^{p+1}}=O\Big(\frac{1}{(\l d)^{(n-2)/2}}\Big),\\
\end{align*}
\begin{align*}
&\frac{1}{\l}\frac{\theta_{a,\l}}{\partial a}(x)=\frac{c_0}{\l^{n/2}}\frac{\partial H(a,x)}{\partial a}+ O\Big(\frac{1}{(\l d)^{(n+4)/2}d^{n+1}}\Big),\quad  \|\frac{1}{\l}\frac{\theta_{a,\l}}{\partial a}\|_{L^{p+1}}=O\Big(\frac{1}{(\l d)^{n/2}}\Big),
\end{align*}
where $c_0$ is the constant defined in \eqref{delta} and $d=d(a,\partial\O)$.
\end{lem}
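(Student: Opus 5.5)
The plan is to derive every estimate from the maximum principle, applied to harmonic functions and to projections, together with explicit bounds on $\d$ and its derivatives. We also use the standard bounds on $H$ near the boundary, namely $H(a,x)\le c\,d^{2-n}$ and $|\n_a H(a,x)|\le c\,d^{1-n}$.

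\textbf{First line.} Since $\D P\d=\D\d$ in $\O$, the function $\theta=\d-P\d$ is harmonic in $\O$ with boundary values $\theta=\d>0$ on $\partial\O$. Hence $\theta\ge 0$, which gives $P\d\le\d$. Because $-\D\d=\d^p\ge0$, the function $\d-\theta=P\d$ is superharmonic and vanishes on $\partial\O$, so $P\d\ge 0$ and therefore $\theta\le\d$.

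For $\l\partial_\l\theta$, this function is harmonic with boundary values $\l\partial_\l\d$. Since $|\l\partial_\l\d|\le c\,\d=c\,\theta$ on $\partial\O$, applying the maximum principle to $c\theta\pm\l\partial_\l\theta$ yields $|\l\partial_\l\theta|\le c\theta$.

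Similarly, $\l^{-1}\partial_a\theta$ is harmonic with boundary values $\l^{-1}\partial_a\d$. On $\partial\O$ we have $|x-a|\ge d$, so $|\l^{-1}\partial_a\d|\le c\,\d/(\l|x-a|)\le c\,\theta/(\l d)$ there. The same comparison with $c\,\theta/(\l d)$ gives the bound on $\l^{-1}\partial_a\theta$.

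\textbf{The claims $\l\partial_\l P\d=O(P\d)$ and $\l^{-1}\partial_aP\d=O(P\d)$.} These are proved directly, without passing through $\d$. Differentiating the equation $-\D P\d=\d^p$, $P\d=0$ on $\partial\O$, gives
$$-\D(\l\partial_\l P\d)=p\,\d^{p-1}\l\partial_\l\d,\qquad -\D(\l^{-1}\partial_a P\d)=p\,\d^{p-1}\l^{-1}\partial_a\d,$$
with both functions vanishing on $\partial\O$.

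Explicitly, $|\l\partial_\l\d|\le c\,\d$. For the $a$-derivative,
$$|\l^{-1}\partial_a\d|=c\,\d\,\frac{\l|x-a|}{1+\l^2|x-a|^2}\le c\,\d .$$
Hence both right-hand sides are bounded in absolute value by $C\d^p=-C\D P\d$.

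Consequently $w_\pm:=C P\d\pm\l\partial_\l P\d$ satisfies $-\D w_\pm\ge0$ in $\O$ and $w_\pm=0$ on $\partial\O$. The maximum principle gives $w_\pm\ge0$, that is $|\l\partial_\l P\d|\le C P\d$. The argument for $\l^{-1}\partial_aP\d$ is identical.

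The refined bound follows by writing $\l^{-1}\partial_aP\d=\l^{-1}\partial_a\d-\l^{-1}\partial_a\theta$. The first term is bounded by $c\,\d/(\l|x-a|)$, and the second is at most $c\,\theta/(\l d)\le c\,\d/(\l d)$ by the first step.

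\textbf{Expansions of $\theta$ and its derivatives.} We compare $\theta$ with the harmonic function $c_0\l^{(2-n)/2}H(a,\cdot)$, whose boundary values are $c_0\l^{(2-n)/2}|x-a|^{2-n}$. On $\partial\O$,
$$\d=c_0\l^{(2-n)/2}|x-a|^{2-n}\bigl(1+O((\l|x-a|)^{-2})\bigr).$$
So the difference is harmonic and bounded on $\partial\O$ by $c\,\l^{-(n+2)/2}d^{-n}$, which is the stated error.

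Differentiating these boundary values in $\l$ and in $a$, and applying the maximum principle again, gives the expansions of $\l\partial_\l\theta$ and $\l^{-1}\partial_a\theta$. The factor $-\tfrac{n-2}{2}$ comes from $\l\partial_\l\l^{(2-n)/2}$, and the factor $\l^{-n/2}\partial_aH$ comes from $\l^{-1}\partial_a$ applied to $\l^{(2-n)/2}H$.

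\textbf{$L^{p+1}$ norms.} These follow from the expansions. On $B(a,d/2)$ we use $H(a,x)\le c\,d^{2-n}$ and $|\n_aH|\le c\,d^{1-n}$. On the complement we use $0\le\theta\le\d$, or the derivative bounds, and estimate $\int_{\O\setminus B(a,d/2)}\d^{p+1}\le c(\l d)^{-n}$. For instance,
$$\|\theta\|_{L^{p+1}}\le c\,\l^{(2-n)/2}d^{2-n}\,d^{(n-2)/2}+c\,(\l d)^{-(n-2)/2},$$
and both terms are $O((\l d)^{-(n-2)/2})$.

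\textbf{Main obstacle.} The only delicate point is the pointwise comparison $|\l^{-1}\partial_a\d|\le C\d$ needed for the $O(P\d)$ claims. This holds because $\l|x-a|/(1+\l^2|x-a|^2)\le\tfrac12$; everything else is routine use of the maximum principle.
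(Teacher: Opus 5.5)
Your argument is correct and is essentially the paper's route. The paper proves the derivative bounds (estimates 3--7) by representing the relevant harmonic or projected functions through the nonnegative Poisson/Green kernel, which is exactly your maximum-principle comparison, and it cites Rey and Bahri for the expansions and $L^{p+1}$ bounds, whose standard proofs are the ones you sketch. One small precision: your boundary error $c\,\lambda^{-(n+2)/2}d^{-n}$ (and likewise $\lambda^{-(n+4)/2}d^{-(n+1)}$ for the $a$-derivative) is not literally the stated error $(\lambda d)^{-(n+2)/2}d^{-n}$, but a stronger bound that implies it because $d\le \mathrm{diam}\,\Omega$.
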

\begin{pf}
With the exception of Estimates $3-7$, the proofs of the remaining estimates can be found in \cite{B, BC, R-G}. We provide the proof of the fourth estimate. The others are proved similarly, so we omit their proofs. Since $ {\partial {\theta}_{a, \l} } / {\partial a_j} $ satisfies 
$$  \D  \Big( \frac{\partial {\theta}_{a, \l} }{\partial a_j}\Big) = 0 \, \,  \mbox{ in } \O  \quad \mbox{ and } \quad  
  \frac{\partial {\theta}_{a, \l} }{\partial a_j} =   \frac{\partial {\d}_{a, \l} }{\partial a_j}  \mbox{ on } \partial  \O,  $$
 for each $ x \in \O$,  it holds 
$$  \frac{1}{\l } \frac{\partial {\theta}_{a, \l} }{\partial a_j} (x) = - \int_{ \partial \O } \frac{ \partial G}{\partial \nu} (x,y) \frac{1}{\l } \frac{\partial {\d}_{a, \l} }{\partial a_j} (y) dy .$$
But, we know that 
$$  \Big| \frac{1}{\l } \frac{\partial {\d}_{a, \l} }{\partial a_j} (y) \Big| \leq \frac{ c }{ \l |y-a|  } \d_{a, \l }(y)  \quad \forall \, y \in \O\setminus \{a\} \qquad \mbox{ and } \qquad  \frac{ \partial G}{\partial \nu} (x,y) \leq 0 \quad \forall \, y \in \partial \O . $$
Thus, we obtain 
$$ \Big|  \frac{1}{\l } \frac{\partial {\theta}_{a, \l} }{\partial a_j} (x) \Big|  \leq  \frac{ c }{\l d }\,  \int_{ \partial \O }  -\frac{ \partial G}{\partial \nu} (x,y)   {\d}_{a, \l} (y) dy \leq  \frac{ c }{\l d } \theta _{a, \l} (x) . $$
Hence the proof is completed. 
\end{pf}


\end{document}